\documentclass[11pt]{amsart}
\usepackage{amssymb}
\usepackage[latin1]{inputenc}
\usepackage{pst-fun}
\usepackage{bm}

\parskip = 0.1in                                    

\newtheorem{lemma}{Lemma}[section]

\newtheorem{theorem}[lemma]{Theorem}
\newtheorem{remark}[lemma]{Remark}
\newtheorem{proposition}[lemma]{Proposition}
\newtheorem{corollary}[lemma]{Corollary}
\newtheorem{definition}[lemma]{Definition}
\newtheorem{example}[lemma]{Example}


\newcommand{\codim}{{\rm{codim\hspace{2pt}}}}

\newcommand{\e}{\varepsilon}



\newcommand{\bR}{{\mathbb R}}
\newcommand{\bC}{{\mathbb C}}


\begin{document}
\title{Fibrations structure and degree formulae for Milnor fibers}

\begin{abstract}
In this survey, we remind some fibrations structure theorems (also called Milnor's fibrations) recently proved in the real and complex case, in the local and global settings. We give several Poincaré-Hopf type formulae which relates the Euler-Poincaré characteristic of these fibers (also called Milnor's fibers) and indices (topological degree) of appropriated vector fields defined on spheres of radii small or big enough. 
\end{abstract}

\author{N. Dutertre, R. Ara\'ujo dos Santos, Y. Chen and A. Andrade}

\address{N. Dutertre: Aix-Marseille Universit\'e, CNRS, Centrale Marseille, I2M, UMR 7373,
13453 Marseille, France.}
\email{nicolas.dutertre@univ-amu.fr}

\address{R. N. Ara\'ujo dos Santos, Y. Chen and A. A. Espirito Santos: Departamento de Matem\'{a}tica, Instituto de Ci\^{e}ncias Matem\'{a}ticas e de Computa\c{c}\~{a}o, Universidade de S\~{a}o Paulo - Campus de S\~{a}o Carlos, Caixa Postal 668, 13560-970 S\~ao Carlos, SP, Brazil}

\email{rnonato@icmc.usp.br}
\email{yingchen@icmc.usp.br}
\email{andrade@icmc.usp.br}

\maketitle

\section{Introduction}

In this survey, we provide a brief review of some extensions of the real and complex Milnor fibration's theorems for isolated and non-isolated singularities and we give some Poincaré-Hopf type formulae which gives a connection between the Euler-Poincaré characteristic of the (regular) Milnor's fibers and the index of a gradient vector field. The paper is organized as follows. In Section 2, we first remind some fibration structure theorems which extend the very well known complex and real local Milnor's fibrations stated in \cite{Mi}. In the sequel, we remind Milnor's formula for complex isolated singularity (Poincaré-Hopf type formula) and we state several extensions of such formula which has been developed recently for functions and mappings with non-isolated singularities, in the complex and real case. 

In Section 3, we try to follow the line of development of Section 1, i.e., we also start introducing and showing some fibrations's Theorems, but now in the global case (at infinity), and in the sequel we show several Poincaré-Hopf type formulae in this setting.

\section{Fibration structure and degree formulae - Local setting}

\subsection{Milnor's fibrations structure}

For a germ of complex holomorphic function $f: (\bC^{n+1}, 0) \to (\bC,0)$, in \cite[Theorem 4.8, page. 43]{Mi} Milnor proved that there exists a small enough $\epsilon_{0}>0$ such that for all $0<\epsilon \leq \epsilon_{0}$ the projection

\begin{equation}\label{Milcomplex}
\frac{f}{\|f\|}: S^{2n+1}_\e \setminus \{f=0\} \to S^{1}
\end{equation}

\noindent is a smooth locally trivial fibration.

Later, it was proved by Lê D. Tr\'ang that the restriction map

$$f_{|}:\mathring{B}_{\epsilon}^{2n+2}\cap f^{-1}(S^{1}_{\delta})\to S^{1}_{\delta}$$

\noindent is also the projection of a locally trivial smooth fibration and that such a fibration is fiber-equivalent to the fibration (\ref{Milcomplex}), where $\mathring{B}_{\epsilon}^{2n+2}$ denotes the interior of the closed ball $B_{\epsilon}^{2n+2}.$ 

For a real analytic map germ $F:(\mathbb{R}^{n},0) \to (\mathbb{R}^{p},0)$, $n > p \geq 2$ with isolated singularity at origin, Milnor also proved in \cite[Theorem 11.2, page. 97]{Mi} that, for all $\epsilon >0$ small enough, there exists $0<\eta \ll \epsilon$ such that the restriction map
\begin{equation*}
\displaystyle{F_{|}: B_{\epsilon}^{n}\cap F^{-1}(S^{p-1}_{\eta})} \to S^{p-1}_{\eta}
\end{equation*}
\noindent is the projection of a locally trivial smooth fibration (called fibration in the ``Milnor tube"), with fiber 
$\mathcal{M}_{F}^{T}:=(F_{|})^{-1}(y).$ He also constructed a convenient vector field in $B_{\epsilon}^{n}\setminus V,$ where $V=F^{-1}(0)$, and used its flow to push the Milnor tube $B_{\epsilon}^{n}\cap F^{-1}(S^{p-1}_{\eta})$ to the sphere $S_{\epsilon}^{n-1},$ but keeping the boundary $S_{\epsilon}^{n-1}\cap F^{-1}(S^{p-1}_{\eta})$ fixed. Now, since $0$ is an isolated critical point of $F,$ it is not hard to see that is possible to extend this fibration to get a smooth locally trivial fibration  $$S_{\epsilon}^{n-1}\setminus {\rm Lk}^{0}(V) \to S^{p-1},$$
where ${\rm Lk}^{0}(V):=V\cap S_{\epsilon}^{n-1}$ is called the link of the zero set $V$ at origin.


As Milnor pointed out, in general we can not expect that the projection of this last fibration will be given by the canonical one 
$\frac{F}{\|F\|}.$ In \cite{AT} the authors provide a characterization of such a fibration in the more general case of non-isolated singularity (see also \cite{RA,Ar, CSS}) as follows.

\begin{theorem}

Let $F:(\mathbb{R}^{n},0) \to (\mathbb{R}^{p},0)$, $n > p \geq 2$ be an analytic mapping and $\Sigma_{F}$ its singular locus i.e., the set of points where the gradients $\nabla f_1,\ldots,\nabla f_p$ are linearly dependent. Assume that $\Sigma_{F}\cap V \subseteq \{0\}.$ The following two statements are equivalent:

\begin{enumerate}

\item for all $\epsilon >0$ small enough the projection $\frac{F}{\|F\|}: S_{\epsilon}^{n-1}\setminus {\rm Lk}^{0}(V) \to S^{p-1}$ is a smooth locally trivial fibration.

\item for all $\epsilon >0$ small enough the projection $\frac{F}{\|F\|}: S_{\epsilon}^{n-1}\setminus {\rm Lk}^{0}(V) \to S^{p-1}$ is a smooth submersion.

\end{enumerate} 
\end{theorem}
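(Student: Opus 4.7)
The implication (a)$\Rightarrow$(b) is immediate, since any smooth locally trivial fibration is in particular a submersion. The content of the statement lies in the converse (b)$\Rightarrow$(a), and that is what I would concentrate on.

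The natural tool would be Ehresmann's fibration theorem, but it cannot be invoked directly: although $S_\epsilon^{n-1}$ is compact, the punctured domain $S_\epsilon^{n-1}\setminus \mathrm{Lk}^0(V)$ is not, and $F/\|F\|$ is not a priori proper, because integral curves of an arbitrary lift of a vector field on $S^{p-1}$ might a priori escape to the link $\mathrm{Lk}^0(V)$ in finite time. So my plan is instead to build, for each $y_0\in S^{p-1}$, a local trivialization of $F/\|F\|$ over a small open neighborhood $W\subset S^{p-1}$ of $y_0$ by integrating carefully controlled lifts of tangent vector fields in the spirit of Ehresmann.

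The three steps I would carry out are the following. First, choose a smooth frame $v_1,\dots,v_{p-1}$ of $TS^{p-1}$ over a contractible chart $W\ni y_0$. Using the submersion hypothesis (b), locally lift each $v_i$ to a smooth vector field $\tilde v_i$ on $(F/\|F\|)^{-1}(W)$ by picking, at each point $x$, a preimage of $v_i(\pi(x))$ under $d\pi_x$ and patching via a smooth partition of unity, where $\pi:=F/\|F\|$. Second, and this is the crucial step, control the behaviour of $\tilde v_i$ near $\mathrm{Lk}^0(V)$ so that integral curves through points of $\pi^{-1}(y_0)$ exist long enough to cover the whole of $W$. Here the hypothesis $\Sigma_F\cap V\subseteq\{0\}$ is essential: since $0\notin S_\epsilon^{n-1}$, the map $F$ itself is a submersion on a neighborhood of $\mathrm{Lk}^0(V)$ in $S_\epsilon^{n-1}$, so one can arrange the lift to be tangent to level sets of $\|F\|$ (or an equivalent tubular function) near the link; the flow then preserves $\|F\|$ there, preventing integral curves from reaching $\mathrm{Lk}^0(V)=\{\|F\|=0\}$ in finite time. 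Third, once the flows are sufficiently complete, composing the flows of $\tilde v_1,\dots,\tilde v_{p-1}$ produces a fibre-preserving diffeomorphism $\pi^{-1}(y_0)\times W \to \pi^{-1}(W)$, that is, the desired local trivialization.

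The main obstacle is the second step: guaranteeing that the lifted frame has flows defined long enough to cross $W$ without escaping to $\mathrm{Lk}^0(V)$. Everything else is a routine Ehresmann-type construction. I would expect that the assumption $\Sigma_F\cap V\subseteq\{0\}$, together with the compactness of $S_\epsilon^{n-1}$ and the real-analyticity of $F$, is exactly what is needed to carry out this control, for instance by a \L{}ojasiewicz-type estimate comparing $\|F\|$ to the distance to $\mathrm{Lk}^0(V)$ on the sphere.
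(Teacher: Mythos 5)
Your division of labour is right: (a)$\Rightarrow$(b) is immediate, the whole content is (b)$\Rightarrow$(a), and the obstruction is exactly the one the paper points out, namely that $\frac{F}{\|F\|}$ is never proper when the link is nonempty. Note, however, that the paper does not reprove this local theorem (it is quoted from \cite{AT}); what it does prove in detail is the global analogue (the theorem on open books at infinity), and the route taken there is different from yours. There, one first shows that $F_{|}:F^{-1}(B^{p}_{\delta}\setminus\{0\})\cap S_{R}^{n-1}\to B^{p}_{\delta}\setminus\{0\}$ is a \emph{proper} surjective submersion (properness is automatic: preimages of compact subsets of the punctured disc stay away from the link), applies Ehresmann there, composes with the radial projection $y\mapsto y/\|y\|$ to fiber the tube over $S^{p-1}$, applies Ehresmann separately on the compact set $S_{R}^{n-1}\setminus F^{-1}(\mathring{B}^{p}_{\delta})$ where $\frac{F}{\|F\|}$ \emph{is} proper, and glues the two fibrations along $F^{-1}(S^{p-1}_{\delta})\cap S^{n-1}_{R}$. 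Your single controlled vector-field lift, patched by a partition of unity between a lift tangent to the levels of $\|F\|$ near the link and an arbitrary lift away from it, is essentially the vector-field incarnation of that gluing, and it does yield completeness of the flows: an integral curve starting at $\|F\|=c$ can never push $\|F\|$ below $\min(c,\delta/2)$, so it stays in a compact subset of $S^{n-1}_{\epsilon}\setminus{\rm Lk}^{0}(V)$.

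The one point where your argument as written has a genuine hole is the justification of Step 2. To lift a frame of $S^{p-1}$ to vector fields on the sphere that are in addition tangent to the levels of $\|F\|$, you need the pair $(\frac{F}{\|F\|},\|F\|)$ --- equivalently the restriction $F_{|S^{n-1}_{\epsilon}}$ --- to be a submersion on a neighbourhood of ${\rm Lk}^{0}(V)$ in $S^{n-1}_{\epsilon}$. The hypothesis $\Sigma_{F}\cap V\subseteq\{0\}$ together with $0\notin S^{n-1}_{\epsilon}$ only gives that $F$ is a submersion as a map on a neighbourhood of $V\setminus\{0\}$ in $\mathbb{R}^{n}$; to pass to the restriction you must also know that $\ker dF_{x}=T_{x}V$ is not contained in $T_{x}S^{n-1}_{\epsilon}$ at points of the link, i.e.\ that $V$ meets all sufficiently small spheres transversally. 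This is the local conic structure of the analytic set $V$ (curve selection lemma applied to the critical locus of $\rho_{|V\setminus\{0\}}$), and it is precisely the role played by Milnor's condition (b) in the local tube fibration and by Condition (A) in the paper's global theorem; it is not a consequence of a \L ojasiewicz comparison of $\|F\|$ with the distance to the link, which is what you gesture at. Once transversality of $V$ to $S^{n-1}_{\epsilon}$ is established, submersivity of $F_{|S^{n-1}_{\epsilon}}$ at the link follows by a dimension count, it propagates to a uniform collar $\{0<\|F\|<\delta\}\cap S^{n-1}_{\epsilon}$ by openness and compactness, and the rest of your construction goes through.
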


We remark that, this equivalence is not so trivial because if the link is not empty such a projection mapping is never proper.

The Milnor tube fibration for non-isolated singularity was established by D. Massey in \cite{Ma} as follows.

Let $F=(f_1,\ldots,f_p): (\mathbb{R}^n,0) \rightarrow (\mathbb{R}^p,0)$ be an analytic map, $2\leq p \leq n-1,$ and $\rho$ be the square of the distance function to the origin and denote by $M(F)$ the set of critical points of the pair $(F,\rho)$, i.e., the set of points where the gradients $\nabla \rho,\nabla f_1,\ldots,\nabla f_p$ are linearly dependent. This set is called the {\it Milnor set} and it also plays an important role in the study of asymptotical behavior of polynomial mappings at infinity, as the reader will see in the next section.

It follows by definition that $\Sigma_F\subseteq M(F).$

\begin{definition}\cite{Ma}\label{defMilnorCond}Given $F$ and $\rho $ as above.
\vspace{0.2cm}
\begin{enumerate}
\item We say that $F$ satisfies Milnor's condition (a) at the origin, if $\Sigma_F \subset V$ in a neighborhood of the origin.
\item We say that $F$ satisfies Milnor's condition (b) at the origin, if $0$ is isolated in $V \cap \overline{M(F)\setminus V}$ in a neighborhood of the origin, where the notation $\overline{X}$ means the topological closure of the space $X$.
\end{enumerate}
\end{definition}

\begin{remark}
It follows from Definition \ref{defMilnorCond} the equivalence:

The mapping $F$ satisfies Milnor's condition $(b)$ at the origin if and only if for each $\epsilon>0$ small enough, there exist $\delta>0$, $0<\delta \ll \epsilon $ such that the restriction map $F_|:S_{\epsilon}^{n-1}\cap F^{-1}({B^{p}_{\delta}\setminus \{0\}})\to B^{p}_{\delta}\setminus \{0\}$ is a smooth submersion (and onto, if the link of $F^{-1}(0)$ is not empty).
\end{remark}

We say that $\epsilon >0$ is a {\it Milnor radius for $F$ at origin}, provided that $B_{\epsilon}^{n}\cap {(\overline{\Sigma_{F}-V}})= \varnothing $, and $B_{\epsilon}^{n}\cap {V\cap(\overline{M(F)\setminus V})}\subseteq \{0\}$, where
$B_{\epsilon}^{n}$ denotes the closed ball in $\mathbb{R}^{n}$ with radius $\epsilon$.

Consequently, under Milnor's conditions $(a)$ and $(b)$, we can conclude that for all regular values close to the origin the respective fibers into the closed $\epsilon$-ball are smooth and transverse to the sphere $S^{n-1}_{\epsilon}.$

\begin{theorem}[\cite{Ma}, Theorem 4.3, page 284]\label{massey}
Let $F:(f_1,\ldots,f_p): (\mathbb{R}^n,0) \rightarrow (\mathbb{R}^p,0)$ satisfying Milnor's conditions (a) and (b) and $\epsilon_{0}>0$ be a Milnor's radius for $F$ at origin. Then, for each $0<\epsilon \leq \epsilon_{0}$, there exist $\delta,$ $0<\delta \ll \epsilon ,$ such that
\begin{equation}
F_{|}:B_{\epsilon}^{n}\cap F^{-1}(B^{p}_{\delta}\setminus \{0\})\to B^{p}_{\delta}\setminus \{0\}
\end{equation}
is the projection of a smooth locally trivial fiber bundle.

\end{theorem}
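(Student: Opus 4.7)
I would attack this with the standard Ehresmann-style argument for proper submersions between manifolds with boundary. Fix $\epsilon\in(0,\epsilon_{0}]$. The task is to find $\delta$, $0<\delta\ll\epsilon$, for which the restricted map
$$F_{|}:B_{\epsilon}^{n}\cap F^{-1}(B^{p}_{\delta}\setminus\{0\})\to B^{p}_{\delta}\setminus\{0\}$$
is (i) proper, (ii) a submersion at every interior point, and (iii) a submersion when restricted to the boundary $S^{n-1}_{\epsilon}\cap F^{-1}(B^{p}_{\delta}\setminus\{0\})$. Properness is free: the source is a closed subset of the compact ball $B_{\epsilon}^{n}$, so the preimage of a compact set in $B^{p}_{\delta}\setminus\{0\}$ is compact. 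The interior submersion (ii) is immediate from Milnor's condition (a) and the definition of Milnor radius: any $x$ in the open ball with $F(x)\neq 0$ satisfies $x\notin V$, hence $x\notin\Sigma_{F}$, so $\nabla f_{1}(x),\ldots,\nabla f_{p}(x)$ are linearly independent.

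\textbf{Main obstacle.} The delicate point is (iii), which is exactly where Milnor's condition (b) must be invoked and where $\delta$ has to be chosen. I would argue by contradiction. Suppose no $\delta>0$ works, so for each $k\in\bN$ there exists $x_{k}\in S^{n-1}_{\epsilon}\cap M(F)$ with $F(x_{k})\neq 0$ and $\|F(x_{k})\|<1/k$. By compactness of $S^{n-1}_{\epsilon}$ we may assume $x_{k}\to x_{\ast}\in S^{n-1}_{\epsilon}\subset B_{\epsilon}^{n}$. Continuity of $F$ yields $F(x_{\ast})=0$, so $x_{\ast}\in V$. Since $F(x_{k})\neq 0$ forces $x_{k}\in M(F)\setminus V$, we also have $x_{\ast}\in\overline{M(F)\setminus V}$. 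Thus $x_{\ast}\in B_{\epsilon}^{n}\cap V\cap\overline{M(F)\setminus V}$, and the Milnor-radius hypothesis compels $x_{\ast}=0$, contradicting $\|x_{\ast}\|=\epsilon>0$. This contradiction furnishes the required $\delta$, and (iii) follows.

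With (i), (ii) and (iii) established, I would invoke Ehresmann's fibration theorem for manifolds with boundary: a proper submersion whose restriction to the boundary is also a submersion is a smooth locally trivial fiber bundle over each connected component of its image. The last loose end is to check that $F_{|}$ surjects onto $B^{p}_{\delta}\setminus\{0\}$ (possibly after shrinking $\delta$), which follows from the openness of submersions together with $F(0)=0$ so that $F(\mathring{B}^{n}_{\epsilon})$ contains a neighborhood of the origin. The whole argument is essentially a transversality statement: conditions (a) and (b) together guarantee that, for $\delta$ small, each fiber $F^{-1}(y)\cap B_{\epsilon}^{n}$ with $0<\|y\|<\delta$ is smooth in the interior and meets $S^{n-1}_{\epsilon}$ transversally. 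The contradiction argument of the previous paragraph is the only non-routine step.
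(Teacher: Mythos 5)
Your argument is correct and follows essentially the same route as the paper's: interior submersivity from condition (a) together with the Milnor radius, boundary submersivity from condition (b), properness from compactness of the ball, and then Ehresmann for manifolds with boundary. The only difference is cosmetic --- the paper outsources your step (iii) to the remark following Definition~\ref{defMilnorCond}, whereas you prove it directly by the compactness/contradiction argument, which is exactly the content of that remark.
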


\proof (Idea)
Since $\epsilon_{0}>0$ is a Milnor's radius for $F$ at origin, we have that $\Sigma_{F}\cap B_{\epsilon_{0}}^{n} \subset V\cap B_{\epsilon_{0}}^{n}$.
It means that, for all $0<\epsilon \leq \epsilon_{0}$ the map $F_|: \mathring{B}_{\epsilon }^{n}\setminus V \to \mathbb{R}^{p}$ is a smooth submersion in the open ball $\mathring{B}_{\epsilon }^{n}$.

From the Milnor condition $(b)$, and the remark above, it follows that: for each $\epsilon$ there exists $\delta $, $0<\delta \ll \epsilon $, such that
\begin{equation*}
 \displaystyle{ F_|: S_{\epsilon}^{n-1} \cap F^{-1}(B_{\delta}^{p}-\{0\}) \to B_{\delta}^{p}-\{0\}}
\end{equation*}
is a submersion on the boundary $\displaystyle{S_{\epsilon}^{n-1}}$ of the closed ball $\displaystyle{ B_{\epsilon}^{n}}$.

Now, combining these two conditions we have that, for each
$\epsilon $, we can choose $\delta $ such that
\begin{equation*}
 F_|: B_{\epsilon}^{n}\cap F^{-1}(B_{\delta}^{p}-\{0\}) \to  B_{\delta}^{p}-\{0\}
\end{equation*}
is a proper smooth submersion. Applying the Ehresmann Fibration Theorem for the manifold with boundary $B_{\epsilon}^{n}$, we get that it is a smooth locally trivial fibration. \endproof

\begin{corollary} (Milnor's fibration in the tube)\label{Miltube}
Let $F:(f_1,\ldots,f_p): (\mathbb{R}^n,0) \rightarrow (\mathbb{R}^p,0)$ satisfying Milnor's conditions (a) and (b) and $\epsilon_{0}>0$ be a Milnor's radius for $F$ at origin. Then, for each $0<\epsilon \leq \epsilon_{0}$, there exists $\delta,$ $0<\delta \ll \epsilon ,$ such that
\begin{equation}
F_{|}:B_{\epsilon}^{n}\cap F^{-1}(S^{p-1}_{\delta})\to S^{p-1}_{\delta} \label{Milnortube}
\end{equation}
is the projection of a smooth locally trivial fiber bundle.
\end{corollary}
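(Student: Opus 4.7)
The plan is to deduce this corollary as an immediate restriction of Theorem \ref{massey}. Given $0<\epsilon\leq\epsilon_0$, I would first invoke Theorem \ref{massey} to obtain $\delta > 0$, $0<\delta\ll\epsilon$, such that
\[
F_{|}:B_{\epsilon}^{n}\cap F^{-1}(B^{p}_{\delta}\setminus \{0\})\to B^{p}_{\delta}\setminus \{0\}
\]
is a smooth locally trivial fiber bundle. Then I would observe that $S^{p-1}_{\delta}\subset B^{p}_{\delta}\setminus \{0\}$ is a smooth closed submanifold of the base of this bundle.

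The next step is the standard fact that the pullback of a locally trivial fibration along the inclusion of a submanifold of the base is again locally trivial. Concretely, if $\{U_\alpha\}$ is a trivializing open cover of $B^{p}_{\delta}\setminus \{0\}$ with trivializations $F^{-1}(U_\alpha)\cap B_\epsilon^n \cong U_\alpha\times \mathcal{M}$, then $\{U_\alpha\cap S^{p-1}_\delta\}$ is an open cover of $S^{p-1}_\delta$ and the restricted trivializations $F^{-1}(U_\alpha\cap S^{p-1}_\delta)\cap B_\epsilon^n \cong (U_\alpha\cap S^{p-1}_\delta)\times \mathcal{M}$ exhibit
\[
F_{|}:B_{\epsilon}^{n}\cap F^{-1}(S^{p-1}_{\delta})\to S^{p-1}_{\delta}
\]
as a smooth locally trivial fiber bundle.

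Alternatively, one can re-run the Ehresmann argument directly: Milnor's condition (a) gives that $F$ is a submersion on $\mathring{B}^n_\epsilon\setminus V$ (so in particular on $\mathring{B}^n_\epsilon\cap F^{-1}(S^{p-1}_\delta)$ once $\delta$ is chosen so small that this preimage avoids $V$), while Milnor's condition (b) ensures, for a suitably small $\delta$, that $F$ restricted to $S^{n-1}_\epsilon\cap F^{-1}(S^{p-1}_\delta)$ is also a submersion onto $S^{p-1}_\delta$. Since $B_\epsilon^n\cap F^{-1}(S^{p-1}_\delta)$ is compact (closed in the compact ball $B_\epsilon^n$), the map is a proper submersion of manifolds with boundary, and Ehresmann's fibration theorem for manifolds with boundary applies.

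There is no substantive obstacle: all the analytic work (choice of $\delta$, verification of the submersion conditions on interior and boundary, properness) has already been done in the proof of Theorem \ref{massey}. The only point that deserves care is confirming that the sphere $S^{p-1}_\delta$ lies inside the open base $B^{p}_{\delta}\setminus \{0\}$ on which the fibration of Theorem \ref{massey} is defined, which is immediate.
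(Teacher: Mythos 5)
Your proposal is correct and matches the paper's (implicit) reasoning: the paper states this corollary without a separate proof, treating it as an immediate consequence of Theorem \ref{massey}, exactly as you do by restricting the locally trivial fibration over $B^{p}_{\delta}\setminus\{0\}$ to the subspace $S^{p-1}_{\delta}$ of the base. Both your restriction argument and your alternative direct Ehresmann argument are sound.
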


\begin{example}\label{R1}
Let $f:(\mathbb{C}^{n},0)\to (\mathbb{C},0)$ be a holomorphic function germ, then it satisfies the Milnor conditions $(a)$ and $(b).$

In fact, Milnor's condition $(a)$ and $(b)$ can be seen as an application of \L ojasiewicz's inequality which states that, in a small neighborhood of the origin, there are constants $C>0$ and $0<\theta<1$ such that
$$|f(x)|^{\theta}\leq C\|\nabla f(x)\|.$$
So, Milnor's condition $(a)$ follows. In \cite{HL}, page 323, Hamm and Lê proved that the \L ojasiewicz inequality implies Thom $a_{f}$-condition for a Whitney $(a)$ stratification of $V.$ Therefore, Milnor's condition $(b)$ follows.
\end{example}

\begin{example}
Let $F:(\mathbb{R}^{n},0)\to (\mathbb{R}^{p},0)$ be an analytic map-germ with an isolated singular point at origin. Then, Milnor's conditions $(a)$ and $(b)$ above hold. In fact, Milnor's condition $(b)$ follows since the zero locus is transversal to all small spheres.
\end{example}

In \cite{AT, ACT} the authors considered the following definition.

\begin{definition}\label{d:booksinginf} \cite{ACT}.
We say that the pair $(K, \theta)$ is a \textit{higher open book structure with singular binding}  on an analytic manifold $M$ of dimension $m-1 \ge p \ge 2$ if $K\subset M$ is a singular real subvariety of codimension $p$ and $\theta : M\setminus K \to S^{p-1}_1$ is a locally trivial smooth fibration such that $K$ admits a neighbourhood $N$ for which the restriction $\theta_{|N\setminus K}$ is the composition $N\setminus K  \stackrel{h}{\to} B^p \setminus \{ 0\} \stackrel{\pi}{\to} S^{p-1}_1$ where $h$ is a locally trivial fibration and $\pi (s,y)=s/\|s\|.$

In such a case, one says that the \textit{singular fibered subvariety} $K$ is the \textit{binding} and that the (closures of) the fibers of $\theta$ are the \textit{pages} of the \textit{open book}.
\end{definition}

Let us denote by $M(\frac{F}{\|F\|})$ the singular locus of the pair $(\frac{F}{\|F\|},\rho),$ where $\rho(x)=x_{1}^{2}+\cdots +x_{n}^{2}$ is the square of distance function to the origin. Inspired by Milnor's conditions in \cite{Ma}, in the paper \cite{ACT} the authors proved some theorems about fibration structures on spheres of radii small enough for non-isolated singularity, as follows. 

\begin{theorem}[\cite{ACT}, Theorem 1.3, page 819]
Let $F:(\mathbb{R}^{n},0)\to (\mathbb{R}^{p},0)$ be a real analytic mapping germ. Assume that Milnor's condition $(b)$ holds and $\codim_{\mathbb{R}}V=p.$ If for all $\epsilon>0$ small enough the set $M(\frac{F}{\|F\|})$ is empty, then the pair $({\rm Lk}^{0}(V),\frac{F}{\|F\|})$ is a higher open book structure on the spheres $S_{\epsilon}^{n-1},$ with singular binding. In particular, the projection 

\begin{equation}\label{fibspheres}
\frac{F}{\|F\|}: S_{\epsilon}^{n-1}\setminus {\rm Lk}^{0}(V) \to S^{p-1}
\end{equation}

\noindent is a smooth locally trivial fibration. 
\end{theorem}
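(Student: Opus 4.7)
The plan is to transport the Milnor tube fibration from Corollary \ref{Miltube} onto the sphere $S_\epsilon^{n-1}$ by integrating a smooth vector field whose existence is supplied by the hypothesis $M(F/\|F\|)=\emptyset$. The assumptions $\codim_{\mathbb{R}}V=p$ and $M(F/\|F\|)=\emptyset$ together force $F$ to have maximal rank on $\mathring{B}_\epsilon^n\setminus V$, so Milnor's condition $(a)$ holds and Corollary \ref{Miltube} yields a smooth locally trivial fibration $F_{|}: B_\epsilon^n\cap F^{-1}(S^{p-1}_\delta)\to S^{p-1}_\delta$ for some suitable $\delta\ll\epsilon$. Composing with the diffeomorphism $\pi(y)=y/\|y\|$ then gives a locally trivial fibration $F/\|F\|: B_\epsilon^n\cap F^{-1}(S^{p-1}_\delta)\to S^{p-1}$, which will serve as the ``tube model'' of the fibration to be established on the sphere.

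Next I would use $M(F/\|F\|)=\emptyset$ to construct a smooth vector field $w$ on $\mathring{B}_\epsilon^n\setminus V$ satisfying $d(F/\|F\|)(w)\equiv 0$ and $\langle w,\nabla\rho\rangle>0$. The emptiness of $M(F/\|F\|)$ is precisely the statement that, at each point $x$, the kernel of $d(F/\|F\|)$ is not contained in $T_x S^{n-1}_{\|x\|}$, so it always contains a direction with strictly positive radial component; a standard partition-of-unity argument patches these pointwise choices into a smooth global $w$. Along the integral curves of $w$, the map $F/\|F\|$ is constant while $\rho$ strictly increases; by modifying $w$ if necessary so that trajectories starting on the Milnor tube remain bounded away from $V$ (using Milnor's condition $(b)$ to control the geometry near $V$), each such trajectory reaches $S_\epsilon^{n-1}$ in finite time. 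The resulting time-to-boundary map defines a smooth diffeomorphism $\Phi: F^{-1}(S^{p-1}_\delta)\cap \mathring{B}_\epsilon^n \to S_\epsilon^{n-1}\setminus N$, where $N$ is a small open neighbourhood of ${\rm Lk}^0(V)$ in $S_\epsilon^{n-1}$, and $\Phi$ intertwines the two $F/\|F\|$-projections. Hence the tube fibration transports to a locally trivial fibration on $S_\epsilon^{n-1}\setminus N$.

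To cover the collar $N\setminus {\rm Lk}^0(V)$, I would iterate the construction with Milnor tubes of smaller radii $\delta'\in(0,\delta)$: every point $x\in S_\epsilon^{n-1}\setminus {\rm Lk}^0(V)$ satisfies $\|F(x)\|>0$, and choosing $\delta'<\|F(x)\|$ places $x$ in the image of the flow from $F^{-1}(S^{p-1}_{\delta'})\cap \mathring{B}_\epsilon^n$; the local trivialisations glue consistently across overlapping ranges of $\delta'$ and yield a locally trivial fibration $F/\|F\|: S_\epsilon^{n-1}\setminus {\rm Lk}^0(V)\to S^{p-1}$. For the higher open book structure of Definition \ref{d:booksinginf}, on the neighbourhood $N$ of $K={\rm Lk}^0(V)$ I would take $h: N\setminus K\to B^p\setminus\{0\}$ to be the map induced by $F$ via the same flow-transport, so that $\pi\circ h=F/\|F\|$ on $N\setminus K$; this $h$ is a locally trivial fibration by a fibrewise application of Corollary \ref{Miltube} over $B^p\setminus\{0\}$.

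The main technical obstacle is the construction and global control of the vector field $w$: one must patch local choices smoothly while preserving strict positivity of the radial component, ensure completeness of the flow, and verify that trajectories reach $S_\epsilon^{n-1}$ without approaching $V$. This last point is the most delicate and is where Milnor's condition $(b)$ is essential; together with the transversality of $F/\|F\|$-fibres with all small spheres (i.e., $M(F/\|F\|)=\emptyset$), it supplies the geometric control needed to exclude pathological trajectory behaviour near $V$ and to produce a smooth time-to-sphere map with the required diffeomorphism properties.
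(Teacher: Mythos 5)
Your strategy of inflating the Milnor tube onto the sphere runs into two genuine gaps. First, you invoke Corollary \ref{Miltube}, which requires Milnor's condition $(a)$, whereas the theorem only assumes condition $(b)$; your claim that $\codim_{\mathbb{R}}V=p$ together with $M(\frac{F}{\|F\|})=\varnothing$ forces $F$ to have maximal rank off $V$ is unjustified. Emptiness of $M(\frac{F}{\|F\|})$ only says that the pair $(\frac{F}{\|F\|},\rho)$ has rank $p$ off $V$; a point $x\notin V$ where $dF_x$ has rank $p-1$ with image a hyperplane not containing $F(x)$ can still satisfy this, so $\Sigma_F\subset V$ does not follow and the tube fibration in the ball is not available to you. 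Second, even granting the tube fibration, the transport by a vector field $w$ tangent to the fibres of $\frac{F}{\|F\|}$ with positive radial component is precisely the delicate point that in the literature requires an extra hypothesis ($d$-regularity in \cite{CSS}, or conditions $(a)$ and $(b)$ together with $M(\frac{F}{\|F\|})=\varnothing$ in \cite{DAAC}): nothing in condition $(b)$ prevents an integral curve of $w$ from drifting towards $V$, where $\frac{F}{\|F\|}$ degenerates, instead of reaching $S^{n-1}_{\epsilon}$, and your ``iterate over smaller $\delta'$ and glue'' step is asserted rather than proved. If completed, your argument would in fact establish the stronger statement that the tube and sphere fibrations are equivalent, which is more than the theorem claims and is not known to follow from its hypotheses alone.

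The intended proof (carried out in this survey for the global analogue, Theorem \ref{t:sing}) stays entirely on the sphere and needs no vector field: condition $(b)$ makes $F_{|}:S^{n-1}_{\epsilon}\cap F^{-1}(B^{p}_{\delta}\setminus\{0\})\to B^{p}_{\delta}\setminus\{0\}$ a proper surjective submersion, hence a locally trivial fibration by Ehresmann's theorem; composing with $y\mapsto y/\|y\|$ fibres the region $\{0<\|F\|\le\delta\}$ of the sphere and provides the map $h$ required by Definition \ref{d:booksinginf}. On the complementary region $S^{n-1}_{\epsilon}\setminus F^{-1}(\mathring{B}^{p}_{\delta})$ the map $\frac{F}{\|F\|}$ is proper and, by $M(\frac{F}{\|F\|})=\varnothing$, a submersion, hence again a locally trivial fibration; the two fibrations are then glued along the common boundary $S^{n-1}_{\epsilon}\cap F^{-1}(S^{p-1}_{\delta})$. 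You should reorganise your argument along these lines rather than attempting to transport the tube fibration.
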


It seems natural to ask the following question: how does the fibers of fibrations (\ref{Milnortube}) and (\ref{fibspheres}) relates with each other, once both fibrations exist ?

Denote by $\mathcal{M}_{F}^{T}$ and $\mathcal{M}_{\frac{F}{\|F\|}}^{S}$ the fibers of fibrations (\ref{Milnortube}) and (\ref{fibspheres}), respectively. In \cite{CSS} the authors considered a condition called $d$-regularity and assuming also that $F$ satisfies a Thom $a_{F}$-condition, they showed that both fibrations are fiber-equivalent. We should point out that, it is pretty clear that Thom $a_{F}$-condition along $V$ imply Milnor's condition (b). But, the converse in not true as the reader can check in \cite{ACT}, section 5.3, page 827, in an example provided by A. Parusinsky. 

More recently, just assuming Milnor's condition (b) and using a quite different machinery, the authors in \cite{DAAC} proved a similar result to the one stated in \cite{CSS}. This result was proved first by M. Oka in \cite{Ok} for non-degenerate and convenient mixed polynomial germs.

\begin{proposition}[\cite{DAAC}, section 5.0.1, Proposition 5.2] Let $F=(f_1,\ldots,f_p): (\mathbb{R}^n,0) \rightarrow (\mathbb{R}^p,0)$ be a real analytic map germ satisfying Milnor's conditions $(a)$ and $(b)$. If  $M(\frac{F}{\|F\|})$ is empty (as a germ of set), then
the two fibers $\mathcal{M}_{F}^{T}$ and $\mathcal{M}_{\frac{F}{\|F\|}}^{S}$ are homotopy equivalent. 
\end{proposition}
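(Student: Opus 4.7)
The strategy is to connect the two fibers through a common intermediate object --- a ``half-cone fiber'' over the ray through $y$ --- using Massey's tube fibration on one side and a $d$-regular vector field on the other. Fix Milnor radii $\epsilon$ and $\delta \ll \epsilon$ as in Corollary \ref{Miltube}, shrunk further so that $M(F/\|F\|)$ is empty throughout $(B_\epsilon^n \setminus V)\setminus\{0\}$, and set $v := y/\|y\| \in S^{p-1}$, $L_v := \{tv : 0 < t < \delta\}$, and the cone
$$C := (F/\|F\|)^{-1}(v) \cap B_\epsilon^n \cap F^{-1}(L_v).$$
The plan is to establish both $\mathcal{M}_F^T \simeq C$ and $C \simeq \mathcal{M}_{F/\|F\|}^S$, from which the conclusion follows by transitivity.

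For $\mathcal{M}_F^T \simeq C$, I would restrict Massey's fibration (Theorem \ref{massey}) to the ray $L_v$: the map $F|_C : C \to L_v$ is a locally trivial fibration with typical fiber $\mathcal{M}_F^T$, and because $L_v \cong (0,\delta)$ is contractible, the inclusion $\mathcal{M}_F^T \hookrightarrow C$ is a homotopy equivalence. For $C \simeq \mathcal{M}_{F/\|F\|}^S$, I would use the hypothesis $M(F/\|F\|) = \emptyset$ --- which means that on $(\mathring{B}_\epsilon^n \setminus V)\setminus\{0\}$ the covector $d\rho$ fails to lie in the span of the $d(f_i/\|F\|)$ --- to construct a smooth vector field $w$ satisfying $d(F/\|F\|)\cdot w \equiv 0$ and $d\rho\cdot w > 0$; concretely, $w$ can be taken as the orthogonal projection of $\nabla\rho$ onto $\ker d(F/\|F\|)$, which is non-vanishing precisely by that independence. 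The flow of $w$, after reparametrization so that trajectories reach the boundary sphere in unit time, strictly raises $\rho$ along each fiber of $F/\|F\|$ until it reaches $S_\epsilon^{n-1}$, thereby deformation-retracting $(F/\|F\|)^{-1}(v)\cap B_\epsilon^n$ onto its sphere trace $\mathcal{M}_{F/\|F\|}^S$.

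The main technical obstacle is reconciling the bounded cone $C$ (which carries the extra constraint $\|F\| < \delta$) with the full sphere fiber $\mathcal{M}_{F/\|F\|}^S$ (which places no bound on $\|F\|$): a trajectory of $w$ starting inside $C$ may cross the face $\|F\| = \delta$ before striking $S_\epsilon^{n-1}$, so the naive flow does not immediately give a retraction $C \to \mathcal{M}_{F/\|F\|}^S$. I expect to resolve this by passing through the enlarged cone $C' := (F/\|F\|)^{-1}(v) \cap B_\epsilon^n$: one shows $C'$ retracts onto $\mathcal{M}_{F/\|F\|}^S$ via the flow of $w$, and then shows the inclusion $C \hookrightarrow C'$ is itself a homotopy equivalence via a companion fibration in which $\|F\|$ plays the role of base parameter. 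This latter step uses once more that $M(F/\|F\|) = \emptyset$ together with Milnor's condition $(a)$ to guarantee $\|F\|$ is a submersion on $C'$ away from $V$, so that its non-empty level sets inside $C'$ are all mutually homotopic and in particular match those over $L_v$.
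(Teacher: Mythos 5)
The survey does not actually prove this proposition: it states it with a pointer to \cite{DAAC}, Section 5.0.1, so the comparison below is with the method of \cite{ACT}/\cite{ACT1}/\cite{DAAC} as it surfaces in the proof of Theorem \ref{t:sing} of this paper. Your first step is correct and is indeed the expected one: restricting the fibration of Theorem \ref{massey} over the contractible ray $L_v$ trivializes it, so $\mathcal{M}_{F}^{T}\hookrightarrow C$ is a homotopy equivalence; likewise the vector field $w$, the orthogonal projection of $\nabla\rho$ onto $\ker d(\frac{F}{\|F\|})$, is smooth, non-vanishing and satisfies $d\rho(w)=\|w\|^2>0$ exactly because $M(\frac{F}{\|F\|})=\emptyset$.

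The gap is in the second half, and it is precisely the issue the survey flags after the theorem of \cite{AT}: near $V$ nothing is proper. First, $C'=(\frac{F}{\|F\|})^{-1}(v)\cap B_\epsilon^n$ is not compact (its closure meets $V$ and the origin), and emptiness of $M(\frac{F}{\|F\|})$ gives $w\neq 0$ but no positive lower bound for $\|w\|$ near $V$; after reparametrizing so that $\rho$ grows at unit speed, an integral curve may have infinite length and escape towards $V$ before $\rho$ reaches $\epsilon^2$, so the flow of $w$ does not, without further argument, retract $C'$ onto $\mathcal{M}_{\frac{F}{\|F\|}}^{S}$. Second, the repair you propose for $C\hookrightarrow C'$ does not work: $\|F\|$ restricted to $C'$ is a submersion (by condition $(a)$) but is not proper, hence not a fibration, and its level sets $F^{-1}(tv)\cap B_\epsilon^n$ are mutually homotopy equivalent only for $t<\delta$; for larger $t$, condition $(b)$ no longer guarantees that $F^{-1}(tv)$ avoids $M(F)$ on $S_\epsilon^{n-1}$, i.e. transversality to the boundary sphere can fail and the topology of the level sets inside the closed ball can change, so the assertion that all non-empty level sets in $C'$ are mutually homotopic is false in general. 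The way the cited proofs get around this is to split the sphere fiber at $\|F\|=\delta$: the collar $\mathcal{M}_{\frac{F}{\|F\|}}^{S}\cap\{\|F\|\le\delta\}$ is controlled by the proper fibration $F_{|}\colon S_\epsilon^{n-1}\cap F^{-1}(B_\delta^p\setminus\{0\})\to B_\delta^p\setminus\{0\}$ furnished by condition $(b)$ (restricted over the ray it retracts onto $F^{-1}(\delta v)\cap S_\epsilon^{n-1}$), while the remaining piece $(\frac{F}{\|F\|})^{-1}(v)\cap B_\epsilon^n\cap\{\|F\|\ge\delta\}$ is compact, so that vector-field arguments of the kind you propose become legitimate there. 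You should rebuild your last two steps around that compact piece rather than around $C'$.
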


At this point, we would like to invite the interested reader to look at section 5.0.1 of \cite{DAAC} for further details.

\subsection{A degree formula in the complex case}

Let $f:(\mathbb{C}^{n+1},0)\to (\mathbb{C},0)$ be a germ of holomorphic function. In the case that $0\in \mathbb{C}^{n+1}$ is an isolated critical point of $f,$ Milnor considered the topological degree, denote by ${\rm deg}_{0} \nabla f,$ of the normalized gradient vector field
\begin{equation}
\epsilon \frac{\nabla f}{\| \nabla f \|} :S_\epsilon ^{2n+1}  \rightarrow S_{\epsilon}^{2n+1},
\end{equation}

\noindent where $\nabla f = (\frac{\partial f}{\partial z_{0}}, \cdots, \frac{\partial f}{\partial z_{n}})$. Let us denote by $\mu_{f}:={\rm deg}_{0} \nabla f.$ 

This number was named later as ``Milnor number" and it has several important meanings. For instance, it is known that the regular fiber $F_{\theta}=(\frac{f}{\Vert f \Vert})^{-1}(\theta)$ has the homotopy type of a bouquet of $n$-dimensional spheres and the number of spheres in the bouquet is $\mu_{f}.$ Therefore, the Euler-Poincaré characteristic of the (regular) fiber satisfies the following {\it Poincaré-Hopf formula}
\begin{equation}\label{Milfor}
\chi(F_{\theta})=1+(-1)^n\mu_{f}.
\end{equation}

In the search of topological and analytical invariants of complex and real singularities, this type of Poincaré-Hopf formula became a starting point of several others formulae, for isolated and non-isolated singularities, in the local and global settings. Below we will discuss briefly some of them in the local real setting.

\subsection{Degree formulae in the real case} In \cite{Kh} Khimshiashvili considered a germ of real analytic function $f: (\mathbb{R}^n,0) \rightarrow (\mathbb{R},0)$ with isolated critical point at the origin, and proved that
\begin{equation}\label{Kh}
\chi \big(f^{-1}(\delta) \cap B_{\epsilon}^{n} \big)=1-\hbox{sign}(-\delta)^n  \hbox{deg}_0 \nabla f,
\end{equation}

\noindent where $0 < \vert \delta \vert \ll \varepsilon \ll 1$ is a regular value, $B_{\epsilon}^{n}$ stands for the closed ball centered at the origin, $\nabla f$ is the gradient vector field of $f$ and $\mathrm{deg}_0 \nabla f$ is the topological degree of the mapping
$$\epsilon \frac{\nabla f}{\| \nabla f \|} :S_\epsilon ^{n-1} \rightarrow S_{\epsilon}^{n-1}.$$

The formula above was extended for mappings with isolated singularity as follows.

Let $F=(f_{1},\cdots, f_{p}): (\mathbb{R}^{n},0)\to (\mathbb{R}^{p},0) $ be an analytic mapping with isolated critical point at the origin. Then, each coordinate function $f_{i},$ for $i=1,\cdots, p,$ also has isolated critical point at the origin. Using these notations, formulae (\ref{Milfor}) and (\ref{Kh}) above were extended in the following way:

\begin{theorem}\cite[Theorem 1.3, page. 210]{ADD}\label{euler}
Given $F:(\mathbb{R}^{n},0) \to (\mathbb{R}^{p},0)$, $n > p \geq 1$,
as above, the following holds true:
\begin{itemize}
\item[(i)] If $n$ is even, then
$\chi (\mathcal{M}_{F}^{T}) = 1-\mathrm{deg}_0 \nabla f_1$.
Moreover, we have
$$\mathrm{deg}_0 \nabla f_{1}= \mathrm{deg}_0 \nabla f_{2}
= \cdots= \mathrm{deg}_0 \nabla f_{p}.$$
\item[(ii)] If $n$ is odd, then
$\chi (\mathcal{M}_{F}^{T}) = 1$. Moreover, we have
$\mathrm{deg}_0 \nabla f_{i}=0$ for $i=1, 2, \ldots, p$.
\end{itemize}
\end{theorem}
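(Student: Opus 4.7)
The plan is to reduce the computation of $\chi(\mathcal{M}_F^T)$ to the Khimshiashvili formula (\ref{Kh}) applied to each coordinate $f_i$, and then to exploit the connectedness of the base sphere in the Milnor tube fibration of Corollary~\ref{Miltube}.

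First, I would observe that each $f_i$ inherits the isolated-critical property of $F$: if $\nabla f_i(x)=0$ at some $x\neq 0$ close to the origin, then the $i$-th row of the Jacobian $DF(x)$ vanishes, so $\mathrm{rank}\,DF(x)\le p-1$ and $x$ would be a critical point of $F$, contradicting the hypothesis. Consequently, Khimshiashvili's formula applies and yields, for every $i$ and every small regular $\delta\neq 0$,
\[
\chi\bigl(f_i^{-1}(\delta)\cap B^n_\epsilon\bigr)=1-\mathrm{sign}(-\delta)^n\,\mathrm{deg}_0\nabla f_i.
\]
The heart of the argument is then the identity
\[
\chi(\mathcal{M}_F^T)=\chi\bigl(f_i^{-1}(y_i)\cap B^n_\epsilon\bigr),\qquad i=1,\ldots,p,
\]
where $y=(y_1,\ldots,y_p)$ is the small regular value defining $\mathcal{M}_F^T=F^{-1}(y)\cap B^n_\epsilon$. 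To obtain it I would fix $i=1$ and study the sequential slicing of $f_1^{-1}(y_1)\cap B_\epsilon^n$ by the remaining hypersurfaces $\{f_j=y_j\}$, using stratified Morse theory to show that, thanks to the isolated-critical assumption on $F$ and the transversality to $S^{n-1}_\epsilon$ provided by Milnor's condition (b), no handle is attached at any step, so the Euler characteristic is preserved all the way down to $\mathcal{M}_F^T$.

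With the identity in hand, the two parts of the theorem follow immediately. If $n$ is even, $\mathrm{sign}(-\delta)^n=1$ and the identity gives $\chi(\mathcal{M}_F^T)=1-\mathrm{deg}_0\nabla f_i$ for each $i$, which produces both the formula and the equality of all the $\mathrm{deg}_0\nabla f_i$. If $n$ is odd (and $p\ge 2$), the base sphere $S^{p-1}_\delta$ is connected, so the Milnor tube fibration forces $\chi(\mathcal{M}_F^T)$ to be independent of the sign of any particular coordinate $y_i$; applying the identity with $\delta>0$ and $\delta<0$ yields $1+\mathrm{deg}_0\nabla f_i=1-\mathrm{deg}_0\nabla f_i$, whence $\mathrm{deg}_0\nabla f_i=0$ and $\chi(\mathcal{M}_F^T)=1$.

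The main obstacle I anticipate is precisely the key identity above. Naive slicing does not in general preserve Euler characteristic, and the coordinate hypersurfaces $\{f_j=y_j\}$ may fail to be transverse to the boundary sphere $S^{n-1}_\epsilon$ at interior critical points of the distance function $\rho$ restricted to them. The argument must therefore exploit the special structure of the coordinate level sets of a map germ with isolated critical locus to rule out such handle attachments; this is where the isolated-critical hypothesis does strictly more work than merely Milnor's conditions (a) and (b).
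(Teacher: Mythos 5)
The survey states this theorem by citation only --- it is Theorem 1.3 of \cite{ADD} and no proof is reproduced here --- so there is no in-paper argument to compare yours against; I can only assess the proposal on its own terms. Your skeleton is the natural one: each $f_i$ does inherit an isolated critical point (exactly the observation the survey makes just before the statement), Khimshiashvili's formula (\ref{Kh}) then applies to each $f_i$, and everything reduces to the identity $\chi(\mathcal{M}_F^T)=\chi(f_i^{-1}(y_i)\cap B^n_\epsilon)$, with the connectedness of $S^{p-1}_\delta$ used to play the two signs of $y_i$ against each other in the odd case. The deductions you draw \emph{from} that identity are correct, and your quiet restriction to $p\ge 2$ in the odd case is in fact necessary: for $p=1$ and $n$ odd the statement as transcribed already fails for $f=x_1^2+\cdots+x_n^2$, whose gradient has local degree $1$ and whose positive level in the ball is a sphere of Euler characteristic $2$.

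The genuine gap is the key identity itself, and the mechanism you propose for it does not work as stated. You are right that the successive slices have no \emph{interior} critical points, since a critical point of $f_j$ restricted to $f_I^{-1}(y_I)\cap\mathring{B}^n_\epsilon$ would lie in $\Sigma_{f_{I\cup\{j\}}}\subseteq\Sigma_F=\{0\}$, which is off the slice. But each slice $f_I^{-1}(y_I)\cap B^n_\epsilon$ is a manifold with boundary $f_I^{-1}(y_I)\cap S^{n-1}_\epsilon$, and the next coordinate function restricted to that boundary \emph{does} have critical points: they occur exactly on the Milnor set $M(f_{I\cup\{j\}})$, which is in general a nonempty, positive-dimensional semianalytic set reaching the sphere (for $p<n$ it is strictly larger than $\Sigma_{f_{I\cup\{j\}}}$). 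In Morse theory for manifolds with boundary each such point where the gradient points inward attaches a handle, so ``no handle is attached at any step'' is false in general; what a correct proof must show is that the \emph{signed total} of these boundary contributions is precisely the degree discrepancy in the statement --- a quantitative bookkeeping, not a vanishing statement --- and asserting that the Euler characteristic is preserved down the slicing tower is essentially assuming the theorem. A way to close the argument using only machinery the survey itself assembles is to bypass the slicing: Corollary \ref{milnorformula} gives $\chi({\rm Lk}^{0}(V_{\{1\}}))=2\chi(\mathcal{M}_F^T)$ for $n$ even and $\chi({\rm Lk}^{0}(V_{\{1\}}))=2-2\chi(\mathcal{M}_F^T)$ for $n$ odd; for $n$ even one combines this with $\chi({\rm Lk}^{0}(V_{\{1\}}))=2\chi(f_1^{-1}(\delta)\cap B^n_\epsilon)=2-2\,\mathrm{deg}_0\nabla f_1$ (Khimshiashvili plus the fact that a compact odd-dimensional manifold with boundary satisfies $\chi(M)=\tfrac{1}{2}\chi(\partial M)$), while for $n$ odd the link is a closed odd-dimensional manifold, so $\chi({\rm Lk}^{0}(V_{\{1\}}))=0$ and $\chi(\mathcal{M}_F^T)=1$ follows at once; only the vanishing of the degrees in the odd case then still requires the comparison of the two signs of $y_i$, i.e.\ a proved version of your identity.
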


\begin{example}[Case of isolated singularity of holomorphic functions]
Given a germ of holomorphic function $f: (\bC^{n+1}, 0) \to (\bC,0)$ with isolated critical point at origin, we consider the complex variable $z_{j}=x_{j}+iy_{j},$ for $1\leq j \leq n+1$ and $f=(P,Q):(\mathbb{R}^{2n+2},0)\to (\mathbb{R}^{2},0)$ where $P=\Re(f)$ and $Q=\Im(f)$ is the real and imaginary part of $f,$ respectively. By Cauchy-Riemann equations, we have that
$$\nabla P(x,y)=(\frac{\partial P(x,y)}{\partial x_{1}}-i\frac{\partial P(x,y)}{\partial y_{1}}, \ldots, \frac{\partial P(x,y)}{\partial x_{n+1}}-i\frac{\partial P(x,y)}{\partial y_{n+1}} ) ,$$

which we can be identified with
$$\nabla P(x,y)=(\frac{\partial P(x,y)}{\partial x_{1}}, -\frac{\partial P(x,y)}{\partial y_{1}}, \ldots, \frac{\partial P(x,y)}{\partial x_{n+1}},-\frac{\partial P(x,y)}{\partial y_{n+1}}) .$$

Denote by $\displaystyle{H(x,y):=(\frac{\partial P(x,y)}{\partial x_{1}},\frac{\partial P(x,y)}{\partial y_{1}}, \ldots, \frac{\partial P(x,y)}{\partial x_{n+1}},\frac{\partial P(x,y)}{\partial y_{n+1}})}$ and by ${\rm deg}_{0} \nabla H$ the topological degree of the map $\epsilon \frac{\nabla H}{\| \nabla H \|} :S_{\epsilon}^{2n+1} \rightarrow S_{\epsilon}^{2n+1},$ for all $\epsilon>0$ small enough. Now, it is easy to see that
$${\rm deg}_{0} \nabla P =(-1)^{n+1}.{\rm deg}_{0} \nabla H $$ and so Milnor's formula (\ref{Milfor}) follows from Theorem \ref{euler}, item i).
\end{example}

It seems natural to search for similar result for non-isolated singularities. This was done in \cite{DA} and we will explain below the main strategy used to get an analogous formula.

Let us consider $F=(f_1,\ldots,f_p): (\mathbb{R}^n,0) \rightarrow (\mathbb{R}^p,0),$ $1\leq p \leq n-1,$ an analytic mapping.
Consider $l \in \{1,\ldots, p\}$ and $I= \{i_1,\ldots,i_l\}$ an $l$-tuple of pairwise distinct elements of $ \{1,\ldots,p\}$ and let us denote by $f_I$ the mapping $(f_{i_1},\ldots,f_{i_l}): (\mathbb{R}^n,0) \rightarrow (\mathbb{R}^l,0)$. Suppose that $F$ satisfies Milnor's condition $(a)$ at the origin. Then, we have
$$\Sigma_{f_I} \subset \Sigma_F \subset F^{-1}(0) \subset f_{I}^{-1}(0),$$
and so by definition the map $f_I$ also satisfies Milnor's condition (a) at the origin.

It is clear that $\Sigma_{f_I,\rho} \subset \Sigma_{F,\rho}.$ In \cite{DA} it was proved the following key result:

\begin{lemma}\cite{DA} Assume that $F$ satisfies Milnor's conditions $(a)$ and $(b)$ at the origin. Then, for all $l \in \{1,\ldots, p\}$ and $I= \{i_1,\ldots,i_l\} \subset \{1,\ldots,p\}$, the maps $f_I: (\mathbb{R}^n,0) \rightarrow (\mathbb{R}^l,0)$ satisfies Milnor's conditions $(a)$ and $(b)$.
\end{lemma}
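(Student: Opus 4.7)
Condition (a) for $f_I$ is immediate and already recorded in the excerpt: if $\nabla f_{i_1},\ldots,\nabla f_{i_l}$ are linearly dependent at a point, so are $\nabla f_1,\ldots,\nabla f_p$, whence $\Sigma_{f_I}\subset\Sigma_F\subset V\subset f_I^{-1}(0)$. The real work is to derive Milnor's condition (b) for $f_I$, namely that $0$ is isolated in $f_I^{-1}(0)\cap\overline{M(f_I)\setminus f_I^{-1}(0)}$.

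My plan is an argument by contradiction via the curve selection lemma. Write $V_I=f_I^{-1}(0)$. The set $V_I\cap\overline{M(f_I)\setminus V_I}$ is semi-analytic ($V_I$ is analytic, $M(f_I)$ is closed semi-analytic, and closure is a semi-analytic operation), so if $0$ were not isolated in it, curve selection would produce a real analytic arc $\gamma:[0,\eta)\to\mathbb{R}^n$ with $\gamma(0)=0$, $\gamma(t)\neq 0$ for $t>0$, and $\gamma(t)\in V_I\cap\overline{M(f_I)\setminus V_I}$ on $(0,\eta)$. Since $M(f_I)$ is closed, $\gamma(t)\in V_I\cap M(f_I)$ for those $t$. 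Moreover $M(f_I)\setminus V_I\subset M(F)\setminus V$, because linear dependence of a sub-tuple of gradients forces the same for the full tuple while $V\subset V_I$; taking closures, $\overline{M(f_I)\setminus V_I}\subset\overline{M(F)\setminus V}$. Consequently, $\gamma(t)\in V$ for some $t>0$ would place $\gamma(t)\in V\cap\overline{M(F)\setminus V}$, contradicting condition (b) for $F$. So $\gamma(t)\notin V$ for every $t>0$.

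The finale is a clean gradient computation along $\gamma$. Since $\gamma(t)\notin V$ and $\Sigma_{f_I}\subset V$ near the origin, the gradients $\nabla f_{i_1}(\gamma(t)),\ldots,\nabla f_{i_l}(\gamma(t))$ are linearly independent, and $\gamma(t)\in M(f_I)$ then forces $\nabla\rho(\gamma(t))=\sum_j\lambda_j(t)\nabla f_{i_j}(\gamma(t))$ for some scalars $\lambda_j(t)$. Differentiating the identity $f_{i_j}(\gamma(t))\equiv 0$ (valid since $\gamma(t)\in V_I$ for all $t$) gives $\nabla f_{i_j}(\gamma(t))\cdot\gamma'(t)=0$ for every $j$; dotting the previous expression with $\gamma'(t)$ then yields $\tfrac{d}{dt}\rho(\gamma(t))=\nabla\rho(\gamma(t))\cdot\gamma'(t)=0$. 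Hence $\rho\circ\gamma$ is constant on $[0,\eta)$, equal to $\rho(\gamma(0))=0$; but $\rho(\gamma(t))=\|\gamma(t)\|^2>0$ for $t>0$, a contradiction.

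I expect the principal obstacle to be the step ruling out $\gamma(t)\in V$ for $t>0$: this is where Milnor's condition (b) for $F$ is genuinely invoked, through the inclusion $\overline{M(f_I)\setminus V_I}\subset\overline{M(F)\setminus V}$. Once the arc is known to stay outside $V$, the derivative identity is essentially automatic from the $V_I$ constraint combined with the $M(f_I)$ condition.
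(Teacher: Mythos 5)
Your proof is correct, and it is essentially the argument of the source: the paper itself gives no proof of this lemma, deferring entirely to \cite{DA}, Lemma 4.1, where the same strategy is used --- condition $(a)$ by the inclusion $\Sigma_{f_I}\subset\Sigma_F\subset V\subset V_I$, and condition $(b)$ by contradiction via the Curve Selection Lemma applied to $V_I\cap\overline{M(f_I)\setminus V_I}$, with the arc forced off $V$ by condition $(b)$ for $F$ through the inclusion $\overline{M(f_I)\setminus V_I}\subset\overline{M(F)\setminus V}$, and then eliminated by the computation $(\rho\circ\gamma)'=\nabla\rho(\gamma(t))\cdot\gamma'(t)=0$. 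All the steps you flag as delicate (semi-analyticity of the set, linear independence of the sub-tuple of gradients off $V$, and the span argument for $\nabla\rho$) check out.
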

\proof See \cite[Lemma 4.1, page 7]{DA}.
\endproof

\begin{corollary}
There exists $\epsilon_{0}>0$ such that, for all $l \in \{1,\ldots, p\}$ and $I= \{i_1,\ldots,i_l\} \subset \{1,\ldots,p\}$, the maps $f_I:  (\mathbb{R}^n,0) \rightarrow (\mathbb{R}^l,0)$ have $\epsilon_{0}$ as a Milnor's radius. Therefore, for all $2\leq l \leq p$ there exist the Milnor fibrations (see Theorem \ref{massey} and Corollary \ref{Miltube}) for the maps $f_I : (\mathbb{R}^n,0) \rightarrow (\mathbb{R}^l,0).$
\end{corollary}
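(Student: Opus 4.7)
The plan is to combine the previous Lemma with a straightforward finiteness argument. The key observation is that Milnor's conditions $(a)$ and $(b)$ are local: once they hold for a given map germ at the origin, every sufficiently small radius serves as a Milnor radius for that germ.

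First, I would invoke the Lemma stated just above the Corollary: for every nonempty $I=\{i_{1},\ldots,i_{l}\}\subset\{1,\ldots,p\}$, the map $f_{I}\colon(\mathbb{R}^{n},0)\to(\mathbb{R}^{l},0)$ satisfies Milnor's conditions $(a)$ and $(b)$ at the origin. By the definition of a Milnor radius recalled before Theorem~\ref{massey}, this means that for each such $I$ there exists some $\epsilon_{I}>0$ satisfying
\begin{equation*}
B_{\epsilon_{I}}^{n}\cap\overline{\Sigma_{f_{I}}\setminus f_{I}^{-1}(0)}=\varnothing
\quad\text{and}\quad
B_{\epsilon_{I}}^{n}\cap f_{I}^{-1}(0)\cap\overline{M(f_{I})\setminus f_{I}^{-1}(0)}\subseteq\{0\}.
\end{equation*}

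Next, I would note that the collection of nonempty subsets $I\subset\{1,\ldots,p\}$ is finite (there are $2^{p}-1$ of them), so the number
\begin{equation*}
\epsilon_{0}:=\min_{\varnothing\neq I\subset\{1,\ldots,p\}}\epsilon_{I}
\end{equation*}
is strictly positive. I would then verify that $\epsilon_{0}$ is simultaneously a Milnor radius for every $f_{I}$. This reduces to the trivial remark that if $\epsilon'\leq\epsilon_{I}$ then $B_{\epsilon'}^{n}\subseteq B_{\epsilon_{I}}^{n}$, so the two set-theoretic conditions defining a Milnor radius are automatically inherited by the smaller ball. Hence every nonempty $I$ admits $\epsilon_{0}$ as a Milnor radius.

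Finally, for each $I$ with $2\leq|I|=l\leq p$, the map $f_{I}$ meets the hypotheses of Theorem~\ref{massey} (and hence of Corollary~\ref{Miltube}), with $\epsilon_{0}$ playing the role of the Milnor radius. Applying these results produces, for every $0<\epsilon\leq\epsilon_{0}$, a $\delta>0$ such that the restricted maps
\begin{equation*}
f_{I|}\colon B_{\epsilon}^{n}\cap f_{I}^{-1}(B_{\delta}^{l}\setminus\{0\})\longrightarrow B_{\delta}^{l}\setminus\{0\},
\qquad
f_{I|}\colon B_{\epsilon}^{n}\cap f_{I}^{-1}(S_{\delta}^{l-1})\longrightarrow S_{\delta}^{l-1},
\end{equation*}
are projections of smooth locally trivial fibrations, giving the desired conclusion. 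The only potential obstacle, namely the uniformity of the radius across the $2^{p}-1$ subsystems $f_{I}$, is handled at once by the finite-minimum argument, so no genuine difficulty arises beyond citing the Lemma.
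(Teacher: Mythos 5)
Your proof is correct and follows exactly the argument the paper leaves implicit: the Lemma gives each $f_I$ Milnor's conditions $(a)$ and $(b)$, hence a Milnor radius $\epsilon_I$, and since the defining conditions of a Milnor radius are inherited by smaller balls, the minimum over the finitely many subsets $I$ works uniformly. The application of Theorem~\ref{massey} and Corollary~\ref{Miltube} for $2 \le l \le p$ is then immediate, as you state.
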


In \cite{DA} several formulae were proved connecting the Euler-Poincaré characteristics of the links of the sets $f_{I}^{-1}(0)$ and of the Milnor fiber $\mathcal{M}_{F}^{T}$ for an analytic map germ $F$ under Milnor's conditions $(a)$ and $(b)$. We remind below some of them.

As above, let us choose $l \in \{1,\ldots,p \}$ and an $l$-tuple $I=\{i_1,\ldots,i_l\}$ of pairwise distinct elements of $\{1,\ldots,p\}$. We write $J=\{i_1,\ldots,i_{l-1}\}$.
We also denote by ${\rm Lk}^{0}(V_{I})$ (resp. ${\rm Lk}^{0}(V_{J})$) the local link of the zero-set of $f_I$ (resp. $f_J$). If $l=1$ then $J= \emptyset$ and we put $f_J=0$.

\begin{proposition}\label{CharL1}
We have:
$$\chi({\rm Lk}^{0}(V_{J}))-\chi({\rm Lk}^{0}(V_{I}))=(-1)^{n-l} 2 \chi(\mathcal{M}_{F}^{T}).$$
\end{proposition}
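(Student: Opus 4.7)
The approach reduces the difference of link Euler characteristics to a pair of $\chi_c$-computations on open strata of $V_J\cap \mathring B_\epsilon^n$, and then evaluates those through the Milnor tube fibration of $f_I$, which is available because the lemma from \cite{DA} guarantees that $f_I$ inherits Milnor's conditions (a) and (b) from $F$. The first step uses the conic structure of a real-analytic germ: for $K\in\{I,J\}$, $V_K\cap B_\epsilon^n$ is a cone on ${\rm Lk}^0(V_K)$, so $\chi(V_K\cap B_\epsilon^n)=1$ and additivity on the partition $V_K\cap B_\epsilon^n=(V_K\cap \mathring B_\epsilon^n)\sqcup (V_K\cap S_\epsilon^{n-1})$ gives $\chi_c(V_K\cap \mathring B_\epsilon^n)=1-\chi({\rm Lk}^0(V_K))$. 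Since $V_I=V_J\cap\{f_{i_l}=0\}$, additivity of $\chi_c$ on the constructible partition of $V_J\cap \mathring B_\epsilon^n$ into $V_I\cap \mathring B_\epsilon^n$ and the two strata $U^\pm := V_J\cap \mathring B_\epsilon^n\cap\{\pm f_{i_l}>0\}$ then yields
\[
\chi({\rm Lk}^0(V_J))-\chi({\rm Lk}^0(V_I))=-\chi_c(U^+)-\chi_c(U^-).
\]

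The second step computes each $\chi_c(U^\pm)$ by invoking the Milnor tube fibration $f_I\colon B_\epsilon^n\cap f_I^{-1}(B_\delta^l\setminus\{0\})\to B_\delta^l\setminus\{0\}$ of Theorem \ref{massey}, whose fiber is $\mathcal{M}_{f_I}^T$. Choosing $\epsilon$ first and $\delta$ afterwards so that $|f_{i_l}|<\delta$ on $B_\epsilon^n$, the contractible interval $\{0\}\times(0,\delta)$ lies in $B_\delta^l\setminus\{0\}$ and its preimage in the open ball is exactly $U^+$; triviality of the bundle over the interval gives $U^+\cong \mathring{\mathcal{M}}_{f_I}^T\times(0,\delta)$, and combining multiplicativity of $\chi_c$, the identity $\chi_c((0,\delta))=-1$, and Poincar\'e--Lefschetz duality $\chi_c(\mathring{\mathcal{M}}_{f_I}^T)=(-1)^{n-l}\chi(\mathcal{M}_{f_I}^T)$ for the compact $(n-l)$-manifold with boundary $\mathcal{M}_{f_I}^T$, I obtain $\chi_c(U^+)=-(-1)^{n-l}\chi(\mathcal{M}_{f_I}^T)$. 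The analogous calculation over $(-\delta,0)$ yields the same value for $\chi_c(U^-)$, since the two half-segments sit in the connected base $B_\delta^l\setminus\{0\}$ as soon as $l\geq 2$ and therefore have diffeomorphic fibers. Assembling the two steps gives
\[
\chi({\rm Lk}^0(V_J))-\chi({\rm Lk}^0(V_I))=2(-1)^{n-l}\chi(\mathcal{M}_{f_I}^T),
\]
which matches the stated formula upon invoking the auxiliary fact --- the non-isolated analogue of the equality of degrees $\deg_0\nabla f_i=\deg_0\nabla f_j$ in Theorem \ref{euler} --- that $\chi(\mathcal{M}_{f_I}^T)=\chi(\mathcal{M}_F^T)$ under Milnor's conditions (a) and (b).

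The main obstacle is the analytic calibration of $\epsilon$ and $\delta$ needed to guarantee that the trivializing interval $(0,\delta)$ really does exhaust the positive values of $f_{i_l}$ on $V_J\cap B_\epsilon^n$: since $|f_{i_l}|=O(\epsilon^k)$ on $B_\epsilon^n$, where $k$ is the vanishing order of $f_{i_l}$ at the origin, the generic case $k\geq 2$ goes through by shrinking $\epsilon$ below the tube-size $\delta$, while the order-one case $\nabla f_{i_l}(0)\neq 0$ must be handled separately (there $\mathcal{M}_{f_I}^T$ is topologically trivial and the identity reduces to a dimensional check). A secondary delicate point is the $l=1$ case of disconnected base $B_\delta^1\setminus\{0\}$, where the equality $\chi(\mathcal{M}_{f_I}^{T,+})=\chi(\mathcal{M}_{f_I}^{T,-})$ of the Euler characteristics of the two half-fibers must be supplied by an independent argument before the same $\chi_c$-computation applies on each side.
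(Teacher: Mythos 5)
The paper itself gives no argument here (it simply cites \cite[Proposition 7.1]{DA}), so your proposal has to be judged on its own. Your skeleton --- additivity of $\chi_c$ plus the conic structure of $V_I\cap B^n_\epsilon$ and $V_J\cap B^n_\epsilon$ to reduce to $-\chi_c(U^+)-\chi_c(U^-)$, then the tube fibration of $f_I$ over a contractible interval and Lefschetz duality $\chi_c(\mathring{\mathcal M}^T_{f_I})=(-1)^{n-l}\chi(\mathcal M^T_{f_I})$ --- is the right one and does lead to the stated formula. But the pivotal identification $U^+\cong\mathring{\mathcal M}^T_{f_I}\times(0,\delta)$ has a genuine gap that your proposed calibration does not close. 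Theorem \ref{massey} produces $\delta$ \emph{after} $\epsilon$, with $\delta\ll\epsilon$, whereas your identification needs $f_{i_l}<\delta$ on all of $V_J\cap\mathring B^n_\epsilon$; since $f_{i_l}$ attains values of order $\epsilon^k$ there, the two requirements are incompatible, and ``shrinking $\epsilon$ below the tube-size $\delta$'' is circular: changing $\epsilon$ changes the fiber $f_I^{-1}(y)\cap B^n_\epsilon$ and forces a new, smaller $\delta$. What is true is only $\chi_c(U^+)=\chi_c\bigl(V_J\cap\mathring B^n_\epsilon\cap\{0<f_{i_l}<\delta\}\bigr)+\chi_c\bigl(V_J\cap\mathring B^n_\epsilon\cap\{f_{i_l}\ge\delta\}\bigr)$; your product argument evaluates the first summand, and you must still prove the second vanishes. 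It does, but this needs a separate argument: the conic structure of the closed semianalytic set $V_J\cap\{f_{i_l}\ge0\}$ gives $\chi\bigl(V_J\cap B^n_\epsilon\cap\{f_{i_l}\ge0\}\bigr)=1$, whence (subtracting the cone $V_I\cap B^n_\epsilon$ and the half-open tube, which fibers over $(0,\delta)$) one gets $\chi\bigl(V_J\cap B^n_\epsilon\cap\{f_{i_l}\ge\delta\}\bigr)=\chi(\mathcal M^T_{f_I})$, and Lefschetz duality for this compact $(n-l+1)$-dimensional manifold with corners then yields $\chi\bigl(V_J\cap S^{n-1}_\epsilon\cap\{f_{i_l}\ge\delta\}\bigr)=\chi(\mathcal M^T_{f_I})$ as well, so the dropped piece has $\chi_c=0$.

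Two further load-bearing steps are asserted rather than proved. The equality $\chi(\mathcal M^T_{f_I})=\chi(\mathcal M^T_F)$ is not a formal analogue of the degree equalities in Theorem \ref{euler}: it is a separate theorem of \cite{DA}, proved by an inductive fibration argument on $\mathcal M^T_{f_J}$ using Milnor's condition (a) to exclude interior critical points of $f_{i_l}$ and condition (b) to control the boundary; without it your computation only gives the formula with $\chi(\mathcal M^T_{f_I})$ on the right. Likewise the $l=1$ case, where the base $B^1_\delta\setminus\{0\}$ is disconnected, needs $\chi(\mathcal M^{T,+}_{f_{i_1}})=\chi(\mathcal M^{T,-}_{f_{i_1}})$; by Khimshiashvili's formula (\ref{Kh}) the two sides differ by $2\deg_0\nabla f_{i_1}$ when $n$ is odd for a general germ, so this equality is a consequence of $f_{i_1}$ being a component of a map satisfying conditions (a) and (b) and requires its own proof, which you do not supply. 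So the approach is sound and the answer correct, but three essential steps are missing.
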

\proof See \cite[Proposition 7.1, page 10]{DA}. \endproof

\begin{corollary} \label{milnorformula}
Let $j \in \{1,\ldots,p\}$. If $n$ is even, then we have $\chi({\rm Lk}^{0}(V_{\{j\}}))= 2 \chi(\mathcal{M}_{F}^{T})$ and if $n$ is odd, then we have $\chi({\rm Lk}^{0}(V_{\{j\}}))=2-2\chi(\mathcal{M}_{F}^{T})$.
\end{corollary}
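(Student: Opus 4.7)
The plan is to derive the corollary by directly specializing Proposition \ref{CharL1} to the case $l=1$, $I=\{j\}$, $J=\emptyset$. With this choice the left-hand side of the identity in Proposition \ref{CharL1} reduces to $\chi({\rm Lk}^{0}(V_{\emptyset}))-\chi({\rm Lk}^{0}(V_{\{j\}}))$, and the exponent on the sign becomes $(-1)^{n-1}$. So the whole task comes down to computing $\chi({\rm Lk}^{0}(V_{\emptyset}))$ and unwinding signs.

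By the convention stated right before the proposition, when $J=\emptyset$ we set $f_J \equiv 0$, whence $V_{J}=f_{J}^{-1}(0)=\mathbb{R}^{n}$. Therefore the ``local link'' $\mathrm{Lk}^{0}(V_{J})=V_{J}\cap S_{\epsilon}^{n-1}$ coincides with the entire sphere $S_{\epsilon}^{n-1}$. Using the standard computation $\chi(S^{n-1})=1+(-1)^{n-1}$, we get $\chi({\rm Lk}^{0}(V_{\emptyset}))=0$ when $n$ is even and $\chi({\rm Lk}^{0}(V_{\emptyset}))=2$ when $n$ is odd.

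Substituting into Proposition \ref{CharL1} with $l=1$:
\begin{equation*}
\chi({\rm Lk}^{0}(V_{\emptyset}))-\chi({\rm Lk}^{0}(V_{\{j\}}))=(-1)^{n-1}\,2\,\chi(\mathcal{M}_{F}^{T}).
\end{equation*}
In the case $n$ even, the left-hand side is $-\chi({\rm Lk}^{0}(V_{\{j\}}))$ and the sign on the right is $-1$, yielding $\chi({\rm Lk}^{0}(V_{\{j\}}))=2\chi(\mathcal{M}_{F}^{T})$. In the case $n$ odd, the left-hand side is $2-\chi({\rm Lk}^{0}(V_{\{j\}}))$ and the sign on the right is $+1$, yielding $\chi({\rm Lk}^{0}(V_{\{j\}}))=2-2\chi(\mathcal{M}_{F}^{T})$.

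There is essentially no obstacle here, the content is entirely in Proposition \ref{CharL1}; the only point that needs a brief justification is that the $l=1$, $J=\emptyset$ boundary case of the proposition makes sense with the convention $f_\emptyset =0$ and that the resulting ``link'' is the full sphere. One may wish to verify that the proof of Proposition \ref{CharL1} in \cite{DA} indeed handles this degenerate case, or alternatively to recover it by a direct Mayer--Vietoris / transversality argument on $S_{\epsilon}^{n-1}$ using the fibration from Corollary \ref{Miltube}; either way the numerical identity above then gives the claim.
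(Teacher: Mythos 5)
Your proof is correct and follows exactly the paper's own route: specialize Proposition \ref{CharL1} to $l=1$ with the convention $f_\emptyset=0$, note that ${\rm Lk}^{0}(V_\emptyset)$ is the whole sphere $S_\epsilon^{n-1}$ with $\chi=0$ or $2$ according to the parity of $n$, and unwind the sign $(-1)^{n-1}$. The paper's proof is a two-line version of the same argument, so there is nothing to add.
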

\proof We apply the previous proposition to the case $l=1$. In this case, if $n$ is even then $\chi({\rm Lk}^{0}(V_{J}))=0$ and if $n$ is odd then $\chi({\rm Lk}^{0}(V_{J}))=2$. \endproof

Let us explain how to apply this result in order to get a degree formula for $\chi(\mathcal{M}_{F}^{T}).$

Given $h_{1},\cdots , h_{s}:U\subseteq \mathbb{R}^{n}\to \mathbb{R}$ analytic functions, defined in an open neighborhood of the origin $U,$ with $h_{1}(0)=\cdots =h_{s}(0)=0$, let $h(x)=h_{1}(x)^2+\cdots +h_{s}(x)^2.$ Of course, the following equality of analytic sets holds $$\{x\in U:~ h_{1}(x)=\cdots =h_{s}(x)=0\}=\{x\in U:~ h(x)=0\}.$$ Szafraniec in \cite{Sz1} considered the function $g(x)=h(x)-c\rho(x)^{k},$ where $c>0$ and $k$ an integer, and showed that for $k$ large enough the function $g$ has an isolated singular point at the origin. Moreover, he proved that for all small radius $\epsilon $, the following Poincaré-Hopf type formula holds true:
\begin{equation}\label{Szafr}
\chi(\{x\in S_{\epsilon}^{n-1}: ~ h(x)=0\})=1-\deg_{0} \nabla g.
\end{equation}

From Szafraniec's equation above and Corollary \ref{milnorformula}, we can state a Poincaré-Hopf type formula for local non-isolated singularities as follows:

\begin{proposition}
Given $F=(f_1,\ldots,f_p): (\mathbb{R}^n,0) \rightarrow (\mathbb{R}^p,0),$ $1\leq p \leq n-1,$ an analytic map germ satisfying Milnor's conditions $(a)$ and $(b).$ There exist $c>0$ and an integer $k$ such that if $g(x)=f_{1}^{2}(x)-c.\rho(x)^{k}$ then we have, 

\begin{enumerate}
\item [(i)] if $n$ is even, then $\chi(\mathcal{M}_{F}^{T})=\frac{1}{2}(1-\deg_{0} \nabla g);$
\item [(ii)] if $n$ is odd, then $\chi(\mathcal{M}_{F}^{T})=\frac{1}{2}(1+\deg_{0} \nabla g)$.
\end{enumerate}
\end{proposition}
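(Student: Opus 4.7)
The plan is to specialize Szafraniec's formula (\ref{Szafr}) to the single function $f_1$ and then combine it with Corollary \ref{milnorformula} for $j=1$. Both ingredients are already available, so the proof is essentially an algebraic identity.

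Taking $s=1$ and $h_1 = f_1$ in Szafraniec's setup, the function $h(x) = f_1(x)^2$ has zero set equal to $\{f_1 = 0\}$, so Szafraniec produces a constant $c > 0$ and an integer $k$ for which $g(x) = f_1^2(x) - c\rho(x)^k$ has an isolated critical point at the origin. Formula (\ref{Szafr}) then reads
\begin{equation*}
\chi({\rm Lk}^0(V_{\{1\}})) = \chi\bigl(\{x \in S_\epsilon^{n-1} : f_1(x) = 0\}\bigr) = 1 - \deg_0 \nabla g
\end{equation*}
for all sufficiently small $\epsilon > 0$, since the link is by definition $V_{\{1\}} \cap S_\epsilon^{n-1}$.

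On the other hand, because $F$ satisfies Milnor's conditions $(a)$ and $(b)$, Corollary \ref{milnorformula} applies with $j=1$ and yields $\chi({\rm Lk}^0(V_{\{1\}})) = 2\chi(\mathcal{M}_F^T)$ when $n$ is even and $\chi({\rm Lk}^0(V_{\{1\}})) = 2 - 2\chi(\mathcal{M}_F^T)$ when $n$ is odd. Equating the two expressions for $\chi({\rm Lk}^0(V_{\{1\}}))$ and dividing by $2$ produces exactly (i) in the even case and (ii) in the odd case.

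There is no real obstacle in this argument: the only choice to make is to work at a radius $\epsilon$ small enough for both Szafraniec's identity and Corollary \ref{milnorformula} to apply, and any sufficiently small positive $\epsilon$ satisfies both constraints. The existence of the constants $c$ and $k$ in the statement is not something we verify ourselves; it is built into the Szafraniec construction, which is the only nontrivial input apart from Corollary \ref{milnorformula}.
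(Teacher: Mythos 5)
Your proof is correct and follows exactly the route the paper takes: the paper's own proof is the one-line remark that the result ``follows from Corollary \ref{milnorformula} and Szafraniec's results,'' which is precisely the combination you spell out. The algebra in both parity cases checks out, so there is nothing to add.
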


\proof  It follows from Corollary \ref{milnorformula} and Szafraniec's results.  \endproof

\section{Fibration structure and degree formulae - Global setting}

\subsection{(Global Milnor) Fibration's structure}

It is well known that, for a polynomial holomorphic function $f:\bC^{n+1} \to \bC,$ there exists a finite set of complex values called the bifurcation set, denoted by $B_{f} \subset \mathbb{C},$ such that the restriction map $f_{|}: \mathbb{C}^{n+1} \setminus f^{-1}(B_{f}) \to \mathbb{C} \setminus B_{f}$ is a smooth locally trivial fibration. If one takes a closed disc in the target space, $D_{R}\subset \mathbb{C}$ with $R>0$ big enough in such a way that $B_{f}$ is included inside its interior, this fibration induces the so-called {\it global monodromy fibration of $f,$} 

\begin{equation}
f_{|}: \mathbb{C}^{n+1}\cap f^{-1}(S_{R}^{1})\to S_{R}^{1}\label{nemethi} 
\end{equation}

\noindent which is also a smooth locally trivial fibration.

In \cite{NZ} Nemethi and Zaharia considered a kind of regularity condition called {\it semitame condition} which control the asymptotic behavior of the fibers at infinity. Following Milnor's method for the local case, they proved that for all radii $R$ big enough, denote by $R\gg 1,$ the canonical (Milnor) projection 

\begin{equation}
\displaystyle{\frac{f}{\|f\|}}: S^{2n+1}_{R} \setminus \{f=0\} \to S^{1} \label{globalmilnor}
\end{equation}

\noindent is a smooth locally trivial fibration. But, as the authors explained in the introduction in \cite{NZ} (see Section 4 in \cite{NZ}, for further results), one can check in Broughton's example $f(x,y)=x+x^{2}y$ that the fiber of the Milnor projection (\ref{globalmilnor}) is the thrice punctured 2-sphere and the generic fiber (\ref{nemethi}) is the twice punctured 2-sphere. So, these two fibrations can not be fiber-equivalent.

In the real case, this fibration structure has been also approched in \cite{ACT1} and more recently in a more general way by the present authors in \cite{DAAC}. Below we give the main ideas and results of \cite{ACT1}.

Let $F=(f_1,\ldots,f_p): \mathbb{R}^n \rightarrow \mathbb{R}^p$, $2 \le p \le m-1$, be a polynomial mapping, $V = F^{-1}(0)$ and for $R\gg 1$, denote by ${\rm Lk}^{\infty}(V):=V\cap S_{R}^{n-1}$ the link of $V$ at infinity. Let us denote by $\rho : \mathbb{R}^{n}\to \mathbb{R},$ $\rho(x)=x_{1}^{2}+\cdots +x_{n}^{2},$ and so $S_{R}^{n-1}=\rho^{-1}(R).$

Consider the (global Milnor's) projection $\frac{F}{\Vert F \Vert}: S_{R}^{n-1}\setminus {\rm Lk}^{\infty}(V) \rightarrow S^{p-1}$ and, as in the local case, also denote by: 

\begin{enumerate}

  \item $\Sigma_F$ the set of critical points of $F.$
	
  \item $M(\frac{F}{\|F\|})$ the set of critical points of the pair $(\frac{F}{\|F\|},\rho)$ inside $\mathbb{R}^n \setminus V.$
  
  \item $M(F)$ the critical locus of the pair $(F,\rho).$
	
\end{enumerate}

\vspace{0.2cm}


With such a definition, the authors in \cite{ACT1} proved a fibration structure on spheres of radii big enough, as follows.

Following the definition stated in \cite{ACT1}, we will consider the following conditions:

Condition (A): $\overline{M(F)\setminus V}\cap V$ in bounded in $\mathbb{R}^{n}.$

Condition (B): $M(\frac{F}{\|F\|})$ is bounded in $\mathbb{R}^{n}.$

\begin{theorem}\label{t:sing}
Let $F : \bR^{n} \to \bR^p$ be a real polynomial map such that $\codim_\bR V = p$ at infinity\footnote{For the zero locus $V = F^{-1}(0)\subset \bR^{n}$, we say that ``$V$ has codimension $p$ at infinity'' if for any radius $R\gg 1$, $\codim_\bR V \setminus B_R = p$, in the sense that every irreducible component of $V$ has this property.} and assume that Condition (A) holds. Then we have the equivalence:

\begin{enumerate} 
\item [(i)] $({\rm Lk}^{\infty}(V), \frac{F}{\|F\|})$ is an open book decomposition of $S_R^{n-1}$ with singular binding, independent of the high enough radius $R\gg 1$ up to $C^\infty$ isotopy.
\item [(ii)]  Condition (B) holds.
\end{enumerate}
\end{theorem}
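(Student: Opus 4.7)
\emph{Overall plan.} The argument should mirror Massey's proof of Theorem \ref{massey} and its Corollary \ref{Miltube}: Condition (A) supplies a Milnor-type ``tube fibration'' at infinity, while Condition (B) supplies the transversality of large spheres to the level sets of $F/\|F\|$ away from $V$. The easy direction is $(i)\Rightarrow(ii)$. Indeed, a locally trivial fibration $F/\|F\|:S_R^{n-1}\setminus {\rm Lk}^{\infty}(V)\to S^{p-1}$ is in particular a smooth submersion, and by the very definition of $M(F/\|F\|)$ the restriction of $F/\|F\|$ to $S_R^{n-1}$ fails to be submersive exactly on $S_R^{n-1}\cap M(F/\|F\|)$. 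Hence if the fibration exists for every $R\gg 1$ then $S_R^{n-1}\cap M(F/\|F\|)=\varnothing$ for all such $R$, and since $M(F/\|F\|)$ is semialgebraic it must be bounded, yielding Condition (B).

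\emph{Tube fibration step of $(ii)\Rightarrow(i)$.} I would first show that there exist $R_{0},\delta_{0}>0$ such that for all $R\ge R_{0}$ and $0<\delta\le\delta_{0}$ the restriction
\[
F:\; S_R^{n-1}\cap F^{-1}\bigl(\overline{B_{\delta}^{p}}\setminus\{0\}\bigr)\;\longrightarrow\;\overline{B_{\delta}^{p}}\setminus\{0\}
\]
is a proper smooth submersion, and hence a locally trivial fibration by Ehresmann. Properness is immediate since the source is a closed subset of the compact sphere $S_R^{n-1}$. For submersivity one argues by contradiction: if for arbitrarily small $\delta$ there were $x_{\delta}\in S_{R}^{n-1}\cap M(F)\cap F^{-1}(\overline{B_\delta^p}\setminus\{0\})$, compactness of $S_R^{n-1}$ together with $F(x_\delta)\to 0$ would produce a limit point in $V\cap\overline{M(F)\setminus V}\cap S_R^{n-1}$, contradicting (A) as soon as $R$ exceeds the radius of a ball containing $V\cap\overline{M(F)\setminus V}$.

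\emph{Open book step of $(ii)\Rightarrow(i)$.} Fix $R\ge R_0$ and $\delta\le\delta_0$, and split
\[
S_R^{n-1}\setminus {\rm Lk}^{\infty}(V)=A_{\delta}\cup N_{\delta}^{\times},\quad A_{\delta}:=S_R^{n-1}\cap F^{-1}\bigl(\mathbb{R}^{p}\setminus \mathring{B}_{\delta}^{p}\bigr),\quad N_{\delta}^{\times}:=S_R^{n-1}\cap F^{-1}\bigl(\overline{B_{\delta}^{p}}\setminus\{0\}\bigr).
\]
The piece $A_\delta$ is compact, and by Condition (B) the map $F/\|F\|$ is a submersion on $S_R^{n-1}\setminus V$ for $R$ large, so $F/\|F\|:A_\delta\to S^{p-1}$ is a proper submersion, hence a locally trivial fibration by Ehresmann. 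On $N_{\delta}^{\times}$ the identity $F/\|F\|=\pi\circ F$, together with the tube fibration above, exhibits $F/\|F\|$ as the composition $N_\delta^{\times}\xrightarrow{F}B_\delta^p\setminus\{0\}\xrightarrow{\pi}S^{p-1}$ required by Definition \ref{d:booksinginf}. Gluing the two fibrations along the common boundary $\{F\in S^{p-1}_\delta\}\cap S_R^{n-1}$, via a partition of unity on $S^{p-1}$ lifted to horizontal vector fields, produces the desired global fibration and realises $({\rm Lk}^{\infty}(V),F/\|F\|)$ as a higher open book with singular binding. Independence of $R\gg 1$ up to $C^{\infty}$ isotopy follows by running the same construction on an annulus $\{R\le\|x\|\le R'\}$ and integrating the resulting gradient-like vector field, which is non-vanishing throughout by (A) and (B).

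\emph{Main obstacle.} The delicate step is the submersivity claim on the tube at infinity: boundedness of $\overline{M(F)\setminus V}\cap V$ in $\mathbb{R}^n$ does not by itself rule out sequences in $M(F)\setminus V$ that escape to infinity with $F$-values going to $0$, so a curve-selection argument at infinity, together with the codimension-$p$ hypothesis on $V$, is needed to translate Condition (A) into the uniform transversality required to apply Ehresmann. The remaining ingredient requiring care is that the gluing along $\partial A_\delta=\partial N_\delta^\times$ be smoothly compatible, which is ensured by the fact that on the overlap both fibrations come from the same smooth submersion $F$.
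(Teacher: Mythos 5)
Your proposal follows essentially the same route as the paper's proof: the trivial direction (i)$\Rightarrow$(ii) via submersivity, then for (ii)$\Rightarrow$(i) the same decomposition of $S_R^{n-1}\setminus {\rm Lk}^{\infty}(V)$ into the tube $F^{-1}(B_{\delta}^{p}\setminus\{0\})\cap S_R^{n-1}$ (handled by Condition (A) plus Ehresmann, then composed with the radial projection $\pi$) and the compact complement (handled by Condition (B) plus Ehresmann), glued along $F^{-1}(S_{\delta}^{p-1})\cap S_R^{n-1}$. The only cosmetic difference is that you argue the transversality of the tube by a sequential compactness argument where the paper invokes the Curve Selection Lemma, and you correctly identify this as the delicate point.
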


\begin{proof}
\noindent
We follow closely the proof given in \cite{ACT1}. We should say that the same idea of this proof was used in \cite{DAAC}, but to prove a fibration theorem in a more general setting. 

Firstly, see that Condition (B) holds if and only if for all $R\gg 1$ the projection mapping
$$\frac{F}{\|F\|}: S_{R}^{n-1}\setminus {\rm Lk}^{\infty}(V)\rightarrow S^{p-1}$$
is a submersion.

By the Curve Selection Lemma, Condition $(A)$ means that:
there exists $\delta>0$ small enough and a closed disc $B_{\delta}^{p}$ centered at origin such that
$$ F_{|}: F^{-1}(B_{\delta}^{p}-\{0\})\cap S_{R}^{n-1}\rightarrow B_{\delta}^{p}-\{0\}$$
is a surjective proper and smooth submersion.

\vspace{0.2cm}

The implication (i) $\Rightarrow$ (ii) is trivial, because if we suppose that $\frac{F}{\|F\|}: S_{R}^{n-1}\setminus {\rm Lk}^{\infty}(V) \rightarrow S^{p-1}$ is a locally trivial fibration, then it is a submersion and so Condition (B) follows.

\vspace{0.2cm}

Let us prove the converse. We can assume that ${\rm Lk}^{\infty}(V)$ is not empty: in fact, in the case it is empty, Condition (A) is always satisfied and by Condition (B), we have that $\frac{F}{\|F\|}: S_{R}^{n-1} \rightarrow S^{p-1}$ is a proper submersion and so onto, since $S^{p-1}$ is connected. Hence (i) follows by Ehresmann's theorem.

In the case ${\rm Lk}^{\infty}(V)$ is not empty, the projection $\frac{F}{\|F\|}: S_{R}^{n-1}\setminus {\rm Lk}^{\infty}(V) \rightarrow S^{p-1}$ is not proper and we can not use Ehresmann's theorem. Nevertheless, the mapping $ F_{|}: F^{-1}(B_{\delta}^{p}-\{0\})\cap S_{R}^{n-1} \rightarrow B_{\delta}^{p}-\{0\}$ is a
surjective proper submersion. So, by Ehresmann's theorem, the mapping

\begin{equation}\label{eq:fib}
F_{|}: F^{-1}(B_{\delta}^{p}-\{0\})\cap S_{R}^{n-1} \rightarrow B_{\delta}^{p}-\{0\}
\end{equation}
is a locally trivial fibration. Now, we can compose it with the radial projection $\pi_{1}: B_{\delta}^{p}-\{0\} \rightarrow S^{p-1}, $ $\pi_{1} (y)=\frac{y}{\|y\|},$ to get that
\begin{equation}\label{eq:fib1}
\frac{F}{\|F\|}: F^{-1}(B_{\delta}^{p}-\{0\})\cap S_{R}^{n-1} \rightarrow S^{p-1}
\end{equation}
is a locally trivial fibration, as well.

Fibration (\ref{eq:fib}) yields that the map

\begin{equation}\label{eq:fib 3}
\frac{F}{\|F\|}: F^{-1}(S_{\delta}^{p-1})\cap S_{R}^{n-1} \rightarrow S^{p-1}
\end{equation}
is also a locally trivial fibration and therefore surjective.

This implies that
\begin{equation}\label{eq:fib2}
\frac{F}{\|F\|}:  S_{R}^{n-1}\setminus F^{-1}(\mathring{B}_{\delta}^{p})\rightarrow  S^{p-1}
\end{equation}
is surjective and proper, where $\mathring{B}_{\delta}^{p}$ denotes the open disk for some $0<\delta \ll \frac{1}{R}$. So, by using Condition (B), the mapping $\frac{F}{\|F\|}$ is a smooth submersion. Therefore, we have a locally trivial fibration by Ehresmann's theorem.

Now we can glue fibrations \eqref{eq:fib1} and \eqref{eq:fib2} along the common boundary $F^{-1}(S_{\delta}^{p-1})\cap S_{R}^{n-1},$ using fibration (\ref{eq:fib 3}), to get the item $(i)$. \end{proof}

\begin{remark}
As mentioned above, inspired by (the local) Milnor's conditions and the global conditions introduced in \cite{ACT1}, the present authors in \cite{DAAC} introduced two conditions that produce open book structures on $C^{2}$ semi-algebraic manifolds, generalizing and unifying this way open book structures on spheres of big or small radii.
\end{remark}

\subsection{A degree formula in the complex case}

In \cite{Br, HZ, NZ, NZ1} the authors considered a polynomial holomorphic function $f:\bC^{n+1} \to \bC$ under the so-called tame, $M$-tame and semitame conditions. We do not intend to give more details in this direction, but we remind briefly the definitions of these conditions and give some topological results concerning the (global) Milnor fibers.

If one fixes coordinates $z=(z_{0},\cdots, z_{n}),$ then one can consider the following quotient algebra 

$$\mathcal{Q}_{f}=\displaystyle{\frac{\mathbb{C}[z_{0},\ldots, z_{n}]}{(\frac{\partial f}{\partial z_{0}},\ldots, \frac{\partial f}{\partial z_{n}})}},$$ 

\noindent where $\mathbb{C}[z_{0},\ldots, z_{n}]$ denotes the ring of polynomial holomorphic functions and $(\frac{\partial f}{\partial z_{0}},\ldots, \frac{\partial f}{\partial z_{n}})$ denotes the ideal spanned by the partial derivatives of $f.$

It is well known that $f$ has only finitely many isolated singular points if and only if the complex dimension of the quotient algebra $\dim_{\mathbb{C}}\mathcal{Q}_{f}$ is finite. This dimension is called the total Milnor number and it is denoted as in the local case by $\mu_{f}.$ As in the local case, the number $\mu_{f}$ is equal to the topological degree, denote by ${\rm deg}_{\infty} \nabla f,$ of the normalized gradient vector field
\begin{equation}
R \frac{\nabla f}{\| \nabla f \|} :S_{R} ^{2n+1}  \rightarrow S_{R}^{2n+1}.
\end{equation} 

Denote by $\overline{\nabla}(f)=\displaystyle{(\overline{\frac{\partial f}{\partial z_{0}}},\ldots, \overline{\frac{\partial f}{\partial z_{n}}})}$ and the so-called (complex) Milnor set by $M(f)=\{z\in \mathbb{C}^{n+1}; \overline{\nabla}(f)=\lambda z\}$ for some $\lambda \in \mathbb{C}.$


Under the notations above, consider the following condition $(*):$ for every sequence $\{z_{k}\} \subset M(f)$ 
such that $\|z_{k}\|\to +\infty,$  $\overline{\nabla}(f)(z_{k})\to 0.$

\begin{definition} 

We say that a polynomial function $f$ under condition $(*)$ is:
\begin{enumerate}
\item [(i)] $M$-tame if  $|f(z_{k})|\to +\infty.$

\item [(ii)] semitame if  $|f(z_{k})|\to c,$ for some $c\in \mathbb{C},$ then $c=0.$
\end{enumerate}
\end{definition}

Of course, if $f$ is $M$-tame, then it is semitame. But, the converse is not true, see for instance \cite{NZ}.

\begin{theorem} [\cite{NZ, NZ1} see also \cite{HZ}]

Let $f:\bC^{n+1} \to \bC$ be a $M$-tame holomorphic polynomial function. Then, the following holds true:

\begin{enumerate}

\item [(i)] the bifurcation $B_f$ is equal to the singular locus $\Sigma_{f}.$

\item [(ii)] for each value $w\in \mathbb{C}\setminus B_{f},$ the (typical) fiber $f^{-1}(w)$ has the homotopy type of a bouquet of $\mu_{f}$ $n$-dimensional spheres.

\item [(iii)] the fibration (\ref{globalmilnor}) is fiber-equivalent to the global monodromy fibration of $f.$


\end{enumerate} 

\end{theorem}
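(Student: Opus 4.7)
\proof
The plan is to treat the three items in order, using $M$-tameness to rule out pathological behavior at infinity in each case. \emph{For item (i),} the inclusion $f(\Sigma_f) \subset B_f$ is standard, so the content is that $B_f$ contains no value coming from bifurcation at infinity. I would argue by contradiction: if $w_0 \in B_f$ is not a critical value, then the failure of local triviality of $f$ near $w_0$ must come from non-properness at infinity. A Curve Selection Lemma at infinity applied to an appropriate semi-algebraic set (encoding ``$\overline{\nabla}(f)(z)$ nearly proportional to $z$ and $|f(z)-w_0|<\epsilon$'') produces a real analytic arc $\gamma(t)$ with $\|\gamma(t)\| \to \infty$, $f(\gamma(t)) \to w_0$, and $\gamma(t) \in M(f)$ for $t$ small. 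Condition $(*)$ then forces $\overline{\nabla}(f)(\gamma(t)) \to 0$, and the $M$-tameness upgrade forces $|f(\gamma(t))| \to +\infty$, contradicting $f(\gamma(t)) \to w_0 \in \bC$.

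\emph{For item (ii),} since $\mu_f<\infty$ the critical set of $f$ is finite. I would use a generic perturbation $f_c = f - c\ell$ (with $\ell$ a generic linear form and $c$ small) to obtain a Morsification with exactly $\mu_f$ non-degenerate critical points; $M$-tameness ensures no critical point escapes to infinity under this perturbation. Morse theory on $\bC^{n+1}$ then assembles the generic fiber via Lefschetz thimbles attached to the $\mu_f$ vanishing cycles as a bouquet $\bigvee^{\mu_f} S^n$. An alternative route first computes $\chi(f^{-1}(w)) = 1 + (-1)^n \mu_f$ via a Khimshiashvili-type degree formula on $S_R^{2n+1}$ using the identity $\mu_f = \deg_{\infty}\nabla f$, and then combines this with the $(n-1)$-connectivity of the fiber obtained from a Lefschetz hyperplane section argument.

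\emph{For item (iii),} I would compare the two fibrations via a Milnor-type flow argument at infinity. First, $M$-tameness implies that for $R \gg 1$ the analogue of Condition (B) of Theorem \ref{t:sing} holds in the complex setting, so $f/\|f\|: S_R^{2n+1}\setminus\{f=0\} \to S^1$ is a locally trivial fibration, fiber-equivalent (by the gluing argument in the proof of Theorem \ref{t:sing}) to the tube fibration $f: f^{-1}(S_\delta^1)\cap B_R^{2n+2} \to S_\delta^1$ for $0<\delta\ll 1/R$. Next, by item (i), $B_f$ is finite, so for $R \gg 1$ the annulus $\{\delta \le |w| \le R\}$ is free of bifurcation values; an outward radial flow on this annulus (again well-defined thanks to $M$-tameness, which controls the fibers en route) identifies the tube fibration with the monodromy fibration over $S_R^1$.

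The main obstacle I foresee is step (i): extracting an arc that genuinely lies in $M(f)$, so that the exact Lagrange-multiplier condition $\overline{\nabla}(f) = \lambda z$ is satisfied in the limit. This requires a careful choice of the semi-algebraic set together with \L{}ojasiewicz-type inequalities at infinity, so that the proportionality between $\overline{\nabla}(f)$ and $z$ is preserved by passing to the limit along $\gamma$.
\endproof
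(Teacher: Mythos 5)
The paper does not actually prove this theorem: it is quoted from \cite{NZ,NZ1} (see also \cite{Br,HZ}), so there is no in-paper argument to compare yours against, and I can only measure your outline against the proofs in those references. For items (i) and (iii) your plan is essentially theirs: $M$-tameness plus the Curve Selection Lemma at infinity applied to the semi-algebraic Milnor set $M(f)$ gives transversality of the fibers $f^{-1}(w)$ to all sufficiently large spheres, uniformly for $w$ in any bounded region; this yields local triviality of $f$ over $\mathbb{C}\setminus f(\Sigma_f)$, hence $B_f\subseteq f(\Sigma_f)$ (the reverse inclusion again uses the no-loss-at-infinity control, via the jump of the Euler characteristic at a critical value), and Milnor's inflation argument then identifies the sphere fibration (\ref{globalmilnor}) with the tube and monodromy fibrations. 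The obstacle you flag at the end of your proposal is not a real one: you apply curve selection directly to the semi-algebraic set $M(f)\cap f^{-1}(U)$ for $U$ a small neighbourhood of $w_0$, rather than to an approximate version of the Lagrange condition, so nothing needs to be ``preserved in the limit.''

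The genuine gap is in item (ii), in your first route. A generic linear perturbation $f-c\ell$ of an $M$-tame polynomial need not remain tame, and the assertion that ``$M$-tameness ensures no critical point escapes to infinity under this perturbation'' is precisely the delicate point: constancy of the total Milnor number in such one-parameter families is the subject of \cite{HZ} and fails in general, because Milnor number can be lost at infinity. The cited proofs avoid global Morsification entirely; they establish the bouquet structure by a global Lefschetz-type induction on generic hyperplane sections, showing the generic fiber is obtained from a generic hyperplane slice of itself by attaching $n$-cells, which gives $(n-1)$-connectivity and that $H_n$ is free of rank $\mu_f$, whence the bouquet by Hurewicz and Whitehead. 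Your second route is essentially this one, but the clause ``$(n-1)$-connectivity obtained from a Lefschetz hyperplane section argument'' is where all the work lives, and it in turn requires showing that generic hyperplane restrictions of an $M$-tame polynomial are again well behaved at infinity; as written, that is assumed rather than proved.
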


So, it follows from item $[(ii)]$ above that for such $M$-tame class of polynomial we still have the following Poincaré-Hopf type formula $$\chi(F_{\theta})=1+(-1)^{n}\mu_{f},$$ where $F_{\theta}$ is the fiber of fibration (\ref{globalmilnor}). 

\subsection{Degree formulae in the real case}
Inspired by the formulae proved in local case and in the global complex case, we present below a degree formula for the Euler characteristic of the real Milnor fiber in a big sphere. The strategy is the same as in the local case. First we need to establish global versions of Szafraniec's theorem \cite{Sz1}. Note that the results below are new.

\subsubsection{On the link at infinity of a closed semi-algebraic set} We establish several degree formulae for the Euler characteristic of the link at infinity of a closed semi-algebraic set. If $X \subset \mathbb{R}^n$ is a semi-algebraic set, then we denote by ${\rm Lk}^\infty(X)$ its link at infinity.

Let $f : \mathbb{R}^n \rightarrow \mathbb{R}$ be a $C^2$-semi-algebraic function. We assume that $0$ is a critical value of $f$. We denote by $\Sigma_f= (\nabla f)^{-1}(0)$ the set of critical of points of $f$.
We remark that $\Sigma_f \cap f^{-1}(0)$ may not be compact.

Let $\omega(x)= 1 + \frac{1}{2}(x_1^2+\cdots+x_n^2)$. Note that $\nabla \omega(x)=x$ and $\omega(x) \ge 1$.
Let 
$$\Gamma_{f,\omega} = \left\{ x \in \mathbb{R}^n \ \vert \ {\rm rank} [\nabla f(x), \nabla \omega(x) ] < 2 \right\}.$$
We have $\Sigma_f \subset \Gamma_{f,\omega}$. 
\begin{lemma}\label{first}
There is $k \in \mathbb{N}$ such that for all $x \in \Gamma_{f,\omega} \setminus f^{-1}(0)$, 
$$\vert f(x) \vert > \frac{1}{\omega(x)^k},$$
for $\vert x \vert \gg 1$.
\end{lemma}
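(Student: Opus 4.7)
The plan is to reduce the claim to a Puiseux-style analysis of the image $E := \Phi(\Gamma_{f,\omega}\setminus f^{-1}(0))$ of the semi-algebraic $C^1$ map $\Phi=(\omega,f):\mathbb{R}^n\to\mathbb{R}^2$. First I would observe that, at every point of $\Gamma_{f,\omega}$, the Jacobian matrix $d\Phi$ (whose rows are $\nabla\omega(x)=x$ and $\nabla f(x)$) has rank at most one by the very definition of the set; so $\Gamma_{f,\omega}$ is exactly the critical locus of $\Phi$. Stratifying $\Gamma_{f,\omega}$ into smooth semi-algebraic strata $S_\alpha$, the restriction $\Phi\vert_{S_\alpha}$ has rank at most one everywhere (since the ambient rank bound passes to the restriction), so by a stratified version of the constant-rank theorem each $\Phi(S_\alpha)$ has dimension at most $1$. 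Taking the finite union, $\Phi(\Gamma_{f,\omega})$ is a semi-algebraic subset of $\mathbb{R}^2$ of dimension at most $1$, and hence so is $E$, which in addition is contained in $\{v\neq 0\}$.

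Next I would analyze the ends of $E$ in the direction $u\to+\infty$. By the structure theorem for one-dimensional semi-algebraic subsets of $\mathbb{R}^2$, there exists $R_0>0$ such that $E\cap\{u>R_0\}$ is a finite disjoint union of connected semi-algebraic arcs. Substituting $u=1/s$ and invoking the classical Puiseux theorem at $s=0^+$, each such arc admits a parametrization
$$v \;=\; c_j\, u^{a_j} \;+\; O\!\left(u^{a_j-\delta_j}\right), \qquad c_j\neq 0,\ a_j\in\mathbb{Q},\ \delta_j>0,$$
valid for $u$ sufficiently large. On arcs with $a_j\ge 0$, $|v|$ stays bounded below away from $0$ at infinity, so the inequality $|v|>u^{-k}$ is trivial for any $k\ge 1$. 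On arcs with $a_j<0$, one has $|v|\ge \tfrac{|c_j|}{2}\,u^{a_j}$ for $u$ large, so $|v|>u^{-k}$ as soon as the integer $k$ exceeds $-a_j$. Taking $k\in\mathbb{N}$ strictly larger than $\max_j(-a_j)$ (a finite maximum), I obtain $R\ge R_0$ such that $|v|>u^{-k}$ uniformly on $E\cap\{u>R\}$. Translating back through $\Phi$, every $x\in\Gamma_{f,\omega}\setminus f^{-1}(0)$ with $\omega(x)>R$ satisfies $|f(x)|>\omega(x)^{-k}$; and since $\omega(x)=1+\tfrac{1}{2}\|x\|^2\to+\infty$ as $\|x\|\to\infty$, the inequality holds for $|x|\gg 1$, as required.

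The main technical obstacle is not the Puiseux step (which is classical) but the control on $\Phi(\Gamma_{f,\omega})$: the classical Morse--Sard theorem would demand $f\in C^{n-1}$, which is not available here, so one must rely on the semi-algebraic (o-minimal) version of the fact that the image of the critical set of a $C^1$ semi-algebraic map has dimension strictly less than that of the target. Once this dimension bound and the one-dimensional structure theorem for semi-algebraic subsets of $\mathbb{R}^2$ are invoked carefully for the merely $C^2$-regular $f$, the rest of the argument is bookkeeping.
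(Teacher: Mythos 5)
Your argument is correct, but it takes a different technical route from the paper's. The paper works with a single one\--variable function: setting $\tilde{S}_r=\omega^{-1}(r)$, it defines $\beta(r)=\inf\{\vert f(x)\vert : x\in \tilde{S}_r\cap(\Gamma_{f,\omega}\setminus f^{-1}(0))\}$, observes that $\beta$ is semi-algebraic (Tarski--Seidenberg) and strictly positive (every nonzero value of $f$ on $\Gamma_{f,\omega}\cap\tilde{S}_r$ is a critical value of $f_{\vert \tilde{S}_r}$, of which there are finitely many), and then applies the growth bound for one\--variable semi-algebraic functions to $1/\beta$ to get $\beta(r)>r^{-k}$ for $r\gg 1$. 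You instead push the critical locus forward under $\Phi=(\omega,f)$, bound the dimension of the image $E$ by $1$, and expand its branches at infinity. Both routes rest on the same underlying fact --- Puiseux\--type asymptotics of semi-algebraic data in one variable --- and your ``finitely many branches of $E$ over $u\gg 1$'' plays exactly the role of the paper's ``finitely many critical values of $f_{\vert\tilde{S}_r}$.'' The paper's version is shorter because $\beta$ is semi-algebraic in one stroke, whereas you need two extra ingredients stated carefully: first, ``rank $\le 1$'' is not constant rank, so you should either refine the stratification until the rank is constant on each stratum or simply invoke the semi-algebraic Sard theorem for $C^1$ semi-algebraic maps (as in [BCR]); second, before applying Puiseux you need a cylindrical decomposition of $E$ adapted to the $u$-coordinate to know that, beyond some $R_0$, the unbounded pieces of $E$ are graphs $v=\xi_i(u)$ of nonvanishing semi-algebraic functions (ruling out, e.g., vertical arcs). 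With those two points made precise, your proof is complete; they are presentational refinements, not gaps.
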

\proof Let $r_0$ be the greatest critical value of $\omega$. We set $\tilde{S}_r = \omega^{-1}(r)$. Let $\beta : ]r_0,+\infty [ \rightarrow \mathbb{R}$ be defined by 
$$\beta (r) = {\rm inf} \left\{ \vert f(x) \vert \ \vert \ x \in \tilde{S}_r \cap (\Gamma_{f,\omega} \setminus f^{-1}(0)) \right\}.$$
The function $\beta$ is semi-algebraic. Furthermore $\beta >0$ because for $r > r_0$, $f_{\vert \tilde{S}_r}$ has a finite number of critical values. Thus the function $\frac{1}{\beta}$ is also semi-algebraic. Hence there exist $r_1 \ge r_0$ and $k_0 \in \mathbb{N}$ such that $\frac{1}{\beta} < r^k$, for $r \ge r_1$ and $k \ge k_0$. This implies that $\beta (r) > \frac{1}{r^k}$ for $r \ge r_1$ and $k \ge k_0$. We can conclude that for $r \ge r_1$ and $k \ge k_0$,
$$\vert f(x) \vert > \frac{1}{\omega(x)^k},$$
for $ x \in \tilde{S}_r \cap ( \Gamma_{f,\omega} \setminus f^{-1}(0) ).$ \endproof

Let $g_-(x) =f(x)-\frac{1}{\omega(x)^k}$. Note that $\Gamma_{f,\omega}=\Gamma_{g_-,\omega}$.
\begin{lemma}
For $R \gg 1$, $\chi ( \{ g_- \le 0 \} \cap \tilde{S}_R ) = \chi ( \{ f \le 0 \} \cap \tilde{S}_R )$.
\end{lemma}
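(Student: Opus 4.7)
The plan is to exploit that on the sphere $\tilde{S}_R = \omega^{-1}(R)$ the function $\omega$ is constantly equal to $R$, which turns $g_-$ into a constant shift of $f$: for every $x \in \tilde{S}_R$ one has $g_-(x) = f(x) - 1/R^k$, so
\[
\{g_- \le 0\} \cap \tilde{S}_R \;=\; \{f \le 1/R^k\} \cap \tilde{S}_R.
\]
The lemma thus reduces to proving $\chi(\{f \le 0\} \cap \tilde{S}_R) = \chi(\{f \le 1/R^k\} \cap \tilde{S}_R)$.

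Next I would control the critical values of the restriction $f|_{\tilde{S}_R}$. For $R$ greater than the greatest critical value $r_0$ of $\omega$, the gradient $\nabla\omega(x)=x$ is a nonzero normal vector to $\tilde{S}_R$ at each of its points, so $x$ is critical for $f|_{\tilde{S}_R}$ precisely when $x \in \Gamma_{f,\omega}\cap\tilde{S}_R$. For $R\gg 1$ every such $x$ also satisfies $|x|\gg 1$, and Lemma~\ref{first} then yields that at any such critical point either $f(x)=0$ or $|f(x)|>1/\omega(x)^k = 1/R^k$. Consequently, $f|_{\tilde{S}_R}$ has no critical value in the interval $(0,1/R^k]$.

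The final step is a standard Morse-theoretic argument on the compact manifold $\tilde{S}_R$: since no critical value of $f|_{\tilde{S}_R}$ lies in $(0, 1/R^k]$, the family $\{f \le c\} \cap \tilde{S}_R$ should be topologically constant for $c$ in this interval. Concretely, one integrates the semi-algebraic vector field $-\nabla_{\tilde{S}_R} f / \|\nabla_{\tilde{S}_R} f\|^2$ on the critical-free region $\{0 < f \le 1/R^k\} \cap \tilde{S}_R$: its trajectories decrease $f$ at unit speed and, by a \L{}ojasiewicz-type argument for the semi-algebraic function $f|_{\tilde{S}_R}$, each one converges in finite time to a single limit point on $\{f = 0\}\cap\tilde{S}_R$. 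This yields a deformation retraction of $\{f \le 1/R^k\} \cap \tilde{S}_R$ onto $\{f \le 0\} \cap \tilde{S}_R$ and hence equality of Euler characteristics.

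The main subtlety lies precisely in this passage across the critical level $\{f=0\}$: trajectories may accumulate at singular points of $\{f=0\} \cap \tilde{S}_R$, so the retraction is only continuous and not smooth there. Continuity nevertheless suffices to equate Euler characteristics of compact semi-algebraic sets, which gives the lemma.
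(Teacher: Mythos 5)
Your proposal is correct and follows essentially the same route as the paper: identify $\{g_-\le 0\}\cap\tilde S_R$ with $\{f\le 1/R^k\}\cap\tilde S_R$ using that $\omega\equiv R$ on $\tilde S_R$, invoke Lemma~\ref{first} to rule out critical points of $f_{|\tilde S_R}$ with values in $(0,1/R^k]$, and conclude by a deformation retraction onto $\{f\le 0\}\cap\tilde S_R$. The only difference is that the paper simply asserts the existence of this retraction, whereas you spell out its construction via the normalized gradient flow and a \L{}ojasiewicz-type convergence argument.
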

\proof Let $R \gg 1$ be such that for all $x \in \Gamma_{f,\omega} \setminus f^{-1}(0) \cap \{\omega(x) \ge R\}$,  $\vert f(x) \vert > \frac{1}{\omega(x)^k}$.
Let $N_f^\le = \{ x \in \tilde{S}_R \ \vert \ f(x) \le 0 \}$ and $N_{g_-}^\le = \{ x \in \tilde{S}_R \ \vert \ g_-(x) \le 0 \}$. For $x \in \tilde{S}_R$, we have 
$$g_-(x) \le 0 \Leftrightarrow f(x)-\frac{1}{R^k} \le 0 \Leftrightarrow f(x) \le \frac{1}{R^k},$$
and so $N_f^\le \subset N_{g_-}^\le$. Furthermore if $0 < f(x) \le \frac{1}{R^k}$ then $x \notin \Gamma_{f,\omega} \setminus f^{-1}(0)$ and therefore $\{ f(x) \le \frac{1}{R^k} \} \cap \tilde{S}_R$ retracts by deformation to $\{ f(x) \le 0 \} \cap \tilde{S}_R$. 
We get the result. \endproof

\begin{corollary}
We have $\chi ({\rm Lk}^\infty (\{g_- \le 0 \}) ) = \chi  ({\rm Lk}^\infty (\{f \le 0 \}) ) $.
\end{corollary}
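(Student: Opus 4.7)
The corollary is essentially a reformulation of the previous lemma in terms of the link at infinity. The plan is to observe that the hypersurface $\tilde{S}_R = \omega^{-1}(R)$ used in the lemma is, up to a change of variable in $R$, a standard Euclidean sphere, and that intersection with such a sphere of large radius computes the link at infinity of a closed semi-algebraic set.

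First I would write $\omega(x) = 1 + \tfrac{1}{2}\|x\|^{2}$, so that $\tilde{S}_{R} = \{x\in\bR^{n} : \|x\|^{2} = 2(R-1)\} = S^{n-1}_{r(R)}$, where $r(R) = \sqrt{2(R-1)}$. Thus for $R\gg 1$, $\tilde{S}_{R}$ is nothing but a Euclidean sphere of arbitrarily large radius, and $R\to +\infty$ corresponds to $r(R)\to +\infty$.

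Next I would invoke semi-algebraic triviality at infinity (Hardt's theorem) applied to the closed semi-algebraic sets $\{f\le 0\}$ and $\{g_{-}\le 0\}$: there exists $R_{1}\gg 1$ such that for every $R\ge R_{1}$, the intersections $\{f\le 0\}\cap S^{n-1}_{r(R)}$ and $\{g_{-}\le 0\}\cap S^{n-1}_{r(R)}$ are semi-algebraically homeomorphic to the respective links at infinity ${\rm Lk}^{\infty}(\{f\le 0\})$ and ${\rm Lk}^{\infty}(\{g_{-}\le 0\})$. In particular, their Euler characteristics coincide with those of the corresponding links. (Strictly speaking $\{g_{-}\le 0\}$ is a semi-algebraic set defined by the smooth semi-algebraic function $g_{-}$, so this applies.)

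Finally I would combine this identification with the previous lemma: for $R\gg 1$,
\[
\chi\bigl({\rm Lk}^{\infty}(\{g_{-}\le 0\})\bigr) = \chi\bigl(\{g_{-}\le 0\}\cap \tilde{S}_{R}\bigr) = \chi\bigl(\{f\le 0\}\cap \tilde{S}_{R}\bigr) = \chi\bigl({\rm Lk}^{\infty}(\{f\le 0\})\bigr),
\]
which is the desired conclusion. The main (minor) obstacle is confirming that semi-algebraic triviality at infinity is applicable here in the $\omega$-level-set picture, which amounts to the trivial remark that $\tilde{S}_{R}$ is genuinely a standard sphere.
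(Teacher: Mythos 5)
Your proposal is correct and is exactly the argument the paper intends: the corollary is stated without proof as an immediate consequence of the preceding lemma, once one notes that $\tilde{S}_R=\omega^{-1}(R)$ is a Euclidean sphere of radius $\sqrt{2(R-1)}$ and that, by semi-algebraic triviality at infinity, intersecting a closed semi-algebraic set with such a sphere of sufficiently large radius realizes its link at infinity.
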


$\hfill \Box$

Let $g_+(x) =f(x)+\frac{1}{\omega(x)^k}$. Note that $\Gamma_{f,\omega}=\Gamma_{g_+,\omega}$.
\begin{lemma}
For $R \gg 1$, $\chi ( \{ g_+ \ge 0 \} \cap \tilde{S}_R ) = \chi ( \{ f \ge 0 \} \cap \tilde{S}_R )$.
\end{lemma}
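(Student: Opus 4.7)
The plan is to run the exact mirror of the argument used for $g_-$. First I would fix $R\gg 1$ so that the conclusion of Lemma \ref{first} holds on $\{\omega(x)\ge R\}$, namely $|f(x)|>\frac{1}{\omega(x)^k}$ for every $x\in \Gamma_{f,\omega}\setminus f^{-1}(0)$ with $\omega(x)\ge R$. Since $\omega\equiv R$ on $\tilde{S}_R$, the defining inequality rewrites as
$$g_+(x)\ge 0 \iff f(x)\ge -\frac{1}{R^k},$$
so setting $N_f^{\ge}:=\{x\in \tilde{S}_R \mid f(x)\ge 0\}$ and $N_{g_+}^{\ge}:=\{x\in \tilde{S}_R \mid g_+(x)\ge 0\}$, one immediately gets $N_f^{\ge}\subset N_{g_+}^{\ge}$.

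The core step is to upgrade this inclusion to a homotopy equivalence, which will yield equality of Euler characteristics. I would argue that $f_{|\tilde{S}_R}$ has no critical point in the strip $\{-\frac{1}{R^k}\le f<0\}\cap \tilde{S}_R$: indeed, a critical point of $f_{|\tilde{S}_R}$ is by definition an element of $\Gamma_{f,\omega}$; if moreover $-\frac{1}{R^k}\le f(x)<0$, then $0<|f(x)|\le \frac{1}{R^k}=\frac{1}{\omega(x)^k}$, contradicting our choice of $R$ via Lemma \ref{first}. With no critical points in the strip, the (semi-algebraic) gradient flow of $f_{|\tilde{S}_R}$ on the compact manifold $\tilde{S}_R$ provides a deformation retraction of $\{f\ge -\frac{1}{R^k}\}\cap \tilde{S}_R$ onto $\{f\ge 0\}\cap \tilde{S}_R$, from which the equality $\chi(N_{g_+}^{\ge})=\chi(N_f^{\ge})$ follows.

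I do not expect any real obstacle. The situation is perfectly symmetric to the preceding lemma because Lemma \ref{first} gives a lower bound on $|f|$ that is insensitive to the sign of $f$, so the same exponent $k$ and threshold radius serve both statements simultaneously. The only point I would want to double-check is that the gradient flow argument is genuinely applicable in the semi-algebraic category on the compact manifold $\tilde{S}_R$ (using properness of $\omega$ to ensure $\tilde{S}_R$ is compact), but this is routine and identical to what is implicitly used in the $g_-$ case just above.
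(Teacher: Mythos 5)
Your proposal is correct and follows essentially the same route as the paper: fix $R$ via Lemma \ref{first}, observe that $g_+\ge 0$ on $\tilde S_R$ means $f\ge -\frac{1}{R^k}$, note that the strip $\{-\frac{1}{R^k}\le f<0\}\cap\tilde S_R$ contains no points of $\Gamma_{f,\omega}$, and conclude by a deformation retraction of $\{f\ge -\frac{1}{R^k}\}\cap\tilde S_R$ onto $\{f\ge 0\}\cap\tilde S_R$. The paper simply asserts the retraction where you make the gradient-flow mechanism explicit; there is no substantive difference.
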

\proof Let $R \gg 1$ be such that for all   $x \in \Gamma_{f,\omega} \setminus f^{-1}(0) \cap \{\omega(x) \ge R \}$, $\vert f(x) \vert > \frac{1}{\omega(x)^k}$. 
Let $N_f^\ge = \{ x \in \tilde{S}_R \ \vert \ f(x) \ge 0 \}$ and $N_{g_+}^\ge = \{ x \in \tilde{S}_R \ \vert \ g_+(x) \ge 0 \}$. For $x \in \tilde{S}_R$, we have 
$$g_+(x) \ge 0 \Leftrightarrow f(x)+\frac{1}{R^k} \ge 0 \Leftrightarrow f(x) \ge -\frac{1}{R^k},$$
and so $N_f^\ge \subset N_{g_+}^\ge$. Furthermore if $0  > f(x) \ge -\frac{1}{R^k}$ then $x \notin \Gamma_{f,\omega} \setminus f^{-1}(0)$ and therefore $\{ f(x) \ge - \frac{1}{R^k} \} \cap \tilde{S}_R$ retracts by deformation to $\{ f(x) \ge 0 \} \cap \tilde{S}_R$. 
We get the result. \endproof

\begin{corollary}
We have $\chi ({\rm Lk}^\infty (\{g_+ \ge 0 \}) ) = \chi  ({\rm Lk}^\infty (\{f \ge 0 \}) ) $.
\end{corollary}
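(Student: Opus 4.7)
The plan is to deduce the corollary directly from the preceding lemma; the only remaining work is to identify the Euler characteristics of the slices by $\tilde S_R$ with the Euler characteristics of the links at infinity of the two closed semi-algebraic sets $\{g_+\ge 0\}$ and $\{f\ge 0\}$.

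First I would recall that, for any closed semi-algebraic set $Y\subset \mathbb{R}^n$, the link at infinity ${\rm Lk}^\infty(Y)$ is well-defined up to homeomorphism as $Y\cap S_R^{n-1}$ for any $R\gg 1$. This is a standard consequence of Hardt's semi-algebraic triviality theorem applied to the restriction of the norm to $Y$: outside a sufficiently large ball, the pair $(\mathbb{R}^n,Y)$ admits a semi-algebraic trivialisation along the radial direction.

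Next I would observe that the same conclusion persists with $S_R^{n-1}$ replaced by the level hypersurface $\tilde S_R=\omega^{-1}(R)$, since $\omega(x)=1+\tfrac{1}{2}\|x\|^2$ is a proper positive semi-algebraic function whose only critical value is $1$, so $\tilde S_R$ is diffeomorphic to $S^{n-1}$ for every $R>1$. Applying Hardt's theorem to $\omega_{|Y}$ in place of $\|\cdot\|_{|Y}$ yields that $Y\cap \tilde S_R$ is homeomorphic to $Y\cap S_{R'}^{n-1}$ for any $R,R'$ large enough, and in particular
$$\chi(Y\cap\tilde S_R)=\chi({\rm Lk}^\infty(Y)).$$

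Finally I would apply this identification with $Y=\{g_+\ge 0\}$ and then with $Y=\{f\ge 0\}$, and invoke the preceding lemma for the middle equality, yielding
$$\chi({\rm Lk}^\infty(\{g_+\ge 0\}))=\chi(\{g_+\ge 0\}\cap\tilde S_R)=\chi(\{f\ge 0\}\cap\tilde S_R)=\chi({\rm Lk}^\infty(\{f\ge 0\}))$$
for $R\gg 1$, which is the claim. The only substantive point is the invariance of the link at infinity under replacing round spheres by level sets of a proper positive semi-algebraic function with no critical value beyond a fixed level; this is essentially a one-line consequence of semi-algebraic triviality at infinity, so I do not expect any genuine obstacle.
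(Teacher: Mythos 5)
Your proposal is correct and matches the paper's (implicitly omitted) argument: the corollary is stated as an immediate consequence of the preceding lemma, with the link at infinity identified with the slice $\{\,\cdot\,\}\cap\tilde S_R$ for $R\gg 1$. Note that the identification is even simpler than you suggest, since $\tilde S_R=\omega^{-1}(R)$ is itself a round sphere of radius $\sqrt{2(R-1)}$, so no separate appeal to Hardt's theorem for general level sets is needed beyond the standard well-definedness of the link at infinity.
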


$\hfill \Box$

\begin{lemma}
The sets $(\nabla g_-)^{-1}(0) \cap \{g_-=0 \}$ and $(\nabla g_+)^{-1}(0) \cap \{g_+=0 \}$ are compact.
\end{lemma}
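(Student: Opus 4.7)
The plan is to reduce both statements to Lemma \ref{first}. Each of the two sets is the intersection of the zero locus of a continuous map with the zero locus of a continuous function, hence is closed in $\bR^n$, so the only real issue is boundedness; once boundedness is established, closedness forces compactness.

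First I would compute the gradients explicitly. Using $\nabla \omega(x)=x$ and the chain rule one obtains
$$\nabla g_\pm(x)=\nabla f(x)\pm \frac{k}{\omega(x)^{k+1}}\, x.$$
Consequently, at any point where $\nabla g_\pm(x)=0$, the vector $\nabla f(x)$ is a scalar multiple of $\nabla \omega(x)=x$, so $x$ lies in $\Gamma_{f,\omega}$ (this recovers the equalities $\Gamma_{g_-,\omega}=\Gamma_{g_+,\omega}=\Gamma_{f,\omega}$ already recorded in the excerpt).

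Next I would exploit the defining equation $g_\pm(x)=0$, which reads $f(x)=\mp \frac{1}{\omega(x)^k}$. This has two consequences: it shows $f(x)\neq 0$, so $x\notin f^{-1}(0)$, and it pins down $|f(x)|$ exactly as
$$|f(x)|=\frac{1}{\omega(x)^k}.$$
Combining the two paragraphs, every point $x$ in either set belongs to $\Gamma_{f,\omega}\setminus f^{-1}(0)$ and satisfies the above equality.

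This is flatly incompatible with Lemma \ref{first}, which provides an integer $k$ such that the strict inequality $|f(x)|>\frac{1}{\omega(x)^k}$ holds on $\Gamma_{f,\omega}\setminus f^{-1}(0)$ as soon as $|x|\gg 1$. Fixing this $k$ in the very definition of $g_\pm$, any point of $(\nabla g_\pm)^{-1}(0)\cap\{g_\pm=0\}$ with large norm would contradict Lemma \ref{first}, so both sets must be contained in some bounded ball, and hence are compact. I do not expect any genuine obstacle here: the argument is essentially a one-line reading of Lemma \ref{first}, and the only things worth double-checking are the sign in the computation of $\nabla(\omega(x)^{-k})$ and the compatibility of the choice of $k$ between Lemma \ref{first} and the definition of $g_\pm$.
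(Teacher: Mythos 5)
Your argument is correct and is essentially the paper's own proof: you show any point of either set lies in $\Gamma_{f,\omega}\setminus f^{-1}(0)$ and satisfies $\vert f(x)\vert=\frac{1}{\omega(x)^k}$, which contradicts Lemma \ref{first} for $\vert x\vert \gg 1$, so the (closed) sets are bounded. The only nit is that the sign in your gradient formula should be $\mp$ rather than $\pm$ (since $\nabla(\omega^{-k})=-k\omega^{-k-1}x$), but this does not affect the conclusion that $\nabla f(x)$ is proportional to $x$ at such points.
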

\proof If $x \in (\nabla g_-)^{-1}(0) \cap \{g_-=0 \}$ then $x \in \Gamma_{g_-, \omega}= \Gamma_{f,\omega}$ and $f(x)=\frac{1}{\omega(x)^k} \not= 0$. So $x \in \Gamma_{f,\omega} \setminus f^{-1}(0)$. But there exists $R \ge r_1$ such that 
$\vert f(x) \vert > \frac{1}{\omega(x)^k}$ if $\rho(x) \ge R$ and $ x \in \Gamma_{f,\omega} \setminus f^{-1}(0)$. We conclude that $$\nabla ( g_-)^{-1}(0) \cap \{g_-=0 \} \subset \tilde{B}_R,$$ where $\tilde{B}_R = \{\omega \le R \}$. \endproof

We make the assumption that $f(0) > 1$. 
\begin{lemma}
We have $g_-(0) >0$ and $g_+(0) >0$.
\end{lemma}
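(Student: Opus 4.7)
The plan is to simply evaluate $\omega$ at the origin and substitute into the definitions of $g_-$ and $g_+$. By construction, $\omega(x)=1+\tfrac{1}{2}(x_1^2+\cdots+x_n^2)$, so $\omega(0)=1$ and hence $\frac{1}{\omega(0)^k}=1$ for every integer $k$.

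Therefore, plugging $x=0$ into the definitions,
\begin{equation*}
g_-(0) = f(0) - \frac{1}{\omega(0)^k} = f(0) - 1, \qquad g_+(0) = f(0) + \frac{1}{\omega(0)^k} = f(0) + 1.
\end{equation*}
By the standing hypothesis $f(0) > 1$, we obtain $g_-(0) = f(0)-1 > 0$ and $g_+(0) = f(0)+1 > 2 > 0$, which is exactly the claim.

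There is no real obstacle here: the statement is just a numerical check at the origin, and the assumption $f(0)>1$ is tailored precisely so that the subtraction in $g_-$ does not produce a nonpositive value. The role of this lemma is presumably to guarantee that $0$ lies in the positivity regions $\{g_-\ge 0\}$ and $\{g_+\ge 0\}$, so that the links at infinity computed in the previous corollaries correspond to nonempty sets containing the origin, setting up the upcoming degree formula in the spirit of Szafraniec's argument.
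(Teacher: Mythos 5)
Your computation is exactly the paper's proof: evaluate $\omega(0)=1$, so $g_-(0)=f(0)-1$ and $g_+(0)=f(0)+1$, and conclude from the standing assumption $f(0)>1$. Correct and identical in approach.
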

\proof We have $g_-(0)=f(0)-1$ and $g_+ (0)=f(0) +1$. \endproof
Let $H_-$ and $H_+ : \mathbb{R}^{n+1} \rightarrow \mathbb{R}^{n+1}$ be defined by 
$$H_-(\lambda,x)= ( \lambda x + \nabla g_-, g_-) \hbox{ and } H_+(\lambda,x)= ( \lambda x + \nabla g_+, g_+).$$

\begin{lemma}
The sets $H_-^{-1}(0)$ and $H_+^{-1}(0)$ are compact.
\end{lemma}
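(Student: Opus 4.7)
The plan is to show each set is closed (automatic, as the preimage of $0$ under a continuous map) and then bound both the $\lambda$ and $x$ coordinates. I will do the argument for $H_-^{-1}(0)$; the case of $H_+^{-1}(0)$ is entirely analogous, using $f(x)=-1/\omega(x)^k$ instead of $f(x)=1/\omega(x)^k$, since only $|f(x)|$ enters Lemma \ref{first}.

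First I would note that if $(\lambda,x) \in H_-^{-1}(0)$, then $x\neq 0$: otherwise the equation $\lambda x + \nabla g_-(x)=0$ forces $\nabla g_-(0)=0$, but the second equation $g_-(x)=0$ would give $g_-(0)=0$, contradicting the previous lemma which asserts $g_-(0)>0$. Hence on $H_-^{-1}(0)$ we always have $x\neq 0$, and then the relation $\nabla g_-(x) = -\lambda x = -\lambda\, \nabla\omega(x)$ shows that $\nabla g_-(x)$ and $\nabla\omega(x)$ are linearly dependent, i.e.\ $x\in \Gamma_{g_-,\omega}=\Gamma_{f,\omega}$. Moreover, $g_-(x)=0$ rewrites as $f(x)=1/\omega(x)^k>0$, so in particular $x\notin f^{-1}(0)$, and thus $x\in \Gamma_{f,\omega}\setminus f^{-1}(0)$.

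Next I bound the $x$-coordinate. By Lemma \ref{first}, there exists $R\gg 1$ such that every $x\in \Gamma_{f,\omega}\setminus f^{-1}(0)$ with $\omega(x)\ge R$ satisfies $|f(x)|>1/\omega(x)^k$. But on the projection of $H_-^{-1}(0)$ we have the equality $f(x)=1/\omega(x)^k$, which forces $\omega(x)<R$; this gives a uniform bound on $|x|$.

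Finally I bound $\lambda$. The set $T=\{x\in \Gamma_{f,\omega}\setminus f^{-1}(0) : g_-(x)=0,\ \omega(x)\le R\}$ is closed and bounded, hence compact, and by the first step it does not contain $0$; thus $|x|\ge \alpha$ on $T$ for some constant $\alpha>0$. From $\lambda x = -\nabla g_-(x)$ we then deduce $|\lambda|\le \|\nabla g_-\|_{L^\infty(T)}/\alpha$, which is finite by compactness of $T$ and continuity of $\nabla g_-$. Therefore $H_-^{-1}(0)$ is closed and bounded in $\mathbb{R}^{n+1}$, hence compact. The main (minor) subtlety I expect is simply keeping straight that $x=0$ must be excluded in order to translate $\lambda x + \nabla g_-(x)=0$ into the $\Gamma_{f,\omega}$-condition and to make the denominator $|x|$ bounded below; once that is handled, Lemma \ref{first} does all the real work.
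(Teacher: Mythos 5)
Your argument is correct, but it is worth noting that the paper does not actually prove this lemma: it simply cites \cite{Du1}, Lemma 5.4 and Remark 5.11, so your self-contained proof is genuinely different in kind. Your route is the natural one and fits the surrounding text well: from $H_-(\lambda,x)=0$ you read off that $\nabla g_-(x)=-\lambda\nabla\omega(x)$, hence $x\in\Gamma_{g_-,\omega}=\Gamma_{f,\omega}$, and $g_-(x)=0$ forces $f(x)=1/\omega(x)^k\neq 0$, so Lemma \ref{first} bounds $\omega(x)$; this is exactly the mechanism the paper itself uses two lemmas earlier to show $(\nabla g_-)^{-1}(0)\cap\{g_-=0\}$ is compact, and you correctly add the extra step of bounding $\lambda$ via $|\lambda|=\|\nabla g_-(x)\|/\|x\|$ together with the observation that $x=0$ is excluded because $g_-(0)=f(0)-1>0$. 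Two small points you should tighten. First, your set $T$ is written as a subset of $\Gamma_{f,\omega}\setminus f^{-1}(0)$, which is not closed in general; it is closed here only because $\{g_-=0\}$ is disjoint from $f^{-1}(0)$, so that $T=\Gamma_{f,\omega}\cap g_-^{-1}(0)\cap\{\omega\le R\}$ --- say this explicitly before invoking compactness. Second, for $H_+$ the sign flips ($f(x)=-1/\omega(x)^k$ on $\{g_+=0\}$) but, as you note, only $|f(x)|$ enters Lemma \ref{first}, and $g_+(0)=f(0)+1>0$ still excludes $x=0$, so the analogy is legitimate. What your approach buys is a proof readable inside this paper without chasing the reference; what the citation buys is brevity and the fact that \cite{Du1} treats these sets in the generality actually needed for the degree $\mathrm{deg}_\infty H_\pm$ to be defined and computed later.
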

\proof See \cite{Du1}, Lemma 5.4 and Remark 5.11. \endproof

Let $\delta_-$ be a small regular value of $g_-$. Let $R_- \gg 1$ be such that $\{ g_-=0 \} \cap \tilde{B}_{R_-}$ is a retract by deformation of $\{ g_-=0\}$, $\{ g_-\le 0 \} \cap \tilde{B}_{R_-}$ is a retract by deformation of $\{ g_-\le 0\}$ and $\{ g_-\ge 0 \} \cap \tilde{B}_{R_-}$ is a retract by deformation of $\{ g_- \ge 0\}$. 

\begin{proposition}
If $n$ is even, we have
$$ \chi ( \{ g_- = \delta_- \} \cap \tilde{B}_{R_-} ) = {\rm deg}_\infty H_-.$$
If $n$ is odd, we have
$$ \chi ( \{ g_- \ge \delta_- \} \cap \tilde{B}_{R_-} ) - \chi ( \{ g_- \le \delta_- \} \cap \tilde{B}_{R_-} )= 1-{\rm deg}_\infty H_-.$$
\end{proposition}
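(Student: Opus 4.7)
The strategy is to adapt the Khimshiashvili--Szafraniec approach: realize $\deg_\infty H_-$ as a signed count of local indices at the zeros of a small perturbation of $H_-$, and identify this count with an Euler characteristic via Poincar\'e--Hopf on a compact manifold with boundary. Set $\tilde H_-(\lambda,x) := (\lambda x + \nabla g_-(x),\, g_-(x) - \delta_-)$. Since $H_-^{-1}(0)$ is compact and $\delta_-$ is a small regular value of $g_-$, the set $\tilde H_-^{-1}(0)$ is compact and contained in the interior of $\tilde B_{R_-}$, and the straight-line homotopy between $H_-$ and $\tilde H_-$ has uniformly bounded zero set; by homotopy invariance of the topological degree, $\deg_\infty H_- = \deg_\infty \tilde H_-$. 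The zeros $(\lambda_0,x_0)$ of $\tilde H_-$ satisfy $\nabla g_-(x_0) = -\lambda_0 x_0$ and $g_-(x_0)=\delta_-$; they are exactly the Lagrange critical points of $\omega|_{W_{\delta_-}}$, where $W_{\delta_-} := \{g_-=\delta_-\}\cap\tilde B_{R_-}$, with multiplier $-1/\lambda_0$. A further arbitrarily small perturbation of $\delta_-$ makes $\omega|_{W_{\delta_-}}$ Morse, so each such zero is nondegenerate.

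A bordered-Hessian computation at $p=(\lambda_0,x_0)$ then gives
\[ \det d\tilde H_-(p) \;=\; (-1)^n\,\lambda_0\,\det(\lambda_0 I + H_{g_-})\cdot x_0^{T}(\lambda_0 I + H_{g_-})^{-1}x_0, \]
where $H_{g_-}=\mathrm{Hess}\,g_-(x_0)$. Combining this with the identity $\det(A)\,v^{T}A^{-1}v = \|v\|^{2}\det(A|_{v^\perp})$ applied to $A = \lambda_0 I + H_{g_-}$ and $v = x_0$, and with $\mathrm{Hess}(\omega|_{W_{\delta_-}})(x_0) = \lambda_0^{-1}(\lambda_0 I + H_{g_-})|_{T_{x_0}W_{\delta_-}}$ (recall $T_{x_0}W_{\delta_-}=x_0^\perp$ of dimension $n-1$), one obtains
\[ \mathrm{ind}_p(\tilde H_-) \;=\; (-1)^n\,\mathrm{sign}(\lambda_0)^n\,(-1)^{\mathrm{ind}_{x_0}(\omega|_{W_{\delta_-}})}. \]
When $n$ is even both parity factors collapse to $+1$, so $\mathrm{ind}_p(\tilde H_-)=(-1)^{\mathrm{ind}_{x_0}(\omega|_{W_{\delta_-}})}$. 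Poincar\'e--Hopf applied to the compact manifold-with-boundary $W_{\delta_-}$, using $\omega|_{W_{\delta_-}}$ as Morse function (whose gradient is outward on $\partial W_{\delta_-}=W_{\delta_-}\cap\tilde S_{R_-}$ since $\omega$ is maximal on $\tilde S_{R_-}$), then yields $\chi(W_{\delta_-})=\sum_p (-1)^{\mathrm{ind}_{x_0}(\omega|_{W_{\delta_-}})}=\deg_\infty H_-$, which is the first formula.

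When $n$ is odd the extra factor $-\mathrm{sign}(\lambda_0)$ persists, and the signed count must be split according to the sign of $\lambda_0$. A direct check shows that at a critical point with $\lambda_0>0$, $\nabla\omega=x$ points outward from $\{g_-\ge\delta_-\}$ along its boundary face $W_{\delta_-}$, while at $\lambda_0<0$ it points outward from $\{g_-\le\delta_-\}$. Applying Poincar\'e--Hopf (for a manifold with corners, after a standard smoothing of the corner $W_{\delta_-}\cap\tilde S_{R_-}$) to $\{g_-\ge\delta_-\}\cap\tilde B_{R_-}$ and $\{g_-\le\delta_-\}\cap\tilde B_{R_-}$ separately, with $\omega$ as Morse function, expresses each Euler characteristic as a signed count of Lagrange critical points corrected by the contribution $+1$ of the unique interior critical point $x=0$ of $\omega$, which lies in $\{g_-\ge\delta_-\}$ since $g_-(0)=f(0)-1>0>\delta_-$ and has Morse index $0$. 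Subtracting the two identities and invoking the sign formula above collapses the right-hand side to $1-\deg_\infty H_-$. The principal obstacle is the manifold-with-corners Poincar\'e--Hopf in the odd case: the corner along $W_{\delta_-}\cap\tilde S_{R_-}$ must be smoothed without creating spurious critical points of $\omega$, and the inward/outward decomposition of the boundary must be coherently matched with the sign of $\lambda_0$ at every Lagrange critical point.
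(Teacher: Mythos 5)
Your write-up should be measured against the fact that the paper does not actually prove this proposition: its entire proof is the line ``Apply equalities (A) and (D) in \cite{Du1}, page 332, and use the fact that $g_-(0)>0$''. So your self-contained reconstruction (Lagrange critical points of $\omega$ on the level set, bordered-Hessian index computation, Morse theory for manifolds with boundary) is a genuinely different presentation, though it is exactly the method of the cited source. Your local algebra is correct: the formula for $\det d\tilde H_-(p)$, the identity $\det(A)\,v^{T}A^{-1}v=\|v\|^{2}\det(A|_{v^\perp})$, and the identification of $\lambda_0^{-1}(\lambda_0 I+\mathrm{Hess}\,g_-(x_0))|_{x_0^\perp}$ with $\mathrm{Hess}(\omega|_{W_{\delta_-}})(x_0)$ all check out. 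The problems are elsewhere.

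First, the opening compactness and homotopy claims are unjustified and in general false. The zero set of your straight-line homotopy is $\{(\lambda,x):\nabla g_-(x)=-\lambda x,\ g_-(x)\in[\,0\wedge\delta_-,\,0\vee\delta_-\,]\}$, and $\Gamma_{g_-,\omega}=\Gamma_{f,\omega}$ may contain unbounded branches along which $g_-\to 0$: for instance any unbounded branch of $\Sigma_f\cap f^{-1}(0)$ (which the paper explicitly allows to be non-compact) lies in $\Gamma_{g_-,\omega}$ with $g_-=-1/\omega^k\to 0^-$, and any branch of $\Gamma_{f,\omega}$ with $f\to 0^+$ has $g_-\to 0^+$. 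The compactness of $H_-^{-1}(0)$ is special to the level $g_-=0$ (it uses $|f|>1/\omega^k$ on $\Gamma_{f,\omega}\setminus f^{-1}(0)$, which is precisely what $g_-=0$ contradicts) and does not propagate to the level $\delta_-$ nor to the homotopy. The repair is standard but must be said: fix $\rho$ with $H_-^{-1}(0)\subset B_\rho\subset\mathbb{R}^{n+1}$, take $|\delta_-|<\min_{S_\rho}\|H_-\|$ so that $H_-$ and $\tilde H_-$ are linearly homotopic as maps $S_\rho\to\mathbb{R}^{n+1}\setminus\{0\}$, and identify $\deg_\infty H_-$ with the sum of local indices of $\tilde H_-$ over its zeros \emph{inside} $B_\rho$ only; you must then also verify that these zeros coincide with the critical points of $\omega|_{W_{\delta_-}}$, which couples the choices of $\delta_-$, $\rho$ and $R_-$ in a way you do not address. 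Second, in the odd case the final sign is asserted, not derived, and it does not come out as stated under your own conventions: feeding $\mathrm{ind}_p(\tilde H_-)=-\mathrm{sign}(\lambda_0)(-1)^{\mathrm{ind}_{x_0}(\omega|_{W_{\delta_-}})}$ into the boundary Morse identities $\chi(\{g_-\ge\delta_-\}\cap\tilde B_{R_-})=1+\sum_{\lambda_0<0}(-1)^{\mathrm{ind}}$ and $\chi(\{g_-\le\delta_-\}\cap\tilde B_{R_-})=\sum_{\lambda_0>0}(-1)^{\mathrm{ind}}$ yields $1+\deg_\infty H_-$, not $1-\deg_\infty H_-$, when the Jacobian is taken with the source ordered $(\lambda,x)$ and the gradient block first in the target. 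The case $n=1$, $f(x)=2-x^2$ already exhibits the discrepancy: there $\chi(\ge)-\chi(\le)=-1$ while the sum of local indices with your conventions is $-2$. The stated identity therefore depends on the orientation conventions of \cite{Du1} defining $\deg_\infty H_-$, and as written your argument does not close this gap.
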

\proof Apply equalities {\bf (A)} and {\bf (D)} in \cite{Du1}, page 332, and use the fact that $g_{-}(0)>0$. \endproof

If $n$ is even, we have
$$ \chi ( \{ g_- = \delta_- \} \cap \tilde{B}_{R_-} )  = \frac{1}{2}  \chi ( \{ g_- = \delta_- \} \cap \tilde{S}_{R_-} ) =  \chi ( \{ g_- \le \delta_- \} \cap \tilde{S}_{R_-} ) =$$
$$\chi ( \{ g_- \le 0 \} \cap \tilde{S}_{R_-} ) = \chi ({\rm Lk}^\infty ( \{ g_- \le 0 \})) .$$

If $n$ is odd, we have
$$ \chi ( \{ g_- \ge  \delta_- \} \cap \tilde{B}_{R_-} ) - \chi ( \{ g_- \le  \delta_- \} \cap \tilde{B}_{R_-} ) =$$ $$ \frac{1}{2}  \left[ \chi ( \{ g_- \ge  \delta_- \} \cap \tilde{S}_{R_-} )-\chi ( \{ g_- \le  \delta_- \} \cap \tilde{S}_{R_-} ) \right].$$
Since,
$$\chi(\tilde{S}_{R_-})=2 = \chi ( \{ g_- \ge  \delta_- \} \cap \tilde{S}_{R_-} ) + \chi ( \{ g_- \le  \delta_- \} \cap \tilde{S}_{R_-} ),$$
we find that $\chi ( \{ g_- \ge  \delta_- \} \cap \tilde{S}_{R_-} )=2-\chi ( \{ g_- \le  \delta_- \} \cap \tilde{S}_{R_-} )$ and that 
$$ \chi ( \{ g_- \ge  \delta_- \} \cap \tilde{B}_{R_-} ) - \chi ( \{ g_- \le  \delta_- \} \cap \tilde{B}_{R_-} ) = 1- \chi ( \{ g_- \le  \delta_- \} \cap \tilde{S}_{R_-} )=$$
$$1-\chi ( \{ g_- \le 0 \} \cap \tilde{S}_{R_-} )=1 -\chi( {\rm Lk}^\infty (\{g_- \le 0 \})).$$
\begin{corollary}
We have 
$$\chi ( {\rm Lk}^\infty ( \{g_- \le 0 \})) = \chi ( {\rm Lk}^\infty ( \{f \le 0 \})) = \hbox{\rm deg}_\infty H_-.$$
\end{corollary}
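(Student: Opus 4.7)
The plan is to simply assemble the pieces that the preceding text has already put in place, so the proof is essentially a synthesis rather than a new argument.

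First, the equality $\chi({\rm Lk}^\infty(\{g_- \le 0\})) = \chi({\rm Lk}^\infty(\{f \le 0\}))$ is already recorded as the corollary immediately following the lemma that compared $\{g_- \le 0 \} \cap \tilde{S}_R$ with $\{f \le 0 \} \cap \tilde{S}_R$ via the retraction argument using Lemma \ref{first}. So nothing further is needed for that half of the statement; I would just cite that corollary.

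For the second equality $\chi({\rm Lk}^\infty(\{g_- \le 0\})) = \deg_\infty H_-$, I would split according to the parity of $n$ and in each case identify both sides through the proposition preceding the corollary. If $n$ is even, the proposition gives $\chi(\{g_-=\delta_-\} \cap \tilde{B}_{R_-}) = \deg_\infty H_-$, and the displayed chain of equalities just above the corollary identifies this common value successively with $\tfrac{1}{2}\chi(\{g_-=\delta_-\}\cap \tilde{S}_{R_-})$, then with $\chi(\{g_-\le \delta_-\}\cap \tilde{S}_{R_-})$ (using $\chi(\tilde{S}_{R_-})=0$ in odd ambient dimension and additivity across the regular level), then with $\chi(\{g_-\le 0\}\cap \tilde{S}_{R_-})$ (via the absence of critical points of $g_-$ on $\tilde{S}_{R_-}$ in the small interval $[0,\delta_-]$ for $R_-$ large, which in turn rests on Lemma \ref{first} and the defining choice of $g_-$), and finally with $\chi({\rm Lk}^\infty(\{g_-\le 0\}))$ by the retraction properties built into the choice of $R_-$.

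If $n$ is odd, the proposition gives $\chi(\{g_-\ge\delta_-\}\cap \tilde{B}_{R_-}) - \chi(\{g_-\le\delta_-\}\cap \tilde{B}_{R_-}) = 1-\deg_\infty H_-$; the parallel chain of equalities presented above the corollary, which uses $\chi(\tilde{S}_{R_-}) = 2$ when $n-1$ is even together with the same retraction-through-regular-values reasoning, shows that this difference also equals $1-\chi({\rm Lk}^\infty(\{g_-\le 0\}))$. Comparing the two yields the second equality in this case as well. Combining the two halves gives the corollary.

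There is no real obstacle here: the genuine technical work (construction of $g_-$, compactness of $H_-^{-1}(0)$, reduction to a large ball, the Poincar\'e--Hopf-type identities from \cite{Du1}) has been done in the preceding lemmas, proposition, and previous corollary, and what remains is merely to line them up. The only minor point to be careful about is verifying that $R_-$ is large enough to simultaneously ensure the three retractions defining $R_-$, the equality between $\chi(\{g_- \le 0\}\cap \tilde{S}_{R_-})$ and $\chi({\rm Lk}^\infty(\{g_- \le 0\}))$, and the absence of critical levels of $g_-$ on $\tilde{S}_{R_-}$ in $[0,\delta_-]$; enlarging $R_-$ if necessary takes care of this.
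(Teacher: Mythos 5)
Your proposal is correct and follows the paper's own route exactly: the first equality is the earlier corollary obtained from the deformation-retract comparison of $\{g_-\le 0\}\cap\tilde{S}_R$ with $\{f\le 0\}\cap\tilde{S}_R$, and the second is obtained, just as in the paper, by combining the proposition expressing ${\rm deg}_\infty H_-$ in terms of $\chi(\{g_-=\delta_-\}\cap\tilde{B}_{R_-})$ (for $n$ even) or the difference $\chi(\{g_-\ge\delta_-\}\cap\tilde{B}_{R_-})-\chi(\{g_-\le\delta_-\}\cap\tilde{B}_{R_-})$ (for $n$ odd) with the displayed chains of equalities immediately preceding the corollary. The only small imprecision is your justification of the step $\tfrac{1}{2}\chi(\{g_-=\delta_-\}\cap\tilde{S}_{R_-})=\chi(\{g_-\le\delta_-\}\cap\tilde{S}_{R_-})$ in the even case: additivity together with $\chi(\tilde{S}_{R_-})=0$ alone does not give it; one uses instead that $\{g_-\le\delta_-\}\cap\tilde{S}_{R_-}$ is a compact odd-dimensional manifold with boundary $\{g_-=\delta_-\}\cap\tilde{S}_{R_-}$, so that $\chi(M)=\tfrac{1}{2}\chi(\partial M)$.
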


$\hfill \Box$

Let $\delta_+$ be a small regular value of $g_+$. Let $R_+ \gg 1$ be such that $\{ g_+=0 \} \cap \tilde{B}_{R_+}$ is a retract by deformation of $\{ g_+=0\}$, $\{ g_+\le 0 \} \cap \tilde{B}_{R_+}$ is a retract by deformation of $\{ g_+\le 0\}$ and $\{ g_+\ge 0 \} \cap \tilde{B}_{R_+}$ is a retract by deformation of $\{ g_+ \ge 0\}$. 

\begin{proposition}
If $n$ is even, we have
$$ \chi ( \{ g_+ = \delta_+ \} \cap \tilde{B}_{R_+} ) = {\rm deg}_\infty H_+.$$
If $n$ is odd, we have 
$$ \chi ( \{ g_+ \ge \delta_+ \} \cap \tilde{B}_{R_+} ) - \chi ( \{ g_+ \le \delta_+ \} \cap \tilde{B}_{R_+} )= 1-{\rm deg}_\infty H_+.$$
\end{proposition}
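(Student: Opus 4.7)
The approach is to mirror the proof of the preceding proposition for $g_-$ verbatim, with $g_-, \delta_-, R_-, H_-$ replaced by their ``$+$'' counterparts. The four preconditions required to invoke equalities \textbf{(A)} and \textbf{(D)} of \cite{Du1}, page 332, have already been verified in the lemmas immediately preceding this proposition: $g_+$ is $C^2$-semi-algebraic; $g_+(0) = f(0)+1 > 0$ (by the lemma on signs at the origin); the set $(\nabla g_+)^{-1}(0) \cap \{g_+=0\}$ is compact (by the lemma on compactness of the critical zero locus, which used $\Gamma_{g_+,\omega}=\Gamma_{f,\omega}$ together with Lemma \ref{first}); and $H_+^{-1}(0)$ is compact (by the subsequent lemma, whose proof refers to \cite{Du1}, Lemma 5.4 and Remark 5.11).

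First I would fix $\delta_+$ as a small regular value of $g_+$ (existence by semi-algebraic Sard) and choose $R_+ \gg 1$ large enough so that the three stated deformation-retract conditions hold inside $\tilde{B}_{R_+}$; such an $R_+$ exists because the semi-algebraic topology of the level, sub-level and super-level sets of $g_+$ stabilizes outside a compact set. Then, in the case $n$ even, I would apply equality \textbf{(A)} to $g_+$ to obtain
$$ \chi(\{g_+=\delta_+\}\cap \tilde{B}_{R_+}) = {\rm deg}_\infty H_+, $$
where the condition $g_+(0)>0$ is precisely what pins down the sign in front of the degree (it selects the ``positive side'' in the Morse-theoretic count underlying (A)). In the case $n$ odd, I would apply equality \textbf{(D)} to obtain the signed difference
$$ \chi(\{g_+\ge\delta_+\}\cap \tilde{B}_{R_+}) - \chi(\{g_+\le\delta_+\}\cap \tilde{B}_{R_+}) = 1-{\rm deg}_\infty H_+, $$
again using $g_+(0)>0$ as the sign-pinning input.

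The main, and essentially only, obstacle is verifying that the sign conventions of \cite{Du1} are insensitive to the replacement of $g_-$ by $g_+$. Since equalities \textbf{(A)} and \textbf{(D)} depend only on the positivity of the function at the origin and on the compactness of the associated zero set $H^{-1}(0)$, both of which hold for $g_+$, the argument proceeds identically to the $g_-$ case and no new analytic input is required. This proposition will then combine with the corollary $\chi({\rm Lk}^\infty(\{g_+\ge 0\}))=\chi({\rm Lk}^\infty(\{f\ge 0\}))$ and the sphere identity $\chi(\tilde{S}_{R_+})=2$ (in the odd-$n$ case) exactly as in the $g_-$ argument to yield a degree formula for $\chi({\rm Lk}^\infty(\{f\ge 0\}))$.
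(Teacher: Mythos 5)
Your proposal matches the paper's proof exactly: the paper also simply applies equalities \textbf{(A)} and \textbf{(D)} of \cite{Du1}, page 332, using the fact that $g_+(0)>0$, with the required compactness hypotheses supplied by the preceding lemmas. Your elaboration of why those hypotheses hold and of the role of the sign condition is consistent with, and slightly more detailed than, the paper's one-line argument.
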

\proof Apply equalities {\bf (A)} and {\bf (D)} in \cite{Du1}, page 332, and use the fact that $g_+(0)>0$. \endproof

\begin{corollary}
If $n$ is even, we have
$$\chi ({\rm Lk}^\infty (\{ f \ge 0 \})) ={\rm deg}_\infty H_+.$$
If $n$ is odd, we have
$$\chi ({\rm Lk}^\infty (\{ f \ge 0 \})) =2-{\rm deg}_\infty H_+.$$
\end{corollary}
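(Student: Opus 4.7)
The plan is to mirror, step by step, the chain of equalities already carried out for $g_-$ in the paragraphs preceding the analogous corollary, but with $g_-$ replaced by $g_+$ and the sublevel inequalities replaced by superlevel ones. The Proposition just above already provides the link with ${\rm deg}_\infty H_+$; what remains is to bridge $\chi(\{g_+ = \delta_+\} \cap \tilde{B}_{R_+})$ (resp. the signed difference of the two half-balls) to $\chi({\rm Lk}^\infty(\{f \ge 0\}))$. The previous corollary on $g_+$ has already identified $\chi({\rm Lk}^\infty(\{g_+ \ge 0\}))$ with $\chi({\rm Lk}^\infty(\{f \ge 0\}))$, so I only need to express the left-hand side of the Proposition via the sphere $\tilde{S}_{R_+}$.

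For $n$ even, I would observe that $\{g_+ = \delta_+\} \cap \tilde{B}_{R_+}$ is a compact manifold of odd dimension $n-1$ whose boundary is $\{g_+ = \delta_+\} \cap \tilde{S}_{R_+}$, so its Euler characteristic equals one half the Euler characteristic of that boundary. Similarly $\{g_+ \ge \delta_+\} \cap \tilde{S}_{R_+}$ is a compact manifold of odd dimension $n-1$ with the same boundary, hence the same one-half formula. Chaining these equalities, then using the deformation retraction $\{g_+ \ge \delta_+\} \cap \tilde{S}_{R_+} \simeq \{g_+ \ge 0\} \cap \tilde{S}_{R_+}$ (valid because $\delta_+$ is a small regular value and $R_+$ is chosen so large that the retracts onto $\{g_+ \ge 0\}$ hold), and invoking the preceding corollary to replace $g_+$ by $f$, I land on $\chi({\rm Lk}^\infty(\{f\ge 0\})) = \chi(\{g_+ = \delta_+\} \cap \tilde{B}_{R_+}) = {\rm deg}_\infty H_+$.

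For $n$ odd, I would start from the Proposition's signed difference $\chi(\{g_+ \ge \delta_+\} \cap \tilde{B}_{R_+}) - \chi(\{g_+ \le \delta_+\} \cap \tilde{B}_{R_+}) = 1 - {\rm deg}_\infty H_+$ and replicate the dimensional trick: both half-balls are compact odd-dimensional manifolds with corners, so their Euler characteristics are half of the corresponding sphere slices, i.e. the signed difference on $\tilde{B}_{R_+}$ equals one half of the same signed difference on $\tilde{S}_{R_+}$. Since $n-1$ is even, $\chi(\tilde{S}_{R_+})=2$, and the common level $\{g_+=\delta_+\}\cap\tilde{S}_{R_+}$ is a closed $(n-2)$-manifold of odd dimension with vanishing Euler characteristic, so Mayer--Vietoris gives $\chi(\{g_+\ge \delta_+\}\cap \tilde{S}_{R_+}) + \chi(\{g_+\le \delta_+\}\cap \tilde{S}_{R_+}) = 2$. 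Eliminating the sublevel contribution and using the retraction and the corollary to pass to $f$, the signed difference becomes $\chi({\rm Lk}^\infty(\{f\ge 0\})) - 1$; equating with $1 - {\rm deg}_\infty H_+$ yields the claimed $\chi({\rm Lk}^\infty(\{f\ge 0\})) = 2 - {\rm deg}_\infty H_+$.

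The whole argument contains no new geometric content beyond the $g_-$ counterpart; the only point that requires care is a consistent choice of the radius $R_+ \gg 1$ so that all three deformation retracts onto $\{g_+ = 0\}$, $\{g_+ \le 0\}$ and $\{g_+ \ge 0\}$ are valid simultaneously with the inequality $|f(x)| > 1/\omega(x)^k$ outside $f^{-1}(0)\cap\Gamma_{f,\omega}$, exactly as stipulated in the definition of $R_+$ preceding the Proposition. The main (mild) obstacle is therefore purely bookkeeping: making sure that the parity-dependent Euler-characteristic identities of half-ball and half-sphere types of the right dimensions are invoked correctly and that the semi-algebraic retractions are available on the intersection of the chosen tame ball with all relevant sublevel/superlevel sets.
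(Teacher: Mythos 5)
Your proposal is correct and follows essentially the same route as the paper: for $n$ even the paper simply transposes the chain of half-boundary Euler characteristic identities and deformation retracts already written out for $g_-$, and for $n$ odd it performs exactly your computation, reducing the signed difference over $\tilde{B}_{R_+}$ to $\chi(\{g_+\ge\delta_+\}\cap\tilde{S}_{R_+})-1=\chi({\rm Lk}^\infty(\{f\ge 0\}))-1$ and equating with $1-{\rm deg}_\infty H_+$. The only difference is that you spell out the manifold-with-corners and Mayer--Vietoris bookkeeping that the paper leaves implicit.
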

\proof If $n$ is even, we apply the same proof as for ${\rm Lk}^\infty( \{ f \le 0 \})$. 

If $n$ is odd, we remark that 
$$ \chi ( \{ g_+ \ge \delta_+ \} \cap \tilde{B}_{R_+} ) - \chi ( \{ g_+ \le \delta_+ \} \cap \tilde{B}_{R_+} )=$$
$$ \chi ( \{ g_+ \ge \delta_+ \} \cap \tilde{S}_{R_+} ) - 1= \chi({\rm Lk}^\infty ( \{ f \ge 0 \})) -1= 1-{\rm deg}_\infty H_+.$$
\endproof

\begin{corollary}
If $n$ is even, we have
$$\chi ({\rm Lk}^\infty (\{f=0 \})) = {\rm deg}_\infty H_- + {\rm deg}_\infty H_+.$$
If $n$ is odd, we have
$$\chi ({\rm Lk}^\infty (\{f=0 \})) = {\rm deg}_\infty H_- - {\rm deg}_\infty H_+.$$
\end{corollary}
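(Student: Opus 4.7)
The plan is to combine the two previous corollaries with an additivity (Mayer--Vietoris) argument for the Euler characteristic of the link at infinity. Concretely, since $\tilde{S}_R = \omega^{-1}(R)$ is diffeomorphic to $S^{n-1}$ for $R \gg 1$, and since for $R$ large enough the intersections $\{f \le 0\} \cap \tilde{S}_R$, $\{f \ge 0\} \cap \tilde{S}_R$ and $\{f = 0\} \cap \tilde{S}_R$ are, up to homotopy, the links at infinity ${\rm Lk}^\infty(\{f \le 0\})$, ${\rm Lk}^\infty(\{f \ge 0\})$ and ${\rm Lk}^\infty(\{f = 0\})$, I would write
$$\{f \le 0\} \cap \tilde{S}_R \;\cup\; \{f \ge 0\} \cap \tilde{S}_R \;=\; \tilde{S}_R,$$
$$\{f \le 0\} \cap \tilde{S}_R \;\cap\; \{f \ge 0\} \cap \tilde{S}_R \;=\; \{f = 0\} \cap \tilde{S}_R,$$
and apply the inclusion--exclusion formula for the Euler characteristic of closed semi-algebraic sets.

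From this I obtain
$$\chi\bigl({\rm Lk}^\infty(\{f \le 0\})\bigr) + \chi\bigl({\rm Lk}^\infty(\{f \ge 0\})\bigr) = \chi(\tilde{S}_R) + \chi\bigl({\rm Lk}^\infty(\{f = 0\})\bigr).$$
Since $\tilde{S}_R \simeq S^{n-1}$, its Euler characteristic equals $0$ when $n$ is even and $2$ when $n$ is odd. I then substitute the values computed in the two preceding corollaries: $\chi({\rm Lk}^\infty(\{f \le 0\})) = \deg_\infty H_-$ in both parities, while $\chi({\rm Lk}^\infty(\{f \ge 0\})) = \deg_\infty H_+$ if $n$ is even and $\chi({\rm Lk}^\infty(\{f \ge 0\})) = 2 - \deg_\infty H_+$ if $n$ is odd. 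In the even case I immediately get $\chi({\rm Lk}^\infty(\{f=0\})) = \deg_\infty H_- + \deg_\infty H_+$; in the odd case the two $2$'s cancel and I get $\chi({\rm Lk}^\infty(\{f=0\})) = \deg_\infty H_- - \deg_\infty H_+$.

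The only nontrivial point, and the place I would be most careful, is justifying the inclusion--exclusion identity for the three links at infinity rather than just for their finite-$R$ models. The cleanest route is to pick a single radius $R$ large enough to work simultaneously for all three sets: one takes $R \ge \max(R_-, R_+)$ so that the deformation retraction statements used in the previous corollaries are valid for both $\{f \ge 0\}$ and $\{f \le 0\}$, and one uses the fact that for $R \gg 1$ the sphere $\tilde{S}_R$ is transverse to $\{f = 0\}$ (which follows from the argument in Lemma~\ref{first}, since outside $\Gamma_{f,\omega}$ the gradient of $f$ is not collinear with $\nabla \omega$). With $R$ so chosen, the inclusion--exclusion identity for Euler characteristics of closed semi-algebraic subsets of $\tilde{S}_R$ applies directly, and the rest is the arithmetic outlined above.
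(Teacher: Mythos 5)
Your proposal is correct and follows essentially the same route as the paper: the paper's proof is exactly the inclusion--exclusion identity $\chi(S^{n-1})=\chi({\rm Lk}^\infty(\{f\ge 0\}))+\chi({\rm Lk}^\infty(\{f\le 0\}))-\chi({\rm Lk}^\infty(\{f=0\}))$ combined with the two preceding corollaries. Your extra care about fixing a single large radius $R$ for all three links is a reasonable elaboration of a point the paper leaves implicit.
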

\proof Use the equality
$$\chi(S^{n-1})= \chi ({\rm Lk}^\infty (\{f \ge 0 \})) + \chi ({\rm Lk}^\infty (\{f  \le 0 \})) -\chi ({\rm Lk}^\infty (\{f=0 \})) ,$$
and the previous corollaries. \endproof

\underline{Application:} Let $X \subset \mathbb{R}^n$ be a closed semi-algebraic set. There exists a $C^2$-semi-algebraic function $f : \mathbb{R}^n \rightarrow \mathbb{R}$ such that $X=f^{-1}(0)$ and $f \ge 0$. By a change of origin if necessary, we can assume that $f(0)>1$.
In this case, $g_+ >0$ and so ${\rm deg}_\infty H_+ =0$.
\begin{corollary}
We have $\chi({\rm Lk}^\infty(X))={\rm deg}_\infty H_-$.
\end{corollary}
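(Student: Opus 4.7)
Almost all the work has already been done in the preceding corollaries; what remains is to feed in the structural hypothesis that $X$ is the zero set of a nonnegative function and observe that this forces the contribution of $H_{+}$ to vanish.

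First I would justify the setup. Any closed semi-algebraic set $X \subset \bR^{n}$ admits a defining $C^{2}$-semi-algebraic function $f \ge 0$ with $X = f^{-1}(0)$: for instance, one may take $f(x) = {\rm dist}(x,X)^{2}$ (semi-algebraic because $X$ is; smoothness of class $C^{2}$ is arranged by a standard semi-algebraic mollification), or, when $X$ is cut out by polynomial equations $p_{1} = \cdots = p_{s} = 0$, the sum of squares $p_{1}^{2} + \cdots + p_{s}^{2}$. If $X = \bR^{n}$ the statement is vacuous; otherwise translating the origin to a point outside $X$ and then multiplying $f$ by a large enough positive constant (which preserves the zero set and the $C^{2}$ class) produces $f(0) > 1$, the normalization required by the two preceding propositions.

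Next, since $f \ge 0$ on $\bR^{n}$ and $\omega \ge 1$, the function $g_{+} = f + \omega^{-k}$ is strictly positive everywhere on $\bR^{n}$. Therefore
\begin{equation*}
H_{+}^{-1}(0) = \{(\lambda,x) \in \bR^{n+1} \mid \lambda x + \nabla g_{+}(x) = 0 \text{ and } g_{+}(x) = 0 \} = \varnothing,
\end{equation*}
because the last coordinate of $H_{+}$ never vanishes. Consequently ${\rm deg}_{\infty} H_{+} = 0$. (As a consistency check, the preceding corollary then delivers $\chi({\rm Lk}^{\infty}(\{f \ge 0\})) = 0$ for $n$ even and $=2$ for $n$ odd, which agrees with $\chi(S^{n-1})$ since $\{f \ge 0\} = \bR^{n}$.)

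Finally, applying the corollary that computes $\chi({\rm Lk}^{\infty}(\{f=0\}))$ in the two parity cases yields
\begin{equation*}
\chi({\rm Lk}^{\infty}(X)) = \chi({\rm Lk}^{\infty}(\{f=0\})) = {\rm deg}_{\infty} H_{-} \pm {\rm deg}_{\infty} H_{+} = {\rm deg}_{\infty} H_{-},
\end{equation*}
with sign $+$ for $n$ even and $-$ for $n$ odd; in either case the $H_{+}$ term drops and the identity follows. The only mildly subtle step is the initial construction of $f$---guaranteeing $C^{2}$ regularity while keeping $f \ge 0$ and $f^{-1}(0) = X$---but this is a standard tool of real semi-algebraic geometry, and every other input has been furnished by the corollaries immediately above.
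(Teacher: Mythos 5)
Your proof is correct and follows the paper's own route exactly: the paper's (essentially one-line) argument is precisely that $f\ge 0$ forces $g_+=f+\omega^{-k}>0$, hence $H_+^{-1}(0)=\varnothing$ and ${\rm deg}_\infty H_+=0$, after which the preceding corollary for $\chi({\rm Lk}^\infty(\{f=0\}))$ gives the result in both parities. Your additional remarks on constructing the defining function $f$ and the consistency check are fine but not needed beyond what the paper already assumes.
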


$\hfill \Box$

Now we return to the general case. The functions $g_-$ and $g_+$ are semi-algebraic but not polynomial even if $f$ is a polynomial. Let $G_-$ and $G_+$ be defined by
$$G_-(x)= \omega(x)^k g_-(x)=\omega(x)^k f(x)-1,$$ and $$G_+(x)= \omega(x)^k g_+(x)= \omega(x)^k f(x)+1.$$
If $f$ is a polynomial then so are $G_-$ and $G_+$.  Furthermore, $\{G_-=0\}=\{g_-=0\}$, $\{G_-\le 0\}=\{g_- \le 0\}$, $\{G_- \ge 0\}=\{g_- \ge0\}$ and $\{G_+=0\}=\{g_+=0\}$, $\{G_+\le 0\}=\{g_+ \le 0\}$, $\{G_+ \ge 0\}=\{g_+\ge0\}$.
\begin{lemma}
The sets $(\nabla G_-)^{-1}(0) \cap \{G_-=0 \}$ and $(\nabla G_+)^{-1}(0) \cap \{G_+=0 \}$ are compact.
\end{lemma}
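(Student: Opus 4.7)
The plan is to reduce the compactness of $(\nabla G_\pm)^{-1}(0) \cap \{G_\pm = 0\}$ to the compactness of $(\nabla g_\pm)^{-1}(0) \cap \{g_\pm = 0\}$, which was already established in the earlier lemma. The key observation is that $G_\pm = \omega^k g_\pm$ where $\omega \ge 1$ is strictly positive, so the two zero sets $\{G_\pm = 0\}$ and $\{g_\pm = 0\}$ coincide, and on this common zero set the critical point conditions for $G_\pm$ and $g_\pm$ are equivalent.

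More precisely, I will compute $\nabla G_-(x) = k\omega(x)^{k-1}\nabla\omega(x)\, g_-(x) + \omega(x)^k \nabla g_-(x)$, and recall that $\nabla\omega(x)=x$. On the set $\{G_-=0\}=\{g_-=0\}$ the first summand vanishes identically, so $\nabla G_-(x) = \omega(x)^k \nabla g_-(x)$ there. Since $\omega(x) \ge 1 > 0$, the factor $\omega(x)^k$ never vanishes, which gives the set-theoretic identity
\[
(\nabla G_-)^{-1}(0) \cap \{G_- = 0\} = (\nabla g_-)^{-1}(0) \cap \{g_- = 0\}.
\]
The same computation, with $g_+$ in place of $g_-$, yields the analogous identity for $G_+$. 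By the earlier lemma the right-hand sides are compact, hence so are the left-hand sides.

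I do not anticipate any real obstacle here: the identities $\{G_\pm = 0\} = \{g_\pm = 0\}$ and $\{G_\pm \gtrless 0\} = \{g_\pm \gtrless 0\}$ have already been noted in the paragraph preceding the lemma, and the Leibniz-rule computation above is routine. The only thing to double-check is that the formal identification of critical sets on $\{G_\pm = 0\}$ does not require any regularity hypothesis on $f$ beyond $C^2$, which is already assumed. This reduction is in fact precisely the purpose of introducing $G_\pm$: they are polynomial whenever $f$ is polynomial, while preserving all the relevant geometric data of the semi-algebraic functions $g_\pm$.
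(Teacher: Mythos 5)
Your proposal is correct and follows exactly the paper's own argument: a Leibniz-rule computation of $\nabla G_\pm$, the observation that on $\{G_\pm=0\}=\{g_\pm=0\}$ the gradient reduces to $\omega^k\nabla g_\pm$ with $\omega\ge 1$ nonvanishing, and the reduction to the earlier compactness lemma for $(\nabla g_\pm)^{-1}(0)\cap\{g_\pm=0\}$. No gaps.
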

\proof We have 
$$\nabla G_-(x) = k \omega(x)^{k-1} g_-(x)\nabla\omega (x)+ \omega(x)^k \nabla g_-(x).$$
If $G_-(x)=0$ then $g_-(x)=0$ and so $\nabla G_-(x) = \omega(x)^k \nabla g_-(x)$. We see that $(\nabla G_-)^{-1}(0) \cap \{G_-=0 \}=(\nabla g_-)^{-1}(0) \cap \{g_-=0 \}$, which is compact. \endproof

Let $L_-$ and $L_+ : \mathbb{R}^{n+1} \rightarrow \mathbb{R}^{n+1}$ be defined by 
$$L_-(\lambda,x)= ( \lambda x + \nabla G_-, G_-) \hbox{ and } L_+(\lambda,x)= ( \lambda x + \nabla G_+, G_+).$$

\begin{lemma}
The sets $L_-^{-1}(0)$ and $L_+^{-1}(0)$ are compact.
\end{lemma}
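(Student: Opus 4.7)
My plan is to reduce the problem to the compactness of $H_-^{-1}(0)$ and $H_+^{-1}(0)$ established in the previous lemma. The key observation is that on the zero set of $G_\pm$, the system $\lambda x + \nabla G_\pm = 0$, $G_\pm = 0$ defining $L_\pm^{-1}(0)$ is equivalent, up to multiplication of $\lambda$ by the positive factor $\omega(x)^k$, to the system $\tilde\lambda x + \nabla g_\pm = 0$, $g_\pm = 0$ defining $H_\pm^{-1}(0)$.

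Concretely, take a point $(\lambda, x) \in L_-^{-1}(0)$. From $G_-(x) = \omega(x)^k g_-(x) = 0$ and $\omega(x) \ge 1 > 0$ we get $g_-(x) = 0$. Plugging this into the expression
$$\nabla G_-(x) = k\, \omega(x)^{k-1} g_-(x) \nabla \omega(x) + \omega(x)^k \nabla g_-(x)$$
proved in the preceding lemma, we obtain $\nabla G_-(x) = \omega(x)^k \nabla g_-(x)$. Hence $\lambda x + \nabla G_-(x) = 0$ becomes
$$\frac{\lambda}{\omega(x)^k}\, x + \nabla g_-(x) = 0,$$
so that $(\tilde\lambda, x) \in H_-^{-1}(0)$ with $\tilde\lambda = \lambda/\omega(x)^k$.

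Since $H_-^{-1}(0)$ is compact, the coordinate $x$ ranges over a bounded set; therefore $\omega(x)^k$ is bounded above (and bounded below by $1$), and $\tilde\lambda$ is bounded. Consequently $\lambda = \tilde\lambda\,\omega(x)^k$ is bounded as well. Together with the fact that $L_-^{-1}(0)$ is closed as the zero locus of a continuous map, this gives compactness. The same argument applies verbatim to $L_+^{-1}(0)$, replacing $g_-, G_-, H_-$ by $g_+, G_+, H_+$.

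I do not anticipate a real obstacle: all the analytic content is packaged into the compactness of $H_\pm^{-1}(0)$ and into the factorization of $\nabla G_\pm$ on $\{G_\pm=0\}$. The only point to double-check is that the multiplicative factor $\omega(x)^k$ never degenerates and never blows up along $L_\pm^{-1}(0)$; this is automatic from $\omega \ge 1$ and from the boundedness of $x$ obtained after the reduction to $H_\pm^{-1}(0)$.
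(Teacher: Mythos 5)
Your argument is correct, but it takes a genuinely different route from the paper: the paper proves this lemma exactly as it proves the corresponding statement for $H_\pm^{-1}(0)$, namely by citing Lemma 5.4 and Remark 5.11 of [Du1] directly, whereas you instead reduce the new lemma to the old one. Your reduction is sound: on $\{G_\pm=0\}=\{g_\pm=0\}$ the identity $\nabla G_\pm=\omega^k\nabla g_\pm$ shows that $(\lambda,x)\mapsto(\lambda/\omega(x)^k,x)$ is a homeomorphism of $\mathbb{R}^{n+1}$ carrying $L_\pm^{-1}(0)$ onto $H_\pm^{-1}(0)$ (the inverse $(\tilde\lambda,x)\mapsto(\tilde\lambda\,\omega(x)^k,x)$ works because $\omega\ge 1$), and your two-step bounding of $x$ first and then $\lambda=\tilde\lambda\,\omega(x)^k$ is exactly what is needed to conclude boundedness, with closedness free from continuity of $L_\pm$. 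What your approach buys is self-containedness: all the external analytic input is concentrated in the single earlier citation for $H_\pm^{-1}(0)$, and the passage from the semi-algebraic functions $g_\pm$ to the polynomial functions $G_\pm$ becomes a purely formal change of variables rather than a second appeal to [Du1]. The paper's approach is shorter on the page but hides the (easy) relation between the two zero sets; your version makes that relation explicit, which is also the mechanism implicitly justifying the subsequent corollary that transfers the degree formulae from $H_\pm$ to $L_\pm$.
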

\proof See \cite{Du1}, Lemma 5.4 and Remark 5.11. \endproof

\begin{corollary}
We have
$$ \chi ( {\rm Lk}^\infty ( \{f \le 0 \})) = \hbox{\rm deg}_\infty L_-,$$
$$ \chi ( {\rm Lk}^\infty ( \{f \ge 0 \})) = \hbox{\rm deg}_\infty L_+, \hbox{\em if } n \ \hbox{\em is even},$$
$$ \chi ( {\rm Lk}^\infty ( \{f \ge 0 \})) = 2-\hbox{\rm deg}_\infty L_+, \hbox{\em if } n \ \hbox{\em is odd},$$
$$ \chi ( {\rm Lk}^\infty ( \{f = 0 \})) =\hbox{\rm deg}_\infty L_- + \hbox{\rm deg}_\infty L_+, \hbox{\em if } n \ \hbox{\em is even},$$
$$ \chi ( {\rm Lk}^\infty ( \{f  = 0 \})) = \hbox{\rm deg}_\infty L_- -\hbox{\rm deg}_\infty L_+, \hbox{\em if } n \ \hbox{\em is odd}.$$
\end{corollary}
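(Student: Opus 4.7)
The plan is to re-run the argument carried out for $g_-, g_+$ and $H_-, H_+$, but now with the polynomials $G_-, G_+$ and the associated maps $L_-, L_+$. The whole point of introducing $G_\pm$ is that when $f$ is polynomial they are too, so the degree $\deg_\infty L_\pm$ becomes a polynomial-map degree; the work is to check that all the ingredients survive this replacement.

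First I would record that $\omega(x) \ge 1 > 0$ forces $\{G_- \le 0\}=\{g_- \le 0\}$, $\{G_- \ge 0\}=\{g_- \ge 0\}$, $\{G_-=0\}=\{g_-=0\}$, and the analogous equalities for $G_+$ and $g_+$. In particular the links at infinity of these sets are identical, and already by the earlier corollaries they coincide with $\chi({\rm Lk}^\infty(\{f \le 0\}))$ and $\chi({\rm Lk}^\infty(\{f \ge 0\}))$ respectively. Next, the preceding lemma gives $(\nabla G_-)^{-1}(0)\cap \{G_-=0\}=(\nabla g_-)^{-1}(0)\cap \{g_-=0\}$, which is compact, and similarly for $G_+$; the lemma before yields compactness of $L_-^{-1}(0)$ and $L_+^{-1}(0)$. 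Finally, $G_\pm(0)=f(0)\pm 1>0$ under the standing assumption $f(0)>1$. Hence all hypotheses needed to invoke equalities \textbf{(A)} and \textbf{(D)} of \cite{Du1}, page 332, to the pairs $(G_\pm,L_\pm)$ are in place.

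Applying those equalities, for small regular values $\delta_\pm$ of $G_\pm$ and $R_\pm \gg 1$ such that $\{G_\pm = 0\}\cap \tilde B_{R_\pm}$ etc.\ are deformation retracts of $\{G_\pm = 0\}$ etc., I would obtain the same formulas as in the two propositions preceding this corollary, but with $L_\pm$ replacing $H_\pm$: namely $\chi(\{G_\pm=\delta_\pm\}\cap \tilde B_{R_\pm})=\deg_\infty L_\pm$ when $n$ is even, and $\chi(\{G_\pm\ge\delta_\pm\}\cap \tilde B_{R_\pm})-\chi(\{G_\pm\le\delta_\pm\}\cap \tilde B_{R_\pm})=1-\deg_\infty L_\pm$ when $n$ is odd. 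Then I repeat verbatim the Euler-characteristic bookkeeping on $\tilde S_{R_\pm}$ (splitting into $\{G_\pm \le \delta_\pm\}$ and $\{G_\pm \ge \delta_\pm\}$ and using $\chi(\tilde S_{R_\pm})=1+(-1)^{n-1}$) to rewrite these relations as identities for $\chi({\rm Lk}^\infty(\{G_-\le 0\}))$ and $\chi({\rm Lk}^\infty(\{G_+\ge 0\}))$. By the identifications of Step 1, this gives the first three formulae of the statement.

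For the remaining two formulae concerning ${\rm Lk}^\infty(\{f=0\})$, I would use the Mayer--Vietoris-type decomposition
$$\chi(S^{n-1})= \chi ({\rm Lk}^\infty (\{f \ge 0 \})) + \chi ({\rm Lk}^\infty (\{f  \le 0 \})) -\chi ({\rm Lk}^\infty (\{f=0 \})),$$
which was already invoked in the proof of the previous $H_\pm$-version of this corollary, substituting $\chi(S^{n-1})=0$ if $n$ is even and $\chi(S^{n-1})=2$ if $n$ is odd and solving for $\chi({\rm Lk}^\infty(\{f=0\}))$. The main obstacle has in fact already been disposed of before the statement, in the lemma that controls $(\nabla G_\pm)^{-1}(0)\cap \{G_\pm=0\}$: without the observation that the $\omega^k$-factor kills itself at the zero level, one could not port the compactness of the critical locus from $g_\pm$ to $G_\pm$, and the degree formulas \textbf{(A)}, \textbf{(D)} of \cite{Du1} would not apply. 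Once that is granted, the proof is pure recycling of the earlier argument.
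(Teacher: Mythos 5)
Your proposal is correct and is exactly the argument the paper intends: the corollary is stated without proof precisely because the level sets $\{G_\pm * 0\}$ coincide with $\{g_\pm * 0\}$, the compactness lemmas for $(\nabla G_\pm)^{-1}(0)\cap\{G_\pm=0\}$ and $L_\pm^{-1}(0)$ have just been established, and $G_\pm(0)=f(0)\pm 1>0$, so equalities \textbf{(A)} and \textbf{(D)} of \cite{Du1} and the subsequent Euler-characteristic bookkeeping (including the Mayer--Vietoris relation for the $\{f=0\}$ cases) carry over verbatim with $L_\pm$ in place of $H_\pm$. Nothing to add.
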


$\hfill \Box$

\underline{Application:} Let $X \subset \mathbb{R}^n$ be a closed semi-algebraic set. There exists a $C^2$-semi-algebraic function $f : \mathbb{R}^n \rightarrow \mathbb{R}$ such that $X=f^{-1}(0)$ and $f \ge 0$. By a change of origin if necessary, we can assume that $f(0)>1$.
In this case, $G_+ >0$ and so ${\rm deg}_\infty L_+ =0$.
\begin{corollary}
We have $\chi({\rm Lk}^\infty(X))={\rm deg}_\infty L_-$.
\end{corollary}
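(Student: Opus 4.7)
The plan is to derive this corollary as a direct application of the previous corollary together with the specific setup of the Application paragraph. The key observations are that $X$ is written as $\{f=0\}$ with $f \ge 0$, that after translation $f(0)>1$, and that these sign conditions force $L_+$ to have no zeros, so its degree at infinity vanishes.

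First, I would note that since $X \subset \mathbb{R}^n$ is closed semi-algebraic, one can construct a $C^2$-semi-algebraic function $f : \mathbb{R}^n \to \mathbb{R}$ with $f \ge 0$ and $X = f^{-1}(0)$ (for example, by a standard smoothing of the squared distance to $X$). By translating the origin if necessary, we may also assume $f(0)>1$, which guarantees the hypothesis $f(0)>1$ used to define $G_\pm$ and apply the preceding propositions.

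Next, the central computation: since $f \ge 0$ and $\omega(x) \ge 1$, we have
$$G_+(x) = \omega(x)^k f(x) + 1 \ge 1 > 0$$
for every $x \in \mathbb{R}^n$. In particular, $G_+$ never vanishes, so the set $L_+^{-1}(0) \subset \mathbb{R}^{n+1}$, which by construction is contained in $\{G_+ = 0\}$, is empty. Consequently $\mathrm{deg}_\infty L_+ = 0$.

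Finally, I would invoke the preceding corollary applied to $f$. In the even-$n$ case it reads $\chi({\rm Lk}^\infty(\{f=0\})) = \mathrm{deg}_\infty L_- + \mathrm{deg}_\infty L_+$, and in the odd-$n$ case it reads $\chi({\rm Lk}^\infty(\{f=0\})) = \mathrm{deg}_\infty L_- - \mathrm{deg}_\infty L_+$. In both cases the $\mathrm{deg}_\infty L_+$ term drops out, giving $\chi({\rm Lk}^\infty(\{f=0\})) = \mathrm{deg}_\infty L_-$. Since $\{f=0\} = X$ by construction, this yields $\chi({\rm Lk}^\infty(X)) = \mathrm{deg}_\infty L_-$, as desired. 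No step looks like a serious obstacle: the only mild points are verifying the existence of a suitable $C^2$-semi-algebraic $f \ge 0$ vanishing exactly on $X$ and noting that the translation used to enforce $f(0)>1$ does not change $X$ as a subset of $\mathbb{R}^n$ (up to this translation), and hence does not change $\chi({\rm Lk}^\infty(X))$.
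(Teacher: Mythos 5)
Your proof is correct and follows essentially the same route as the paper: the paper's Application paragraph likewise observes that $f\ge 0$ forces $G_+>0$, hence ${\rm deg}_\infty L_+=0$, and then the identity follows from the preceding corollary's formulas for $\chi({\rm Lk}^\infty(\{f=0\}))$ in both parities of $n$. (One could even shortcut via the line $\chi({\rm Lk}^\infty(\{f\le 0\}))={\rm deg}_\infty L_-$, since $\{f\le 0\}=X$ here, but your version is exactly what the paper does.)
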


Next we focus on the case when the $C^2$ semi-algebraic function is semi-tame.
\begin{definition}
Let $f : \mathbb{R}^n \rightarrow \mathbb{R}$ be a $C^2$-semi-algebraic function. We say that $f$ is semi-tame if for any sequence of points $(x_n)_{n \in \mathbb{N}}$ in $\Gamma_{f,\omega}$ such that $\vert x_n \vert \rightarrow + \infty$, we have $\lim_{n \rightarrow + \infty} \vert f(x_n) \vert =0$ or $+\infty$.
\end{definition}

\begin{lemma}
If $f : \mathbb{R}^n \rightarrow \mathbb{R}$ is semi-tame then $(\nabla f)^{-1}(0) \setminus \{f=0 \}$ is compact.
\end{lemma}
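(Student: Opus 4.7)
The plan is to prove compactness by showing separately that $(\nabla f)^{-1}(0)\setminus \{f=0\}$ is bounded and closed in $\mathbb{R}^{n}$. The engine in both arguments is the following trivial but crucial observation: if $\gamma:I\to (\nabla f)^{-1}(0)$ is any differentiable arc, then $(f\circ \gamma)'(t)=\nabla f(\gamma(t))\cdot \gamma'(t)=0$, so $f\circ\gamma$ is constant along $\gamma$. Recall also that $\nabla f(x)=0$ forces the rank of $[\nabla f(x),\nabla\omega(x)]$ to be at most one, so $(\nabla f)^{-1}(0)\subset \Gamma_{f,\omega}$; this is what will allow the semi-tameness hypothesis to be invoked on any arc in the critical set going to infinity.

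For boundedness, I would argue by contradiction. Suppose there is a sequence $(x_{n})$ in $(\nabla f)^{-1}(0)\setminus \{f=0\}$ with $|x_{n}|\to +\infty$. Applying the semi-algebraic curve selection lemma at infinity to the semi-algebraic set $(\nabla f)^{-1}(0)\setminus \{f=0\}$, one obtains a semi-algebraic arc $\gamma:[T,+\infty)\to (\nabla f)^{-1}(0)\setminus \{f=0\}$ with $|\gamma(t)|\to +\infty$. By the observation above, $f\circ \gamma\equiv c$ for some $c\neq 0$. On the other hand, since $\gamma$ lies in $\Gamma_{f,\omega}$, the semi-tameness hypothesis forces $|f(\gamma(t))|\to 0$ or $+\infty$ as $t\to+\infty$, which contradicts $|f\circ\gamma|\equiv |c|>0$.

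For closedness, suppose $(x_{n})\subset (\nabla f)^{-1}(0)\setminus \{f=0\}$ converges to some $x_{0}\in \mathbb{R}^{n}$. Continuity of $\nabla f$ gives $x_{0}\in (\nabla f)^{-1}(0)$, so it remains to rule out $f(x_{0})=0$. If $f(x_{0})=0$, I would apply the classical semi-algebraic curve selection lemma at $x_{0}$ to obtain a semi-algebraic arc $\gamma:(0,\varepsilon)\to (\nabla f)^{-1}(0)\setminus \{f=0\}$ with $\gamma(t)\to x_{0}$ as $t\to 0$. Then the same constancy principle gives $f\circ \gamma\equiv c\neq 0$, while continuity forces $f(\gamma(t))\to f(x_{0})=0$, a contradiction.

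The delicate point is Step 1: one must ensure that the arc produced by curve selection at infinity lies entirely inside the \emph{semi-algebraic} set $(\nabla f)^{-1}(0)\setminus \{f=0\}$, so that both the constancy of $f\circ\gamma$ and the semi-tameness dichotomy can be applied simultaneously along $\gamma$. This is precisely the content of the curve selection lemma at infinity for semi-algebraic sets, and since $f$ is $C^{2}$-semi-algebraic all the sets involved ($(\nabla f)^{-1}(0)$, $\{f\neq 0\}$, $\Gamma_{f,\omega}$) are indeed semi-algebraic. Once $\gamma$ is obtained with these properties, the contradiction is immediate, and Step 2 is an entirely analogous (and slightly easier) instance of the same pattern.
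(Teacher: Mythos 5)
Your proof is correct. The paper's own argument is shorter and slices the set by level: it fixes a nonzero critical value $\alpha$ and notes that an unbounded sequence in $f^{-1}(\alpha)\cap(\nabla f)^{-1}(0)$ lies in $\Gamma_{f,\omega}$ and has $|f|\equiv|\alpha|\neq 0$, contradicting semi-tameness; compactness of the whole set $(\nabla f)^{-1}(0)\setminus\{f=0\}$ then follows because a $C^2$ semi-algebraic function has only finitely many critical values (a fact the paper leaves implicit, and which also gives closedness for free, since the set is then $(\nabla f)^{-1}(0)\cap f^{-1}(\{\alpha_1,\ldots,\alpha_k\})$, a finite union of compacts). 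You instead manufacture the constant nonzero value along an escaping sequence by curve selection at infinity combined with the observation that $f$ is constant along any $C^1$ arc contained in its critical locus, and you handle closedness by the same device applied locally at a limit point. The two routes are close in spirit --- both terminate in the same contradiction with the semi-tameness dichotomy --- but yours avoids invoking finiteness of the set of critical values at the cost of the curve selection lemma (at infinity and at a point) and of the routine checks that $(\nabla f)^{-1}(0)$ is semi-algebraic and that the selected arcs may be taken differentiable on a subinterval; all of these are standard for $C^2$ semi-algebraic $f$, so there is no gap.
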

\proof Let $\alpha \not= 0$ be a critical value of $f$. Let $x \in f^{-1}(\alpha) \cap ( \nabla f)^{-1}(0)$, $x$ belongs to $\Gamma_{f,\omega}$. If $f^{-1}(\alpha) \cap \nabla f^{-1}(0)$ is not compact then there exists a sequence of points $(x_n)_{n \in \mathbb{N}}$ such that $\vert x_n \vert \rightarrow + \infty$, $f(x_n)=\alpha$ and $\nabla f (x_n)=0$. But $x_n \in \Gamma_{f,\omega}$, this contradicts the semi-tameness of $f$. \endproof

\begin{lemma}
The functions $g_-$ and $g_+$ are semi-tame.
\end{lemma}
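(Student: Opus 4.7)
The plan is to derive semi-tameness of $g_{\pm}$ directly from the standing hypothesis that $f$ is semi-tame, combined with two ingredients that are already in place: the identity $\Gamma_{g_{\pm},\omega}=\Gamma_{f,\omega}$ (observed earlier in the excerpt) and the elementary fact that the perturbation term $\omega(x)^{-k}$ tends to zero as $|x|\to+\infty$, since $\omega(x)\geq 1+\tfrac{1}{2}|x|^{2}$.

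First I would record that $\nabla g_{\pm}(x)=\nabla f(x)\pm k\,\omega(x)^{-k-1}\nabla\omega(x)$, so that $\nabla g_{\pm}(x)$ and $\nabla f(x)$ agree modulo multiples of $\nabla\omega(x)$; this immediately yields $\Gamma_{g_{\pm},\omega}=\Gamma_{f,\omega}$. Now take any sequence $(x_{n})\subset\Gamma_{g_{-},\omega}=\Gamma_{f,\omega}$ with $|x_{n}|\to+\infty$. After extracting a subsequence, one may assume that $|f(x_{n})|$ converges in $[0,+\infty]$, and by semi-tameness of $f$ this limit is either $0$ or $+\infty$. In the first case, $|g_{-}(x_{n})|\leq|f(x_{n})|+\omega(x_{n})^{-k}\to 0$; in the second, $|g_{-}(x_{n})|\geq|f(x_{n})|-\omega(x_{n})^{-k}\to+\infty$. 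Because this dichotomy holds along every subsequence, no accumulation value of $|g_{-}(x_{n})|$ lying in $(0,+\infty)$ is possible, which is precisely semi-tameness of $g_{-}$. The argument for $g_{+}$ is identical after flipping the sign of the correction.

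There is no serious obstacle here: the argument is essentially a perturbation estimate showing that adding a term that vanishes at infinity does not alter the ``$0$ or $+\infty$'' behavior of the absolute value. The only mild care needed is the subsequence extraction that turns the definition (which a priori allows wild oscillation) into an honest limit, after which the case split is immediate and the role of the assumption $f$ semi-tame is completely transparent.
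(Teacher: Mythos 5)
Your proof is correct and follows essentially the same route as the paper: both arguments use the identity $\Gamma_{g_\pm,\omega}=\Gamma_{f,\omega}$, apply the semi-tameness of $f$ to a sequence in this set, and observe that the correction term $\omega(x)^{-k}$ vanishes at infinity so the dichotomy ``$0$ or $+\infty$'' is preserved. Your subsequence extraction is a slightly more careful reading of the definition than the paper's one-line version, but it is the same idea.
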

\proof We know that $\Gamma_{g_-,\omega}=\Gamma_{f,\omega}$. Let $(x_n)_{n \in  \mathbb{N}}$ be a sequence of points in $\Gamma_{g_-,\omega}$ such that $\vert x_n \vert \rightarrow + \infty$. Then $x_n \in \Gamma_{f,\omega}$ and $\vert x_n \vert \rightarrow + \infty$ so $\lim_{n \rightarrow + \infty} \vert f(x_n) \vert =0$ or $+\infty$. But $g_-(x_n)=f(x_n)-\frac{1}{\omega(x_n)^k}$ so 
$$\lim_{n \rightarrow + \infty} \vert g_-(x_n) \vert =0 \hbox{ or }+\infty .$$ \endproof

\begin{corollary}
The sets $(\nabla g_-)^{-1}(0)$ and $(\nabla g_+)^{-1}(0)$ are compact.
\end{corollary}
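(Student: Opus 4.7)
The plan is to decompose each of the two level sets as
$$(\nabla g_\pm)^{-1}(0) \;=\; \bigl((\nabla g_\pm)^{-1}(0) \cap \{g_\pm = 0\}\bigr) \; \sqcup \; \bigl((\nabla g_\pm)^{-1}(0) \setminus \{g_\pm = 0\}\bigr),$$
and to verify that each piece on the right is compact. Since a finite union of compact subsets of $\mathbb{R}^n$ is compact, this will suffice.

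The first piece is exactly the content of the earlier lemma asserting that $(\nabla g_-)^{-1}(0)\cap\{g_-=0\}$ and $(\nabla g_+)^{-1}(0)\cap\{g_+=0\}$ are compact, whose proof rested on the \L ojasiewicz-type estimate $|f(x)| > 1/\omega(x)^k$ on $\Gamma_{f,\omega}\setminus f^{-1}(0)$ for $|x|\gg 1$ (Lemma \ref{first}). For the second piece, I would invoke the lemma stating that if $h$ is semi-tame then $(\nabla h)^{-1}(0)\setminus\{h=0\}$ is compact, applied to $h=g_\pm$. The immediately preceding lemma supplies the required semi-tameness of $g_-$ and $g_+$, deduced from the semi-tameness of $f$ together with the identity $\Gamma_{g_\pm,\omega}=\Gamma_{f,\omega}$ and the fact that $g_\pm - f = \mp 1/\omega^k \to 0$ along any sequence escaping to infinity.

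I do not foresee any genuine obstacle here: the substantive analytic input (the semi-algebraic estimate of Lemma \ref{first}, the asymptotic control of critical points for semi-tame functions, and the semi-tameness of the perturbations $g_\pm$) is already in place, so the corollary is obtained by a one-line union argument. The only minor bookkeeping is to note that both pieces are closed in $\mathbb{R}^n$, which is automatic from the continuity of $\nabla g_\pm$ and $g_\pm$ (recall $f$, and hence $g_\pm$, is $C^2$-semi-algebraic), so that boundedness of each piece upgrades to compactness.
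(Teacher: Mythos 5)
Your proposal is correct and follows exactly the paper's own argument: split $(\nabla g_\pm)^{-1}(0)$ into the part lying on $\{g_\pm=0\}$ (compact by the earlier lemma) and the part off it (compact by the semi-tameness of $g_\pm$ together with the lemma that $(\nabla h)^{-1}(0)\setminus\{h=0\}$ is compact for semi-tame $h$). The closedness remark is a harmless extra detail the paper leaves implicit.
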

\proof The function $g_-$ is semi-tame so $(\nabla g_-)^{-1}(0) \setminus \{g=0 \}$ is compact. We know that $(\nabla g_-)^{-1}(0) \cap \{g=0 \}$ is compact. \endproof

In \cite{Du3}, the author introduced three sets $\Lambda_h^\le$, $\Lambda_h^=$ and $\Lambda_h^\ge$ for a $C^2$ semi-algebraic function $h$. They are defined as follows: for $* \in \{\le,=,\ge \}$,
$$\displaylines{
\quad \Lambda^*_h = \Big\{ \alpha \in \mathbb{R} \ \vert \ \beta \mapsto \chi \big( \hbox{Lk}^\infty(X \cap \{ h * \beta \}) \big) \hbox{ is not constant } \hfill \cr
\hfill \hbox{ in a neighborhood of } \alpha \Big\}. \quad \cr
}$$
By \cite{Du3}, Lemma 3.12, we have  $\Lambda_{g_-}^\le \subset \{0\}$, $\Lambda_{g_-}^\ge \subset \{0\}$ and $\Lambda_{g_-}^= \subset \{0\}$ since $g_-$ is semi-tame.
\begin{proposition}
Let $\alpha_- \in ]-\infty,0[$ and $\alpha_+ \in ]0,+\infty[$. We have 
$$\chi ( {\rm Lk}^\infty ( \{g_- \le \alpha_- \})) + \chi ( {\rm Lk}^\infty ( \{g_- \le \alpha_+ \})) -\chi ( {\rm Lk}^\infty ( \{g_- \le 0  \}))= $$ $$1-{\rm deg}_\infty \nabla g_-.$$
\end{proposition}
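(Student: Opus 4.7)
The plan is to combine a Khimshiashvili-type Morse-theoretic computation on a large $\omega$-ball $\tilde{B}_R$ with the semi-tameness of $g_-$ to control the behaviour at infinity. First I would invoke \cite[Lemma 3.12]{Du3}: since $g_-$ is semi-tame, $\Lambda_{g_-}^{\le} \subset \{0\}$, so $\beta \mapsto \chi({\rm Lk}^\infty(\{g_- \le \beta\}))$ is constant on each of $]-\infty, 0[$ and $]0, +\infty[$. Because $g_-$ is semi-algebraic with compact critical set $(\nabla g_-)^{-1}(0)$, it has only finitely many critical values; we may therefore replace $\alpha_-$ by a regular value below all critical values of $g_-$ and $\alpha_+$ by a regular value above them all, without changing the three Euler characteristics on the left-hand side.

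Next, choose $R \gg 1$ so that: (i) $(\nabla g_-)^{-1}(0) \subset \mathring{\tilde{B}}_R$; (ii) $\alpha_\pm$ and $0$ are regular values of $g_-|_{\tilde{S}_R}$ (achievable because semi-tameness forces the critical values of $g_-|_{\tilde{S}_R}$ to accumulate only near $0$ or $\pm\infty$ as $R \to \infty$); and (iii) for $\alpha \in \{\alpha_-, 0, \alpha_+\}$ the equality $\chi({\rm Lk}^\infty(\{g_- \le \alpha\})) = \chi(\{g_- \le \alpha\} \cap \tilde{S}_R)$ holds. On the manifold-with-boundary $\tilde{B}_R$, Poincar\'e-Hopf at infinity yields $\deg_\infty \nabla g_- = \sum_p (-1)^{{\rm ind}(p)}$ summed over interior critical points. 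Standard Morse theory for manifolds with boundary then gives an identity of the form $\chi(\{g_- \le \alpha_+\} \cap \tilde{B}_R) - \chi(\{g_- \le \alpha_-\} \cap \tilde{B}_R) = \deg_\infty \nabla g_- + (\textrm{boundary correction})$, the correction coming from critical points of $g_-|_{\tilde{S}_R}$ with value in $]\alpha_-, \alpha_+]$. An inclusion–exclusion argument along the three level hypersurfaces $\{g_- = \alpha_-\}$, $\{g_- = 0\}$, $\{g_- = \alpha_+\}$, combined with $\chi(\tilde{B}_R) = 1$ and the standard identity $\chi(M) = \chi(\partial M)/2$ for odd-dimensional compact manifolds with boundary (with its even-dimensional analogue), translates these ball-level relations into the three-term sphere-level combination appearing in the statement.

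The main obstacle is precisely the treatment of the boundary correction. Since semi-tameness only constrains accumulation at $0$, one cannot make the critical points of $g_-|_{\tilde{S}_R}$ with value near $0$ disappear by enlarging $R$; indeed these are exactly the features that allow $\chi({\rm Lk}^\infty(\{g_- \le \beta\}))$ to jump at $\beta = 0$. The asymmetric combination $\chi({\rm Lk}^\infty(\{g_- \le \alpha_-\})) + \chi({\rm Lk}^\infty(\{g_- \le \alpha_+\})) - \chi({\rm Lk}^\infty(\{g_- \le 0\}))$ is specifically designed so that the corresponding boundary contributions cancel—essentially by $\Lambda_{g_-}^{\le} \subset \{0\}$ and the semi-algebraic triviality of $\{g_- \le 0\}$ at infinity—isolating the purely interior invariant $1 - \deg_\infty \nabla g_-$.
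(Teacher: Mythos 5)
Your outline assembles the right ingredients---the constancy of $\beta \mapsto \chi({\rm Lk}^\infty(\{g_- \le \beta\}))$ away from $0$ via $\Lambda_{g_-}^{\le} \subset \{0\}$, Morse theory on a large ball $\tilde{B}_R$, and a Poincar\'e--Hopf count of interior critical points---but the decisive step is asserted rather than proved. You write that the three-term combination is ``specifically designed so that the corresponding boundary contributions cancel,'' yet that cancellation \emph{is} the proposition: the whole content of the statement is that the critical points of $g_-{}_{\vert \tilde{S}_R}$ (equivalently, the points of $\Gamma_{g_-,\omega} \cap \tilde{S}_R$, whose values semi-tameness only forces to accumulate at $0$ or $\pm\infty$) contribute to the three link Euler characteristics in exactly the pattern that leaves $1-{\rm deg}_\infty \nabla g_-$. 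Establishing this requires an explicit bookkeeping of how sublevel sets on $\tilde{S}_R$ change across the value $0$ and how that change matches the difference between $\chi(\{g_- \le \alpha\}\cap\tilde{B}_R)$ and $\chi(\{g_- \le \alpha\}\cap\tilde{S}_R)$; your sketch never pins down this relation, nor where the constant $1$ (presumably $\chi(\tilde{B}_R)$) and the absence of a $(-1)^n$ in front of ${\rm deg}_\infty \nabla g_-$ come from. Secondary gaps: the identity ${\rm deg}_\infty \nabla g_- = \sum_p (-1)^{{\rm ind}(p)}$ needs nondegeneracy of the critical points, which is not given and must be handled by perturbation or by working with local topological degrees directly; and the claim that $0$ can be made a regular value of $g_-{}_{\vert\tilde{S}_R}$ for large $R$ is exactly what semi-tameness does \emph{not} guarantee.

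For comparison, the paper does not prove this proposition at all: it cites Theorem 3.16 of [Du3], where precisely this at-infinity analysis (via the sets $\Lambda_h^{\le}$, $\Lambda_h^{=}$, $\Lambda_h^{\ge}$ and deformation arguments on large spheres) is carried out. So your attempt is more ambitious than the paper's one-line proof, but as written it reduces the theorem to an unproven claim that is equivalent to the theorem itself. To close the gap you would either need to reproduce the argument of [Du3, Theorem 3.16], or cite it as the paper does.
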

\proof See \cite{Du3}, Theorem 3.16. \endproof

\begin{lemma}
Let $\alpha \not= 0$. We have 
$$\chi ( {\rm Lk}^\infty( \{ g_- \le \alpha \} ))= \chi ( {\rm Lk}^\infty( \{ f \le \alpha \} )).$$
\end{lemma}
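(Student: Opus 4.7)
The plan is to carry out the comparison on a fixed large sphere $\tilde{S}_R$, exploiting the fact that on $\tilde{S}_R$ the perturbation $\frac{1}{\omega^k}$ becomes the constant $\frac{1}{R^k}$, and then absorb this small shift via a gradient flow argument justified by the semi-tameness of $f$.

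First I would choose $R \gg 1$ large enough that, thanks to the inclusions $\Lambda^{\le}_{g_-} \subset \{0\}$ and $\Lambda^{\le}_f \subset \{0\}$ (the latter because $f$ is assumed semi-tame, by the same argument as for $g_-$ using \cite{Du3}, Lemma 3.12), the identifications
\[
\chi\bigl({\rm Lk}^\infty(\{g_- \le \alpha\})\bigr) = \chi\bigl(\{g_- \le \alpha\} \cap \tilde{S}_R\bigr), \qquad
\chi\bigl({\rm Lk}^\infty(\{f \le \alpha\})\bigr) = \chi\bigl(\{f \le \alpha\} \cap \tilde{S}_R\bigr)
\]
both hold. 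On $\tilde{S}_R$ we have $\omega \equiv R$, hence the set-theoretic identity
\[
\{g_- \le \alpha\} \cap \tilde{S}_R \;=\; \bigl\{f \le \alpha + \tfrac{1}{R^k}\bigr\} \cap \tilde{S}_R.
\]

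Next I would show that for $R \gg 1$ the restriction $f_{|\tilde{S}_R}$ has no critical values in the closed interval with endpoints $\alpha$ and $\alpha + \tfrac{1}{R^k}$. The idea is: if for some $\delta>0$ there were sequences $R_n \to +\infty$ and $x_n \in \Gamma_{f,\omega} \cap \tilde{S}_{R_n}$ with $\delta \le |f(x_n)| \le \tfrac{1}{\delta}$, then $|x_n|\to +\infty$ with $|f(x_n)|$ bounded away from $0$ and $+\infty$, contradicting the semi-tameness of $f$. Choosing $\delta = |\alpha|/2$ and then forcing $R$ even larger so that $\tfrac{1}{R^k} < |\alpha|/4$, the interval in question lies entirely inside the critical-value-free band.

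A standard gradient flow on $\tilde{S}_R$ (driven by the tangential gradient of $f$) then produces a diffeomorphism
\[
\bigl\{f \le \alpha + \tfrac{1}{R^k}\bigr\} \cap \tilde{S}_R \;\cong\; \{f \le \alpha\} \cap \tilde{S}_R,
\]
and in particular the two Euler characteristics coincide. Chaining the equalities in the order just listed delivers the desired conclusion.

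The main obstacle I anticipate is the bookkeeping around ``$R \gg 1$'': we need a single radius $R$ that simultaneously stabilises the Euler characteristic of the link at infinity for both $g_-$ at level $\alpha$ and $f$ at level $\alpha + \tfrac{1}{R^k}$, while also being large enough that the semi-tameness argument pushes all nearby critical values of $f_{|\tilde{S}_R}$ outside the band $[\alpha,\alpha+\tfrac{1}{R^k}]$. Each condition individually follows from semi-tameness and \cite{Du3}, but one must verify that they can all be achieved at the same $R$; once this is done, the gradient flow step is routine.
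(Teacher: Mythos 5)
Your argument is essentially the paper's own proof: both realize the two links on a common large sphere $\tilde{S}_R$, use the set identity $\{g_-\le\alpha\}\cap\tilde{S}_R=\{f\le\alpha+\frac{1}{R^k}\}\cap\tilde{S}_R$, and invoke semi-tameness of $f$ to show that $\Gamma_{f,\omega}\cap\tilde{S}_R$ avoids the band between $\alpha$ and $\alpha+\frac{1}{R^k}$, whence the two sublevel sets on $\tilde{S}_R$ are homeomorphic by a gradient-flow retraction. The only blemish is the choice $\delta=|\alpha|/2$, which places the band inside $[\delta,1/\delta]$ only when $|\alpha|\le\sqrt{2}$; taking $\delta=\min\left(|\alpha|/2,\,1/(2|\alpha|)\right)$ fixes this without changing anything else.
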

\proof We assume first that $\alpha <0$. Furthermore, we can suppose that $\frac{1}{2} \alpha + 1 <0$ because the function $\beta \mapsto \chi \big( {\rm Lk}^\infty(X \cap \{ f \le \beta \}) \big)$ is constant on $]-\infty,0[$ by the semi-tameness of $f$. The function $g_-$ is semi-tame so if $(x_n)_{n \in \mathbb{N}}$ is a sequence of points in $\Gamma_{g_-,\omega}$ such that $\vert x_n \vert \rightarrow + \infty$ and $g(x_n) \le \frac{1}{2} \alpha $ then $\lim_{n \rightarrow + \infty} g_-(x_n) = -\infty$. Since $\Gamma_{f,\omega}=\Gamma_{g_-,\omega}$ and $f$ is semi-tame, we also have $\lim_{n \rightarrow + \infty} f(x_n) = -\infty$ because $f(x_n) \le \frac{1}{2} \alpha + 1 <0$. 
So there exits $R_0 \gg 1$ such that for all $R \ge R_0$, $f(x) \le 2 \alpha$ and $g(x) \le 2 \alpha$ for $x \in \tilde{S}_R \cap \Gamma_{g_-,\omega} \cap \{ g_- \le \frac{1}{2} \alpha  \}$. 
So ${\rm Lk}^\infty (\{ g_- \le \alpha \}) $ is homeomorphic to  $\{ g_- \le \alpha \} \cap \tilde{S}_R$ and ${\rm Lk}^\infty (\{f \le \alpha \}) $ is homeomorphic to  $\{ f \le \alpha \} \cap \tilde{S}_R$ for any $R \ge R_0$. 
But $\{ g_- \le \alpha \} \cap \tilde{S}_R =  \{ f \le \alpha + \frac{1}{R^k} \}) \cap \tilde{S}_R $. Since $\tilde{S}_R \cap \Gamma_{f,\omega} \cap \{ \alpha \le f \le \alpha +  \frac{1}{R^k} \}$ is empty, the set 
$\{ f \le \alpha + \frac{1}{R^k} \} \cap \tilde{S}_R$ is homeomorphic to $\{ f  \le \alpha \} \cap \tilde{S}_R$. 

If $\alpha >0$ the proof is the same, replacing $\{ g \le \frac{1}{2} \alpha \}$ with $\{ g \ge \frac{1}{2} \alpha \}$ and taking $R$ such that $\alpha + \frac{1}{R^k}  < 2 \alpha$. \endproof

\begin{corollary}
Let $\alpha_- \in ]-\infty,0[$ and $\alpha_+ \in ]0,+\infty[$. We have 
$$\chi ( {\rm Lk}^\infty ( \{f \le \alpha_- \})) + \chi ( {\rm Lk}^\infty ( \{f \le \alpha_+ \})) -\chi ( {\rm Lk}^\infty ( \{f \le 0  \}))= $$ $$1-{\rm deg}_\infty \nabla g_-.$$
\end{corollary}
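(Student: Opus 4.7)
The strategy is to simply substitute the identifications between sublevel sets of $g_-$ and of $f$ into the Proposition stated just above. I would proceed in three steps.

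First, I would recall the Proposition (from \cite{Du3}, Theorem 3.16) that gives
\[
\chi ( {\rm Lk}^\infty ( \{g_- \le \alpha_- \})) + \chi ( {\rm Lk}^\infty ( \{g_- \le \alpha_+ \})) -\chi ( {\rm Lk}^\infty ( \{g_- \le 0  \}))= 1-{\rm deg}_\infty \nabla g_-,
\]
which is applicable because the previous lemma ensures $g_-$ is semi-tame, and its sets $\Lambda_{g_-}^{\le}$, $\Lambda_{g_-}^{=}$, $\Lambda_{g_-}^{\ge}$ are contained in $\{0\}$, so $\alpha_-$ and $\alpha_+$ may be chosen as generic nonzero values.

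Second, I would handle the three terms on the left-hand side separately. For the two terms with nonzero sub-levels $\alpha_-$ and $\alpha_+$, I would invoke the lemma immediately preceding this corollary, which states
\[
\chi({\rm Lk}^\infty(\{g_- \le \alpha\})) = \chi({\rm Lk}^\infty(\{f \le \alpha\})) \qquad \text{for } \alpha \neq 0,
\]
to replace each $\chi({\rm Lk}^\infty(\{g_- \le \alpha_\pm\}))$ by $\chi({\rm Lk}^\infty(\{f \le \alpha_\pm\}))$. For the remaining term with sub-level $0$, I would use the Corollary proved earlier in the excerpt (the one stating $\chi({\rm Lk}^\infty(\{g_- \le 0\})) = \chi({\rm Lk}^\infty(\{f \le 0\}))$), which was obtained via the deformation-retract argument showing $\{f \le 1/R^k\} \cap \tilde S_R$ retracts onto $\{f \le 0\} \cap \tilde S_R$ once $R \gg 1$ (so that no point of $\Gamma_{f,\omega}\setminus f^{-1}(0)$ lies in the strip $0 < f \le 1/R^k$).

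Third, substituting these three equalities into the Proposition yields exactly
\[
\chi ( {\rm Lk}^\infty ( \{f \le \alpha_- \})) + \chi ( {\rm Lk}^\infty ( \{f \le \alpha_+ \})) -\chi ( {\rm Lk}^\infty ( \{f \le 0  \}))= 1-{\rm deg}_\infty \nabla g_-,
\]
which is the claim. There is no genuine obstacle here: all the heavy lifting (the semi-tameness of $g_-$, the compactness of $(\nabla g_-)^{-1}(0)$ needed to make sense of ${\rm deg}_\infty \nabla g_-$, and the retract arguments identifying link Euler characteristics of $g_-$ and $f$ sub-levels) has already been carried out in the preceding lemmas and corollaries; the only mildly delicate point is checking that $\alpha_-$ and $\alpha_+$ may indeed be taken as points at which the link Euler characteristic is locally constant, which is exactly what $\Lambda_{g_-}^{\le} \subset \{0\}$ (and likewise $\Lambda_f^{\le}\subset\{0\}$) guarantees.
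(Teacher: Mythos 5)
Your proposal is correct and follows exactly the route the paper intends: the paper states this corollary without proof precisely because it is the immediate combination of the preceding Proposition (applied to $g_-$), the Lemma identifying $\chi({\rm Lk}^\infty(\{g_-\le\alpha\}))$ with $\chi({\rm Lk}^\infty(\{f\le\alpha\}))$ for $\alpha\neq 0$, and the earlier Corollary giving the same identification at level $0$. Your attention to why $\alpha_\pm$ can be taken generic (via $\Lambda_{g_-}^{\le}\subset\{0\}$) is a reasonable extra check but not a point of divergence.
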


$\hfill \Box$

Similarly, we can state:
\begin{corollary}
Let $\alpha_- \in ]-\infty,0[$ and $\alpha_+ \in ]0,+\infty[$. We have 
$$\chi ( {\rm Lk}^\infty ( \{f \ge \alpha_- \})) + \chi ( {\rm Lk}^\infty ( \{f \ge \alpha_+ \})) -\chi ( {\rm Lk}^\infty ( \{f \ge 0  \}))= $$ $$1-(-1)^n{\rm deg}_\infty \nabla g_+.$$
\end{corollary}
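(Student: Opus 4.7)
The plan is to mirror, step by step, the proof of the previous corollary (the one for $\le$ using $g_-$), with sublevel sets replaced everywhere by superlevel sets and with $g_-=f-\tfrac{1}{\omega^k}$ replaced by $g_+=f+\tfrac{1}{\omega^k}$. Two ingredients are required: a superlevel-set version of the Proposition stated for $g_+$, and a replacement lemma identifying the links at infinity of $\{g_+\ge\alpha\}$ and $\{f\ge\alpha\}$ for $\alpha\ne 0$.

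For the first ingredient, I would apply Theorem~3.16 of \cite{Du3} to the function $-g_+$. This function is again $C^2$ semi-algebraic, and it is semi-tame by the same argument already used for $g_-$: one has $\Gamma_{-g_+,\omega}=\Gamma_{g_+,\omega}=\Gamma_{f,\omega}$ and $|-g_+|=|g_+|$, so the semi-tameness of $f$ is inherited. Using the identity $\{-g_+\le\beta\}=\{g_+\ge-\beta\}$ and the relation $\nabla(-g_+)=-\nabla g_+$, the resulting formula reads
\[
\chi\bigl({\rm Lk}^\infty(\{g_+\ge\alpha_-\})\bigr)+\chi\bigl({\rm Lk}^\infty(\{g_+\ge\alpha_+\})\bigr)-\chi\bigl({\rm Lk}^\infty(\{g_+\ge 0\})\bigr)=1-\deg_\infty\nabla(-g_+).
\]
Since $\nabla g_+$ normalized on $\tilde S_R$ yields a map $S^{n-1}\to S^{n-1}$ and the antipodal involution on $S^{n-1}$ has degree $(-1)^n$, we have $\deg_\infty\nabla(-g_+)=(-1)^n\deg_\infty\nabla g_+$, which produces the predicted sign factor.

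For the second ingredient, I would establish that for every $\alpha\ne 0$,
\[
\chi\bigl({\rm Lk}^\infty(\{g_+\ge\alpha\})\bigr)=\chi\bigl({\rm Lk}^\infty(\{f\ge\alpha\})\bigr),
\]
by a verbatim adaptation of the lemma used in the $g_-$ case. Because $\Gamma_{g_+,\omega}=\Gamma_{f,\omega}$ and both $f$ and $g_+$ are semi-tame, one can choose $R_0\gg 1$ so that on each $\tilde S_R$ with $R\ge R_0$ the perturbation $|g_+-f|=\tfrac{1}{\omega^k}$ is smaller than the distance from $\alpha$ to any critical value of $f|_{\tilde S_R}$ lying on the appropriate side of $\alpha$; then $\{g_+\ge\alpha\}\cap\tilde S_R=\{f\ge\alpha-\tfrac{1}{R^k}\}\cap\tilde S_R$ deformation-retracts onto $\{f\ge\alpha\}\cap\tilde S_R$, and these slices represent the respective links at infinity.

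Substituting the replacement identity into the formula of the first step completes the proof. The main subtlety is the careful bookkeeping of the factor $(-1)^n$: one must verify that converting a superlevel-set statement to the sublevel-set form of Theorem~3.16 (via $g_+\rightsquigarrow -g_+$) multiplies the degree of the normalized gradient at infinity by exactly the degree of the antipodal map on $S^{n-1}$. Once this orientation check is made, every other step is a routine transcription of the arguments already written for $g_-$, and no new analytic input is needed.
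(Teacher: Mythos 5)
Your proposal is correct and follows exactly the route the paper intends: the paper gives no separate argument (it only says ``Similarly''), and the intended content is precisely your reduction to Theorem~3.16 of \cite{Du3} applied to $-g_+$, with the factor $(-1)^n$ arising as the degree of the antipodal map on $S^{n-1}$, together with the superlevel analogue of the replacement lemma. The only point worth making explicit is that the $\alpha=0$ term $\chi({\rm Lk}^\infty(\{g_+\ge 0\}))=\chi({\rm Lk}^\infty(\{f\ge 0\}))$ is not covered by the $\alpha\neq 0$ replacement lemma but by the earlier corollary proved for $g_+$, which the paper has already established.
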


$\hfill \Box$

\begin{corollary}
Let $\alpha_- \in ]-\infty,0[$ and $\alpha_+ \in ]0,+\infty[$. We have 
$$\chi ( {\rm Lk}^\infty ( \{f= \alpha_- \})) + \chi ( {\rm Lk}^\infty ( \{f = \alpha_+ \})) -\chi ( {\rm Lk}^\infty ( \{f = 0  \}))= $$ $$2-\chi(S^{n-1}) - ( {\rm deg}_\infty \nabla g_- +(-1)^n{\rm deg}_\infty \nabla g_+).$$
\end{corollary}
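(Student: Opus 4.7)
The plan is to combine the two preceding corollaries via the standard inclusion-exclusion identity on the sphere at infinity. Recall that for any $\alpha \in \mathbb{R}$ the sets $\{f \le \alpha\}$ and $\{f \ge \alpha\}$ cover $\mathbb{R}^n$ and intersect in $\{f = \alpha\}$. Taking their links at infinity and using that $\chi$ is additive under the appropriate Mayer-Vietoris type decomposition, we obtain the key identity
$$\chi(S^{n-1}) \,=\, \chi\bigl({\rm Lk}^\infty(\{f \ge \alpha\})\bigr) + \chi\bigl({\rm Lk}^\infty(\{f \le \alpha\})\bigr) - \chi\bigl({\rm Lk}^\infty(\{f = \alpha\})\bigr),$$
valid for every $\alpha \in \mathbb{R}$; this is the same identity that was used in the proof of the earlier corollary expressing $\chi({\rm Lk}^\infty(\{f = 0\}))$ in terms of $\deg_\infty L_-$ and $\deg_\infty L_+$.

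The first step is to apply this identity three times, for $\alpha = \alpha_-$, $\alpha = \alpha_+$ and $\alpha = 0$, to rewrite
$$\chi\bigl({\rm Lk}^\infty(\{f = \alpha_-\})\bigr) + \chi\bigl({\rm Lk}^\infty(\{f = \alpha_+\})\bigr) - \chi\bigl({\rm Lk}^\infty(\{f = 0\})\bigr)$$
as
$$\Bigl[\chi\bigl({\rm Lk}^\infty(\{f \le \alpha_-\})\bigr) + \chi\bigl({\rm Lk}^\infty(\{f \le \alpha_+\})\bigr) - \chi\bigl({\rm Lk}^\infty(\{f \le 0\})\bigr)\Bigr]$$
$$+\,\Bigl[\chi\bigl({\rm Lk}^\infty(\{f \ge \alpha_-\})\bigr) + \chi\bigl({\rm Lk}^\infty(\{f \ge \alpha_+\})\bigr) - \chi\bigl({\rm Lk}^\infty(\{f \ge 0\})\bigr)\Bigr] - \chi(S^{n-1}),$$
since the three occurrences of $\chi(S^{n-1})$ coming from the three instances of the identity collapse, by simple arithmetic, into a single $-\chi(S^{n-1})$.

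The second step is then to substitute the values of the two bracketed expressions. The first bracket equals $1 - \deg_\infty \nabla g_-$ by the penultimate corollary, and the second bracket equals $1 - (-1)^n \deg_\infty \nabla g_+$ by the corollary immediately preceding this one. Adding them and subtracting $\chi(S^{n-1})$ yields
$$2 - \chi(S^{n-1}) - \bigl(\deg_\infty \nabla g_- + (-1)^n \deg_\infty \nabla g_+\bigr),$$
which is precisely the right-hand side of the claimed formula.

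There is no genuine obstacle here: the proof is purely a bookkeeping argument that reduces the equality of level-set links to the already proved equalities for sub- and super-level-set links. The only point that deserves a moment's care is the additivity identity on $S^{n-1}$: it needs $\{f \le \alpha\}$ and $\{f \ge \alpha\}$ (and their intersection $\{f = \alpha\}$) to behave well at infinity, which is ensured by the semi-tameness hypothesis on $f$, exactly as used in the derivation of the previous two corollaries.
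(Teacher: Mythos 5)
Your proof is correct and follows exactly the route the paper intends: the paper's own proof is the one-line instruction ``use the previous two corollaries and the Mayer--Vietoris sequence,'' and your argument is precisely the expansion of that instruction, applying the identity $\chi(S^{n-1})=\chi({\rm Lk}^\infty(\{f\ge\alpha\}))+\chi({\rm Lk}^\infty(\{f\le\alpha\}))-\chi({\rm Lk}^\infty(\{f=\alpha\}))$ three times and substituting the two preceding corollaries. The bookkeeping of the $\chi(S^{n-1})$ terms is right, so nothing further is needed.
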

\proof Use the previous two corollaries and the Mayer-Vietoris sequence. \endproof

\underline{Application:} Let $X \subset \mathbb{R}^n$ be a closed semi-algebraic set. There exists a $C^2$-semi-algebraic function $f : \mathbb{R}^n \rightarrow \mathbb{R}$ such that $X=f^{-1}(0)$ and $f \ge 0$. By a change of origin if necessary, we can assume that $f(0)>1$.
\begin{lemma}
There exists a $C^2$-semi-algebraic function $\Phi : \mathbb{R}^n \rightarrow \mathbb{R}$ such that 

\begin{enumerate}

\item  $X=\Phi^{-1}(0)$ and $\Phi \ge 0,$
\vspace{0.2cm}

\item  for any sequence of points $(x_n)_{n \in \mathbb{N}}$ in $\Gamma_{\Phi,\omega} \setminus X$ such that $\vert x_n \vert \rightarrow + \infty$, we have $\lim_{n \rightarrow + \infty} \vert \Phi(x_n) \vert=+\infty .$
\vspace{0.2cm}

\end{enumerate}

Moreover one can take $\Phi = \omega^K f$ with $K \gg 1$.
\end{lemma}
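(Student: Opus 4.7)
The plan is to verify directly that the hinted candidate $\Phi = \omega^K f$ works, for $K$ sufficiently large. Condition~(1) is immediate: since $\omega \ge 1 > 0$ everywhere, one has $\Phi^{-1}(0) = f^{-1}(0) = X$, and $\Phi \ge 0$ follows from $f \ge 0$. Smoothness and semi-algebraicity of $\Phi$ are clear because $\omega^K$ is a polynomial and $f$ is $C^2$ semi-algebraic. The substantive content is (2).

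First I would compute
\[
\nabla \Phi(x) \;=\; K\,\omega(x)^{K-1}\, f(x)\, x \;+\; \omega(x)^K\, \nabla f(x),
\]
using $\nabla \omega = x$. Since $\omega(x) > 0$, this formula shows that, at any $x \ne 0$, the vector $\nabla \Phi(x)$ is a scalar multiple of $x$ if and only if $\nabla f(x)$ is. Therefore $\Gamma_{\Phi,\omega} = \Gamma_{f,\omega}$, and in particular any sequence $(x_n) \subset \Gamma_{\Phi,\omega} \setminus X$ with $|x_n| \to +\infty$ lies eventually in $\Gamma_{f,\omega} \setminus f^{-1}(0)$ (recalling $X = f^{-1}(0)$).

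Next I would invoke Lemma~\ref{first} applied to $f$: there exists $k \in \mathbb{N}$ such that $|f(x)| > \omega(x)^{-k}$ for every $x \in \Gamma_{f,\omega} \setminus f^{-1}(0)$ with $|x|$ sufficiently large. (Observe that $0$ is a critical value of $f$, since $f\ge 0$ and $f\equiv 0$ on $X$, so the hypotheses of Lemma~\ref{first} are met.) Fixing any $K > k$, we then obtain, for $n$ large,
\[
|\Phi(x_n)| \;=\; \omega(x_n)^{K}\, |f(x_n)| \;>\; \omega(x_n)^{K-k},
\]
and the right-hand side tends to $+\infty$ because $\omega(x_n) \to +\infty$, establishing (2).

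The only nontrivial step is the identification $\Gamma_{\Phi,\omega} = \Gamma_{f,\omega}$; this invariance of the ``ratio locus'' under multiplication of $f$ by a positive power of $\omega$ is what allows the estimate from Lemma~\ref{first} to be transferred from $f$ to $\Phi$. Once that equality is in hand, the explicit lower bound on $|f|$ makes the choice $K > k$ force the desired divergence, and any such $K$ works.
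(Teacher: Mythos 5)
Your argument is correct and matches the paper's intent: the paper's proof is simply ``same proof as Lemma~\ref{first},'' i.e.\ the polynomial lower bound $|f|>\omega^{-k}$ on $\Gamma_{f,\omega}\setminus f^{-1}(0)$ at infinity, which you apply directly after the (correct and necessary) observation that $\Gamma_{\omega^K f,\omega}=\Gamma_{f,\omega}$ because the extra term in $\nabla(\omega^K f)$ is itself a multiple of $x=\nabla\omega$. Choosing $K>k$ then forces $|\Phi(x_n)|>\omega(x_n)^{K-k}\to+\infty$ exactly as you say, so nothing is missing.
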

\proof Same proof as Lemma \ref{first} (see also \cite{Du2}, Proposition 7.1). \endproof

\begin{corollary}
The function $\Phi$ is semi-tame.
\end{corollary}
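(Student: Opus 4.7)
The statement is essentially a packaging corollary: it combines the defining property $\Phi^{-1}(0) = X$ from item~(1) of the preceding lemma with the growth estimate on $|\Phi|$ along the polar curve $\Gamma_{\Phi,\omega}\setminus X$ from item~(2). The plan is to take an arbitrary escaping sequence in $\Gamma_{\Phi,\omega}$, split it according to whether it meets $X$ or not, and treat each piece separately.

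Let $(x_n) \subset \Gamma_{\Phi,\omega}$ be a sequence with $|x_n| \to +\infty$. Writing $\Gamma_{\Phi,\omega} = (\Gamma_{\Phi,\omega}\cap X) \sqcup (\Gamma_{\Phi,\omega}\setminus X)$, we may pass to a subsequence and assume that either (a)~$x_n \in X$ for all $n$, or (b)~$x_n \in \Gamma_{\Phi,\omega}\setminus X$ for all $n$; this is the standard reduction made in every semi-tameness verification in the paper, and it suffices since semi-tameness is a statement about accumulation values of $|\Phi(x_n)|$ in $[0,+\infty]$. In case~(a), the identity $X = \Phi^{-1}(0)$ immediately yields $|\Phi(x_n)| = 0$, so the subsequential limit is~$0$. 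In case~(b), item~(2) of the preceding lemma applies verbatim and produces $|\Phi(x_n)| \to +\infty$.

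Combining the two cases, every convergent (in $[0,+\infty]$) subsequence of $|\Phi(x_n)|$ has limit either~$0$ or~$+\infty$, which is precisely the definition of semi-tameness recalled earlier. There is no real obstacle to overcome here, since all the analytic content, in particular the Lojasiewicz-type estimate forcing $|\Phi(x_n)|\to+\infty$ along $\Gamma_{\Phi,\omega}\setminus X$, has been absorbed into the choice of the exponent $K \gg 1$ made when constructing $\Phi = \omega^K f$. The corollary should thus be stated together with a one-line proof referring back to items~(1) and~(2) of the preceding lemma, in direct analogy with the earlier proof that $g_-$ and $g_+$ are semi-tame.
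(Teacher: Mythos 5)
Your proof is correct and matches the paper's intent: the paper states this corollary without proof precisely because it is the immediate consequence of items (1) and (2) of the preceding lemma that you spell out (points of $\Gamma_{\Phi,\omega}$ in $X=\Phi^{-1}(0)$ give value $0$, points outside give $|\Phi|\to+\infty$ by item (2)). The subsequence reduction you make to handle the ``limit is $0$ or $+\infty$'' phrasing is the standard reading of the definition, so there is nothing further to add.
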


$\hfill \Box$

Let us remark that 
\begin{enumerate}
\item $\{ \Phi \le \alpha_-\}$  and $\{ \Phi = \alpha_-\}$ are empty,
\item $\{ \Phi \ge \alpha_-\}=\{ \Phi \ge 0\}$,
\item $\{ \Phi \le 0 \} = \{ \Phi=0\}$,
\item If $\alpha_+>0$ is small enough then ${\rm Lk}^\infty ( \{ \Phi \le \alpha_+ \})$ retracts by deformation to ${\rm Lk}^\infty ( \{ \Phi= 0 \})$.
\end{enumerate}
In this situation, the above corollaries applied to $\Phi$ become the following corollaries.
\begin{corollary}
We have ${\rm deg}_\infty \nabla g_- =1$.
\end{corollary}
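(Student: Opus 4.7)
The plan is to invoke the corollary displayed just before the statement, namely
$$\chi({\rm Lk}^\infty(\{\Phi\le\alpha_-\}))+\chi({\rm Lk}^\infty(\{\Phi\le\alpha_+\}))-\chi({\rm Lk}^\infty(\{\Phi\le 0\}))=1-{\rm deg}_\infty\nabla g_-,$$
where $g_-=\Phi-\frac{1}{\omega^k}$, and then simply read off each of the three Euler characteristics on the left-hand side from the four observations recorded just above the statement.

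First I would fix $\alpha_-<0$ and a small $\alpha_+>0$ (small enough so that observation (4) applies). Observation (1) gives $\{\Phi\le\alpha_-\}=\varnothing$, whence $\chi({\rm Lk}^\infty(\{\Phi\le\alpha_-\}))=0$. Observation (3) gives $\{\Phi\le 0\}=\{\Phi=0\}=X$, so $\chi({\rm Lk}^\infty(\{\Phi\le 0\}))=\chi({\rm Lk}^\infty(X))$. Observation (4) gives that ${\rm Lk}^\infty(\{\Phi\le\alpha_+\})$ retracts by deformation onto ${\rm Lk}^\infty(\{\Phi=0\})={\rm Lk}^\infty(X)$, so the two Euler characteristics agree.

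Substituting these three values into the displayed corollary, the two terms involving $\chi({\rm Lk}^\infty(X))$ cancel and we are left with $0=1-{\rm deg}_\infty\nabla g_-$, i.e.\ ${\rm deg}_\infty\nabla g_-=1$, which is the claim.

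There is essentially no obstacle: the only thing to verify is that the hypotheses of the preceding corollary are satisfied for $\Phi$ in place of $f$, and this is immediate because the preceding lemma states that $\Phi$ is semi-tame and the $C^2$ semi-algebraic function $\Phi=\omega^K f$ inherits $\Phi(0)>1$ from $f(0)>1$ (for $K\gg 1$), so all the constructions of $g_-$, $g_+$ and the ensuing degree formulae are legal. The only mild point worth checking is that $\alpha_+$ can indeed be chosen small enough to make observation (4) hold simultaneously with the semi-tameness input used in the preceding corollary, but since $\Lambda^\le_\Phi\subset\{0\}$ by semi-tameness, any sufficiently small positive $\alpha_+$ works.
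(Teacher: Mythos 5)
Your proposal is correct and is exactly the argument the paper intends: the corollary is stated with an empty proof precisely because it follows by substituting the four preceding observations about $\Phi$ into the degree formula $\chi({\rm Lk}^\infty(\{\Phi\le\alpha_-\}))+\chi({\rm Lk}^\infty(\{\Phi\le\alpha_+\}))-\chi({\rm Lk}^\infty(\{\Phi\le 0\}))=1-{\rm deg}_\infty\nabla g_-$, whereupon the left-hand side collapses to $0+\chi({\rm Lk}^\infty(X))-\chi({\rm Lk}^\infty(X))=0$. Your verification that $\Phi$ satisfies the needed hypotheses (semi-tameness and $\Phi(0)=f(0)>1$ since $\omega(0)=1$) matches what the paper leaves implicit.
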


$\hfill \Box$

\begin{corollary}
 We have 
$$ \chi ( {\rm Lk}^\infty ( \{\Phi \ge \alpha_+ \})) = 1-(-1)^n{\rm deg}_\infty \nabla g_+.$$
\end{corollary}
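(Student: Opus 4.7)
The plan is to apply the preceding corollary (the one that expresses
$\chi({\rm Lk}^\infty(\{f\ge\alpha_-\}))+\chi({\rm Lk}^\infty(\{f\ge\alpha_+\}))-\chi({\rm Lk}^\infty(\{f\ge 0\}))$
in terms of $(-1)^n{\rm deg}_\infty\nabla g_+$) to the semi-tame function $\Phi=\omega^K f$ in place of $f$. The previous corollary asserting that $\Phi$ is semi-tame ensures that hypothesis is met, so the formula directly specializes to
$$\chi({\rm Lk}^\infty(\{\Phi\ge\alpha_-\}))+\chi({\rm Lk}^\infty(\{\Phi\ge\alpha_+\}))-\chi({\rm Lk}^\infty(\{\Phi\ge 0\}))=1-(-1)^n{\rm deg}_\infty\nabla g_+,$$
where, of course, the gradient vector field $g_+$ on the right-hand side is the one built from $\Phi$ rather than from the original $f$.

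Next, I would invoke the remarks listed just before the statement. Since $\alpha_-<0$ and $\Phi\ge 0$, we have the set equality $\{\Phi\ge\alpha_-\}=\{\Phi\ge 0\}=\mathbb{R}^n$ from item (2) of the list. Consequently $\chi({\rm Lk}^\infty(\{\Phi\ge\alpha_-\}))=\chi({\rm Lk}^\infty(\{\Phi\ge 0\}))$, and these two Euler characteristics cancel on the left-hand side, leaving exactly
$$\chi({\rm Lk}^\infty(\{\Phi\ge\alpha_+\}))=1-(-1)^n{\rm deg}_\infty\nabla g_+,$$
which is the desired identity.

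There is essentially no obstacle here: the whole content has already been done in the general corollary for a semi-tame $C^2$ semi-algebraic function, and the only thing specific to $\Phi$ is that its nonnegativity forces two of the three link-at-infinity terms to coincide. The one bookkeeping subtlety is to make sure $\alpha_-$ is indeed chosen in $]-\infty,0[$ so that $\{\Phi\ge\alpha_-\}$ is genuinely the whole ambient space (equivalently, so that the general formula is being applied in its stated range of parameters), but this is exactly what the setup guarantees.
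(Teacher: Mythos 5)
Your proof is correct and is exactly the argument the paper intends (the paper states the result without written proof, saying only that "the above corollaries applied to $\Phi$ become the following corollaries"): you apply the general semi-tame corollary for $\chi({\rm Lk}^\infty(\{\cdot\ge\alpha\}))$ to $\Phi$ and use remark (2), $\{\Phi\ge\alpha_-\}=\{\Phi\ge 0\}$, to cancel two of the three terms. Your observation that $g_+$ must be built from $\Phi$ rather than $f$ is also consistent with the paper's final proposition, where $g_+(x)=\omega(x)^K f_1^2(x)+\frac{1}{\omega(x)^k}$.
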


$\hfill \Box$

\begin{corollary}
We have 
$$ \chi ( {\rm Lk}^\infty ( \{\Phi = \alpha_+ \})) -\chi ( {\rm Lk}^\infty ( X ))= 1-\chi(S^{n-1}) - (-1)^n{\rm deg}_\infty \nabla g_+.$$
\end{corollary}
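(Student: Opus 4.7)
The plan is to specialize the previous Mayer--Vietoris--type corollary (the one expressing $\chi({\rm Lk}^\infty(\{f=\alpha_-\}))+\chi({\rm Lk}^\infty(\{f=\alpha_+\}))-\chi({\rm Lk}^\infty(\{f=0\}))$ in terms of ${\rm deg}_\infty \nabla g_-$ and ${\rm deg}_\infty\nabla g_+$) to the specific function $\Phi$ constructed in the preceding lemma, and then to collapse three of the terms on the left by using the listed properties of $\Phi$.

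First I would verify that $\Phi$ satisfies the hypotheses of that corollary: $\Phi$ is $C^2$-semi-algebraic by construction, and the corollary above states that $\Phi$ is semi-tame, which is precisely the assumption under which the $\chi({\rm Lk}^\infty(\{f=\alpha\}))$ formula was derived. The previous corollary also provides the initial value ${\rm deg}_\infty \nabla g_-=1$ for this $\Phi$, which will be the only input needed about $g_-$ here.

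Next I would read off the left-hand side using items (1)--(4) in the remark preceding the statement: since $\{\Phi\le\alpha_-\}$ and hence $\{\Phi=\alpha_-\}$ are empty, the term $\chi({\rm Lk}^\infty(\{\Phi=\alpha_-\}))$ vanishes; since $\{\Phi=0\}=X$, the term $\chi({\rm Lk}^\infty(\{\Phi=0\}))$ becomes $\chi({\rm Lk}^\infty(X))$; and the remaining term $\chi({\rm Lk}^\infty(\{\Phi=\alpha_+\}))$ is untouched. Substituting these, together with ${\rm deg}_\infty\nabla g_-=1$, into the right-hand side of the earlier corollary gives
\[
\chi({\rm Lk}^\infty(\{\Phi=\alpha_+\}))-\chi({\rm Lk}^\infty(X))=2-\chi(S^{n-1})-\bigl(1+(-1)^n{\rm deg}_\infty\nabla g_+\bigr),
\]
which rearranges to the claimed identity $1-\chi(S^{n-1})-(-1)^n{\rm deg}_\infty\nabla g_+$.

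There is essentially no obstacle to overcome: the heavy lifting (the general Mayer--Vietoris formula for $\chi({\rm Lk}^\infty(\{f=\alpha\}))$, the compactness of $L_\pm^{-1}(0)$ and $(\nabla g_\pm)^{-1}(0)$, and the semi-tameness of $\Phi$) has already been done in the preceding corollaries. The only care required is bookkeeping: making sure that $\alpha_-$ is chosen so that $\{\Phi=\alpha_-\}=\emptyset$ (any $\alpha_-<0$ works because $\Phi\ge 0$) and that $\alpha_+>0$ is small enough for item (4) of the remark to hold, which is exactly what the setup of the specialization to $\Phi$ already guarantees.
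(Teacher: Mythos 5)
Your proposal is correct and is exactly the argument the paper intends: the paper states that ``the above corollaries applied to $\Phi$ become the following corollaries,'' and your specialization of the Mayer--Vietoris corollary to $\Phi$ using properties (1)--(4) together with ${\rm deg}_\infty \nabla g_- = 1$ is precisely that computation. The bookkeeping $2-\chi(S^{n-1})-(1+(-1)^n{\rm deg}_\infty\nabla g_+)=1-\chi(S^{n-1})-(-1)^n{\rm deg}_\infty\nabla g_+$ checks out.
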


$\hfill \Box$

Here we can state the global version of Szafraniec's theorem \cite{Sz1}.
\begin{corollary}\label{GlobalSza}
We have 
$$\chi ( {\rm Lk}^\infty ( X ))= 1 - {\rm deg}_\infty \nabla g_+.$$
\end{corollary}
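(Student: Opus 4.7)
The plan is to compute $\chi({\rm Lk}^\infty(X))$ via a Mayer--Vietoris decomposition of the sphere $\tilde{S}_R$ for $R \gg 1$, distinguishing the parities of $n$. For $\alpha_+ > 0$ chosen small and regular for both $\Phi$ and (thanks to semi-tameness and Sard's theorem) the restriction $\Phi_{|\tilde{S}_R}$, I would set
$$A = \tilde{S}_R \cap \{\Phi \le \alpha_+\}, \qquad B = \tilde{S}_R \cap \{\Phi \ge \alpha_+\}, \qquad A \cap B = \tilde{S}_R \cap \{\Phi = \alpha_+\},$$
so that $B$ is a compact smooth manifold with boundary $A \cap B$, and $A \cup B = \tilde{S}_R$. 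Inclusion--exclusion then gives
$$\chi(S^{n-1}) = \chi(A) + \chi(B) - \chi(A \cap B).$$
By item $(4)$ of the application, $\chi(A) = \chi({\rm Lk}^\infty(X))$; by the earlier corollary evaluating $\chi({\rm Lk}^\infty(\{\Phi \ge \alpha_+\}))$, we have $\chi(B) = 1 - (-1)^n\,{\rm deg}_\infty \nabla g_+$.

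If $n$ is odd, then $A \cap B$ is a closed smooth manifold of odd dimension $n - 2$, so $\chi(A \cap B) = 0$; combined with $\chi(S^{n-1}) = 2$, Mayer--Vietoris yields directly $\chi({\rm Lk}^\infty(X)) = 2 - (1 + {\rm deg}_\infty \nabla g_+) = 1 - {\rm deg}_\infty \nabla g_+$.

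If $n$ is even, then $B$ is a compact manifold with boundary of odd dimension $n - 1$. I would then invoke the classical doubling identity $\chi(\partial B) = 2\chi(B)$, which follows because the double $D(B) = B \cup_{\partial B} B$ is a closed odd-dimensional manifold and hence has Euler characteristic zero, together with the Mayer--Vietoris identity $\chi(D(B)) = 2\chi(B) - \chi(\partial B)$. This forces $\chi(A \cap B) = 2(1 - {\rm deg}_\infty \nabla g_+)$, and substituting into the earlier Mayer--Vietoris identity with $\chi(S^{n-1}) = 0$ again gives $\chi({\rm Lk}^\infty(X)) = 1 - {\rm deg}_\infty \nabla g_+$.

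The main technical obstacle is justifying that $\alpha_+$ can indeed be chosen as a regular value of $\Phi_{|\tilde{S}_R}$, so that $B$ and $\partial B$ are genuine smooth manifolds of the expected dimensions; this is precisely where the semi-tameness of $\Phi$ (established in the lemma just before the corollary) enters, since it confines the asymptotic critical values of $\Phi$ to $\{0,+\infty\}$ and hence permits $\alpha_+ > 0$ to be picked arbitrarily small while avoiding the critical values of $\Phi_{|\tilde{S}_R}$ for sufficiently large $R$.
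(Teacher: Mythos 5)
Your proof is correct and follows essentially the same route as the paper: the paper's own argument plugs the identities $\chi(\mathrm{Lk}^\infty(\{\Phi=\alpha_+\}))=0$ ($n$ odd) and $\chi(\mathrm{Lk}^\infty(\{\Phi=\alpha_+\}))=2\chi(\mathrm{Lk}^\infty(\{\Phi\ge\alpha_+\}))$ ($n$ even) into the previously stated corollary $\chi(\mathrm{Lk}^\infty(\{\Phi=\alpha_+\}))-\chi(\mathrm{Lk}^\infty(X))=1-\chi(S^{n-1})-(-1)^n\,\mathrm{deg}_\infty\nabla g_+$, which is exactly the Mayer--Vietoris relation you rederive directly from the decomposition $\tilde{S}_R=A\cup B$. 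The only difference is organizational (you bypass the intermediate corollary and invoke item (4) of the application explicitly), and your closing remark on choosing $\alpha_+$ regular for $\Phi_{|\tilde S_R}$ is the same implicit use of semi-tameness the paper relies on.
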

\proof If $n$ is odd, ${\rm Lk}^\infty (\{ \Phi= \alpha_+ \})$ is a compact odd-dimensional manifold and $\chi(S^{n-1})=2$.

If $n$ is even, $\chi(S^{n-1})=0$. Moreover 
$$\chi( {\rm Lk}^\infty (\{ \Phi= \alpha_+ \})) = 2 \chi ({\rm Lk}^\infty (\{ \Phi \ge  \alpha_+ \}))= 2 -2 {\rm deg}_\infty \nabla g_+.$$
\endproof

\subsubsection{Applications for the global Milnor fiber on the spheres}

Let $F=(f_{1}, \cdots, f_{p}):\mathbb{R}^{n}\to \mathbb{R}^{p}$ be a $C^{2}$ semi-algebraic mapping such that Conditions (A) and (B) hold. In \cite{DAAC} Corollary 4.3, the authors proved a global version of Corollary \ref{milnorformula}.
\begin{proposition}
Let $j \in \{1,\ldots,p\}$. If $n$ is even, then we have $\chi({\rm Lk}^{\infty}(f_j^{-1}(0))= 2 \chi(\mathcal{M}_{\frac{F}{\|F\|}}^{S})$ and if $n$ is odd, then we have $\chi({\rm Lk}^{\infty}(f_j^{-1}(0))=2-2\chi(\mathcal{M}_{\frac{F}{\|F\|}}^{S})$.
\end{proposition}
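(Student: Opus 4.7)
The plan is to exploit the fibration structure of $F/\|F\|$ from Theorem~\ref{t:sing} and to decompose the sphere $S_R^{n-1}$ according to the sign of the coordinate $f_j$. By Theorem~\ref{t:sing} applied to $F$, for all $R\gg 1$ we have the smooth locally trivial fibration
$$\frac{F}{\|F\|}: S_R^{n-1} \setminus {\rm Lk}^{\infty}(V) \to S^{p-1},$$
with fiber $\mathcal{M}_{\frac{F}{\|F\|}}^{S}$, where $V = F^{-1}(0)$. Set $V_j = f_j^{-1}(0)$, $H_\pm = \{y \in S^{p-1}: \pm y_j > 0\}$, and $P_\pm = \{x \in S_R^{n-1}: \pm f_j(x) > 0\}$. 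Since $f_j$ vanishes on ${\rm Lk}^{\infty}(V) \subseteq {\rm Lk}^{\infty}(V_j)$, each $P_\pm$ sits inside $S_R^{n-1} \setminus {\rm Lk}^{\infty}(V)$ and is exactly the preimage of $H_\pm$ under $F/\|F\|$. This yields a disjoint semi-algebraic decomposition
$$S_R^{n-1} = P_+ \sqcup P_- \sqcup \bigl(V_j\cap S_R^{n-1}\bigr),$$
whose last piece has Euler characteristic $\chi({\rm Lk}^{\infty}(V_j))$ for $R\gg 1$.

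Next, I work with the o-minimal (equivalently, compactly supported) Euler characteristic $\chi_c$, which is finite on semi-algebraic sets, coincides with $\chi$ on compact ones, is additive on disjoint semi-algebraic unions, and is multiplicative along locally trivial fibrations whose fiber has finite type. Additivity gives
$$\chi(S_R^{n-1}) = \chi_c(P_+) + \chi_c(P_-) + \chi\bigl({\rm Lk}^{\infty}(V_j)\bigr),$$
and multiplicativity applied to the restricted fibration $F/\|F\|: P_\pm \to H_\pm$, together with $H_\pm \cong \mathbb{R}^{p-1}$ and $\chi_c(\mathbb{R}^k) = (-1)^k$, yields
$$\chi_c(P_\pm) = \chi_c(H_\pm)\cdot \chi_c\bigl(\mathcal{M}_{\frac{F}{\|F\|}}^{S}\bigr) = (-1)^{p-1}\chi_c\bigl(\mathcal{M}_{\frac{F}{\|F\|}}^{S}\bigr).$$

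Finally, the fiber $\mathcal{M}_{\frac{F}{\|F\|}}^{S}$ is an orientable smooth manifold of dimension $n-p$ — orientable because it is a fiber of a smooth submersion from the orientable open subset $S_R^{n-1}\setminus{\rm Lk}^{\infty}(V) \subset S^{n-1}$ — so Poincar\'e duality for non-compact orientable manifolds gives
$$\chi_c\bigl(\mathcal{M}_{\frac{F}{\|F\|}}^{S}\bigr) = (-1)^{n-p}\chi\bigl(\mathcal{M}_{\frac{F}{\|F\|}}^{S}\bigr).$$
Assembling everything,
$$\chi\bigl({\rm Lk}^{\infty}(V_j)\bigr) = \chi(S_R^{n-1}) - 2(-1)^{n-1}\chi\bigl(\mathcal{M}_{\frac{F}{\|F\|}}^{S}\bigr).$$
When $n$ is even, $\chi(S_R^{n-1}) = 0$ and $(-1)^{n-1} = -1$, so $\chi({\rm Lk}^{\infty}(V_j)) = 2\chi(\mathcal{M}_{\frac{F}{\|F\|}}^{S})$; when $n$ is odd, $\chi(S_R^{n-1}) = 2$ and $(-1)^{n-1} = +1$, so $\chi({\rm Lk}^{\infty}(V_j)) = 2 - 2\chi(\mathcal{M}_{\frac{F}{\|F\|}}^{S})$.

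The main subtlety is the non-compactness of the open pages $P_\pm$: their closures can acquire singularities along ${\rm Lk}^{\infty}(V_j)$, so a naive Mayer--Vietoris argument on $\overline{P_+}\cup \overline{P_-}$ does not relate $\chi(\overline{P_\pm})$ to $\chi(\mathcal{M}_{\frac{F}{\|F\|}}^{S})$ in a clean way. Working throughout with $\chi_c$ handles this correctly (because it plays well with both the disjoint decomposition and the restricted fibration over the contractible base $H_\pm$), and Poincar\'e duality converts $\chi_c$ of the fiber back to the usual $\chi$ demanded in the statement.
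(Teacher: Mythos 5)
Your proof is correct, but it is genuinely different from what the paper does: the paper offers no argument for this proposition at all, closing it with a box and deferring entirely to [DAAC, Corollary 4.3]; and the local model it mirrors (Corollary \ref{milnorformula}, derived from Proposition \ref{CharL1}, itself quoted from [DA]) is obtained by working in the Milnor tube, i.e.\ by comparing the links of the zero sets $f_I^{-1}(0)$ through the tube fibration and the tube fiber $\mathcal{M}_F^T$. You instead work entirely on the large sphere: you partition $S_R^{n-1}$ by the sign of $f_j$, identify $P_\pm$ with the preimages of the open hemispheres of $S^{p-1}$ under the locally trivial fibration $\frac{F}{\|F\|}$, and then combine additivity of the compactly supported Euler characteristic, its multiplicativity over the contractible hemispheres, and Poincar\'e duality for the orientable $(n-p)$-dimensional fiber. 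This buys a short, self-contained derivation that never mentions the tube or $\mathcal{M}_F^T$ and makes transparent why only the parity of $n$ matters (the signs $(-1)^{p-1}$ and $(-1)^{n-p}$ combine to $(-1)^{n-1}$). Two small caveats you should flag: the fibration you invoke is stated in the paper (Theorem \ref{t:sing}) for polynomial maps with $\codim_{\bR} V = p$ at infinity, whereas the proposition is asserted for $C^2$ semi-algebraic maps under Conditions (A) and (B) only, so you must appeal to the semi-algebraic version of the fibration theorem from [DAAC] rather than to Theorem \ref{t:sing} verbatim; and the identification of the fibers over $H_+$ and over $H_-$ with a single $\mathcal{M}^{S}_{\frac{F}{\|F\|}}$ uses connectedness of $S^{p-1}$, hence the standing assumption $p\ge 2$ of this section. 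Neither affects the validity of the computation.
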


$\hfill \Box$

Applying this result and Corollary \ref{GlobalSza}, we can state a Poincar\'e-Hopf formula for $\mathcal{M}_{\frac{F}{\|F\|}}^{S}$.
\begin{proposition}
Given $F=(f_1,\ldots,f_p): (\mathbb{R}^n,0) \rightarrow (\mathbb{R}^p,0),$ $1\leq p \leq n-1,$ a $C^2$ semi-algebraic mapping such that  Conditions (A) and (B) hold. Assume for instance that $f_{1}(0)>1.$
There exist two integers $K,k >0$ such that if $g_+(x)=\omega(x)^K f_{1}^{2}(x)+ \frac{1}{\omega(x)^k}$ then we have, 

\begin{enumerate}
\item [(i)] if $n$ is even, then $\chi(\mathcal{M}_{\frac{F}{\|F\|}}^{S})=\frac{1}{2}(1-\deg_{\infty} \nabla g_+);$
\item [(ii)] if $n$ is odd, then $\chi(\mathcal{M}_{\frac{F}{\|F\|}}^{S})=\frac{1}{2}(1+\deg_{\infty} \nabla g_+)$.
\end{enumerate}
\end{proposition}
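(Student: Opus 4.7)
The plan is to reduce the computation of $\chi(\mathcal{M}_{\frac{F}{\|F\|}}^{S})$ to the computation of the Euler characteristic of the link at infinity of the hypersurface $f_1^{-1}(0)$, and then to evaluate that Euler characteristic via the global Szafraniec-type formula (Corollary \ref{GlobalSza}). All of the machinery in the preceding subsection has been set up for exactly this reduction, so very little new work is needed.

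First I would apply the previous proposition to the index $j=1$. Under Conditions (A) and (B), this gives
\begin{equation*}
\chi({\rm Lk}^{\infty}(f_1^{-1}(0))) =
\begin{cases}
2\,\chi(\mathcal{M}_{\frac{F}{\|F\|}}^{S}), & n \text{ even},\\
2-2\,\chi(\mathcal{M}_{\frac{F}{\|F\|}}^{S}), & n \text{ odd}.
\end{cases}
\end{equation*}
So it remains to compute $\chi({\rm Lk}^{\infty}(f_1^{-1}(0)))$ in terms of a topological degree. Set $X:=f_1^{-1}(0)\subset \mathbb{R}^n$. Then $X$ is a closed semi-algebraic set, and the natural auxiliary function giving $X$ as a zero set with nonnegative values is $f:=f_1^{2}$; since $f_1(0)>1$ by hypothesis, we have $f(0)=f_1(0)^{2}>1$, so the normalization $f(0)>1$ required to invoke the construction preceding Corollary \ref{GlobalSza} is satisfied.

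Next I would apply the lemma producing a semi-tame representative $\Phi$: there exist integers $K\gg 1$ such that $\Phi(x):=\omega(x)^{K}f_{1}^{2}(x)$ is a $C^{2}$ semi-algebraic semi-tame function with $X=\Phi^{-1}(0)$ and $\Phi\ge 0$. Then the associated perturbed function, in the notation of the preceding arguments, is
\[
g_+(x) = \Phi(x) + \frac{1}{\omega(x)^{k}} = \omega(x)^{K}f_{1}^{2}(x) + \frac{1}{\omega(x)^{k}},
\]
for an appropriate integer $k$ chosen (via Lemma \ref{first} applied to $\Phi$) so that $\vert \Phi(x)\vert > 1/\omega(x)^k$ on $\Gamma_{\Phi,\omega}\setminus\Phi^{-1}(0)$ for $\vert x\vert \gg 1$. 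Corollary \ref{GlobalSza} then gives directly
\[
\chi({\rm Lk}^{\infty}(X)) = 1 - \deg_{\infty}\nabla g_+.
\]

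Finally I would substitute this into the two cases coming from the first step. When $n$ is even, we obtain $2\chi(\mathcal{M}_{\frac{F}{\|F\|}}^{S}) = 1-\deg_{\infty}\nabla g_+$, which yields (i). When $n$ is odd, we obtain $2-2\chi(\mathcal{M}_{\frac{F}{\|F\|}}^{S}) = 1-\deg_{\infty}\nabla g_+$, equivalently $\chi(\mathcal{M}_{\frac{F}{\|F\|}}^{S})=\tfrac{1}{2}(1+\deg_{\infty}\nabla g_+)$, which is (ii). Because each of the ingredients (the previous proposition, the global Szafraniec-type corollary, and the lemma producing the semi-tame $\Phi$) is already established in the text, there is no genuine obstacle; the only point that requires minor care is the bookkeeping of the two integer exponents $K$ and $k$, choosing $K$ large enough for $\Phi$ to be semi-tame and then $k$ large enough so that the inequality in Lemma \ref{first} holds for $\Phi$, exactly as in the construction preceding Corollary \ref{GlobalSza}.
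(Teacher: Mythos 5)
Your proposal is correct and follows exactly the route the paper intends: apply the preceding proposition with $j=1$ to express $\chi({\rm Lk}^{\infty}(f_1^{-1}(0)))$ in terms of $\chi(\mathcal{M}_{\frac{F}{\|F\|}}^{S})$, then evaluate that link via Corollary \ref{GlobalSza} applied to $X=f_1^{-1}(0)$ with $f=f_1^2$ and $\Phi=\omega^K f_1^2$. The paper gives no more than the one-line remark ``Applying this result and Corollary \ref{GlobalSza}''; your write-up fills in the same bookkeeping correctly.
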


The authors would like to acknowledge Professor Takashi Nishimura for invite them to submit a paper to RIMS proceedings. Thanks !

\end{document}